%&latex

%%
%% This is file `elsarticle-template-num.tex',
%% generated with the docstrip utility.
%%

\documentclass[11pt]{amsart}
%\usepackage{mathbbold}
%%%%%%%%%%%%%%%%%%%%%%%%%%%%%%%%%%%%%%%%%%%%%%%%%%%%%%%%%%%%%%%%%%%%%%%%%%%%%%%%%%%%%%%%%%%%%%%%%%%%%%%%%%%%%%%%%%%%%%%%%%%%
%\usepackage{amsmath, amscd, amssymb}
\usepackage{latexsym, amssymb, amsmath, amscd, amsfonts}
\usepackage[mathscr]{eucal}
\usepackage{mathrsfs}

\usepackage{color}
\usepackage{euler,textcomp}
\usepackage[T1]{fontenc}
\usepackage{graphicx}
\usepackage{geometry}
\usepackage{soul}
\usepackage{youngtab}

%% The amsthm package provides extended theorem environments
%% \usepackage{amsthm}

%% The lineno packages adds line numbers. Start line numbering with
%% \begin{linenumbers}, end it with \end{linenumbers}. Or switch it on
%% for the whole article with \linenumbers.
%% \usepackage{lineno}

%\journal{Advances in Mathematics}

% THEOREMS -------------------------------------------------------
\newtheorem{theorem}{Theorem}[section]
\newtheorem*{acknowledgement}{Acknowledgement}

\newtheorem{corollary}[theorem]{Corollary}

\newtheorem{definition}[theorem]{Definition}

\newtheorem{lemma}[theorem]{Lemma}

\newtheorem{proposition}[theorem]{Proposition}
\newtheorem{example}[theorem]{Example}
\newtheorem{remark}[theorem]{Remark}

\setlength{\textheight}{8.6in} \setlength{\textwidth}{35pc}
\setlength{\topmargin}{-.1in} \setlength{\oddsidemargin}{.25in}
\setlength{\evensidemargin}{\oddsidemargin}
%\theoremstyle{definition}
%\theoremstyle{remark}
%\numberwithin{equation}{section}
%\linespread{1.3}

%\textwidth=125mm
%\textheight=185mm
%\parindent=8mm
%\evensidemargin=0pt
%\oddsidemargin=0pt
%\frenchspacing

% MATH -----------------------------------------------------------

\newcommand{\M}{\mathcal{M}}

\newcommand{\OO}{\mathcal{O}}
\newcommand{\PP}{\mathcal{P}}
\newcommand{\ff}{\mathbb{F}}
\newcommand{\C}{\mathbb{C}}
\newcommand{\Z}{\mathbb{Z}}
\newcommand{\FF}{\mathcal{F}}
\newcommand{\g}{\mathfrak{g}}
\newcommand{\Sy}{\mathfrak{S}}

% ----------------------------------------------------------------
\input xy
\xyoption{all}

\usepackage[normalem]{ulem}

\begin{document}

%\begin{frontmatter}

%% Title, authors and addresses

%% use the tnoteref command within \title for footnotes;
%% use the tnotetext command for theassociated footnote;
%% use the fnref command within \author or \address for footnotes;
%% use the fntext command for theassociated footnote;
%% use the corref command within \author for corresponding author footnotes;
%% use the cortext command for theassociated footnote;
%% use the ead command for the email address,
%% and the form \ead[url] for the home page:
%% \title{Title\tnoteref{label1}}
%% \tnotetext[label1]{}
%% \author{Name\corref{cor1}\fnref{label2}}
%% \ead{email address}
%% \ead[url]{home page}
%% \fntext[label2]{}
%% \cortext[cor1]{}
%% \address{Address\fnref{label3}}
%% \fntext[label3]{}

\title[Categorification of Fock space]{polynomial representations and Categorifications of Fock Space}

\author{Jiuzu Hong}
\email{hjzzjh@gmail.com}
\address{School of Mathematical Sciences, Tel Aviv University, Tel Aviv
69978, Israel}
\author{Oded Yacobi}
\email{oyacobi@math.toronto.edu}
\address{Department of Mathematics, University of Toronto, Bahen Centre, 40 St. George St., Toronto, Ontario, Canada, M5S 2E4}
\subjclass[2010]{18D05, 18F30, 20C20, 20C30}

%\email{oyacobi@math.tau.ac.il}
%\address{School of Mathematical Sciences, Tel Aviv University, Tel Aviv
%69978, Israel}
%\subjclass[2000]{20G05, 05E15}
%% use optional labels to link authors explicitly to addresses:
%% \author[label1,label2]{}
%% \address[label1]{}
%% \address[label2]{}

%\author{Oded Yacobi}

%\address{School of Mathematical Sciences, Tel Aviv University, Ramat Aviv, %Tel Aviv 69978, ISRAEL, oyacobi@post.tau.ac.il}

\begin{abstract}
The rings of symmetric polynomials form an inverse system whose  limit, the ring of symmetric functions, is the  model for the bosonic Fock space representation of the affine Lie algebra.  We categorify this construction by considering an inverse limit of categories of polynomial representation of general linear groups.  We show that this limit naturally carries an action of the affine Lie algebra (in the sense of Rouquier), thereby obtaining a famiy of categorifications of the bosonic Fock space representation.  \end{abstract}

%\subjclass[2000]{20G05, 05E10}
%\keywords{symplectic group, branching
%algebra}

\maketitle

%\tableofcontents

\section{Introduction \label{sectionone}}
A basic object at the intersection of representation theory and algebraic combinatorics is the ring of symmetric functions in infinitely many variables.  This ring is constructed via the  limit of the inverse system
 $$
 \cdots \rightarrow B_n \rightarrow B_{n-1} \rightarrow \cdots
 $$
where $B_n=\Z[x_1,...,x_n]^{\Sy_n}$ is the symmetric polynomials in $n$ indeterminates, and $B_n\to B_{n-1}$ is the map obtained by setting $x_n=0$.  The ring $B$ of symmetric functions is then defined as the subring of $\varprojlim B_n$ consisting of elements of finite degree.

The algebra $B$ possesses a striking array of symmetries, which are related to many classical structures.  In this paper we focus on an affine Lie algebra action which realizes $B$ as the (bosonic) Fock space representation of $\widehat{\mathfrak{sl}}_n$.  Our present purpose is to categorify the limit construction of $B$ along with the action of the affine Lie algebra.

To describe this idea in more detail, fix an algebraically closed field $\mathbb{F}$ of characteristic $p
\geq 0$ and let $\g$ be the complex Kac-Moody algebra
$\hat{\mathfrak{sl}}_p$, or $\mathfrak{sl}_{\infty}$ when $p=0$.  Consider the category $\M_n$ of polynomial representations of $GL_n(\ff)$.  It is well known that $\M_n$ categorifies $B_n$, i.e. the representation ring of $\M_n$ is precisely the symmetric polynomials in $n$ variables.  Moreover, the categories $\M_n$ form a direct system
$$
 \cdots \rightarrow \M_n \rightarrow \M_{n-1} \rightarrow \cdots
 $$
 where $\M_n\to\M_{n-1}$ is the functor $V\mapsto V^{GL_1}$, i.e. the invariants with respect to the $GL_1$ in the lower right-hand corner commuting with the standard $GL_{n-1} \subset GL_n$.    (This functor categorifies the map $B_n \to B_{n-1}$.)

From this direct system we form a category $\varprojlim \M_n$, and by imposing natural finite-ness conditions we define a subcategory $\M \subset \varprojlim \M_n$  (Section 3).  $\M$ is a tensor category, whose Grothendieck group is naturally isomorphic to the ring $B$.  Our main result is that there is an action of $\g$ on $\M$ (in the sense of Rouquier) which categorifies the Fock space representation of $B$ (Theorem \ref{CatTheorem}).
This means that besides defining a family of
endofunctors on $\M$ which give an integrable representation of $\g$
on the Grothendick group $K(\M)$, we also describe the additional
data of  a degenerate affine Hecke algebra action on a certain sum
of these functors (see Section 4 for a precise definition).

As a consequence of Chuang-Rouquier theory we
obtain derived equivalences between  certain blocks of $\M$ (Corollary \ref{Derived_Equivalence}).  Using results of Brundan and Kleshchev we recover the crystal of the Fock space
naturally from our construction (Corollary \ref{Crystal}).  The vertices of the corresponding
crystal graph are the simple objects in $\M$.

The construction of a ``strong'' categorification on a  limit of categories is novel, and, we believe, interesting in its own right. Moreover it seems to be adaptable to other settings, such as representations of quantum groups. Some of the other constructions appearing in this paper have their origin in the earlier works on categorification and representation theory, such as \cite{BFK}.  Our methods are also influenced by the work of Chuang and Rouquier \cite{CR}.  They studied  $\mathfrak{sl}_2$-categorifications on $Rep(GL_n)$, the category of \emph{rational} representations of $GL_n$.  Our work can be viewed as a sort of limit of their construction, which allows us to obtain the Fock space representation, rather than exterior powers of the standard represenation.

In sequels to this work, we develop this theory from the point of view of the strict polynomial functors of MacDonald \cite{Mac} and Friedlander-Suslin \cite{FS97}, and relate these results to Khovanov's Heisenberg categorification and Schur-Weyl duality \cite{HTY},\cite{HY}. Finally, we mention that a different  $\g$-categorification of the
Fock space was  constructed in \cite{PS} using the category
$\mathcal{O}$ for the double affine Hecke algebra.  Morevoer, Stroppel and Webster recently used cyclotomic $q$-Schur algebras to categorify higher level quantum Fock space \cite{SW}.

\begin{acknowledgement}
We are grateful to Joseph Bernstein for many enlightening
discussions throughout the course of this work.  We also thank Joel
Kamnitzer, Peter Tingley, and Jinkui Wan for helpful conversations,
and Nicholas Kuhn and Antoine Touz\'e for pointing out to us the
connection of our work to the category of strict polynomial functors
of finite degree.  Finally, we thank the anonymous referee for valuable comments which greatly enhanced the clarity of this paper.  \end{acknowledgement}

\section{Preliminaries}

In this section we set some notational conventions.  Section
\ref{Sec2.1} reviews some modular representation theory of $GL_n$
and introduces the functors $R_i$ which are used frequently
throughout this paper.  In Section \ref{Fock space} we recall the
definition of the Fock space representation of $\g$.

\subsection{The general linear group}
\label{Sec2.1} Let $\ff$ be an algebraically closed field of
characteristic $p \neq 2$.  (We exclude the $p=2$ case for ease of exposition; modulo some technical complications all of our constructions carry over to that case.) Let $\ff^\infty$ be the infinite dimensional vector space with distinguished basis $\{v_1,v_2,...\}$.   We fix the standard flag $\ff^{0} \subset
\ff^{1} \subset \ff^{2} \subset \cdots$  of  vector spaces, where
$\ff^n$ is the span of $\{v_1,...,v_n\}$.

Let $G_n$ denote the rank $n$ general linear group: $$G_{n}=
GL(\ff^{n}).$$  Set $T_n \subset G_n$ to be the maximal torus
consisting of diagonal matrices,  and let $B_n$ be the Borel
subgroup of upper triangular matrices and let $B_n^-$ be the opposite Borel subgroup.

We denote by $X(T_{n})$ the character group of $T_{n}$.  We identify
$X(T_{n})$ with $n$-tuples of integers: an $n$-tuple $(k_1,...,k_n)$ gives rise to the character $diag(t_1,...,t_n)\mapsto t_1^{k_1}\cdots t_n^{k_n}$. We assume the roots appear in $B_n$ to be positive. Let $X(T_{n})_+$ be the
cone of dominant weights with respect to the positive roots;  elements of $X(T_{n})_+$ are of the form
$\lambda=(\lambda_1 \geq \cdots \geq \lambda_n) $.  For $\lambda \in
X(T_n)_+$, we set $|\lambda|=\lambda_1+ \cdots +\lambda_n$.

Let $Rep(G_n)$ denote the category of rational representations of
$G_n$. A rational representation of $G_{n}$ is said to be \emph{polynomial}
if all its matrix coefficients can be extended to polynomials on
$M_{n}$.  Denote by $\M_n$ the category of finite dimensional polynomial
representations of $G_{n}$.

Let $Z_{n} \subset G_{n}$ be the center of $G_n$, which consists of
scalar matrices.  Given a representation $V_n$ of $G_{n}$, we can
decompose it into weight spaces with respect to the action of
$Z_{n}$:
$$
V_{n} = \bigoplus_{k \in \mathbb{Z}}V_{n}(k),
$$
where $$V_{n}(k)=\{ v \in V_{n} : z.v=z^{k}v \text{ for all } z \in
Z_{n} \}.$$ The representation $V_n$ is said to be of \emph{degree}
$k$ if $V_n=V_n(k)$.  If $V_n$ is of degree $k$ for some $k$, then
we say $V_n$ is \emph{homogeneous}.  A polynomial representation of
$G_n$ is a direct sum of homogeneous representations of non-negative
degrees, and so, in particular, simple modules are homogeneous. Let
$\M_n(k)$ be the category of polynomial representations of $G_n$ of
degree $k$.

For all $n \geq 1$, we embed $G_{n-1} \subset G_{n}$ as the
automorphisms fixing $v_n$. Given $V\in Rep(G_{n})$, we denote its
restriction to $G_{n-1}$ by $V|_{G_{n-1}}$.  Let $$R:\M_n
\rightarrow \M_{n-1}$$ be the functor assigning to a representation
$V$ of $G_n$ its restriction to $G_{n-1}$: $$R(V)=V|_{G_{n-1}}.$$

Let $V$ be a representation of $G_n$.  There is an action of $G_1$
on $R(V)$, coming from the automorphisms which fix all vectors $v_i$
except $v_n$. This copy of $G_1$ commutes with $G_{n-1}\subset G_n$, i.e. $G_{n-1}\times G_1 \subset G_n$.  Since $G_1$ acts semi-simply, we can decompose the
functor
\begin{equation}
\label{Def of R_i} R=\bigoplus_{i \in \mathbb{Z}} R_{i},
\end{equation}
corresponding to the weight spaces of $G_1$.  In particular,
$R_{0}(V)=R(V)^{G_1}$, the $G_1$-invariant vectors.

  For $\lambda \in X(T_{n})$ we set
$$
H^0(\lambda)=ind_{B_n}^{G_n}(\ff_{\lambda}),
$$
where $\ff_{\lambda}$ is the one-dimensional $B_n^-$-module of weight
$\lambda$.  For
$\lambda \in X(T_{n})_+$ set
 $$
 L_n(\lambda)=socH^0(\lambda),
 $$
 the maximal semi-simple submodule of $H^0(\lambda)$.

Note that $L_n(\lambda)$ is a polynomial representation if and only if $\lambda_i\geq0$ for all $i$, where $\lambda=(\lambda_1,...,\lambda_n)$.  In general, the representation $L_n(\lambda)$ is a simple module of highest weight $\lambda$; this implies $L_n(\lambda) \in \M_n(|\lambda|)$ when $\lambda$ is a highest weight of a polynomial representation.  Moreover,
\begin{proposition}[\cite{J}, Corollay II.2.7]
\label{SimplesforGL} The representations $L_{n}(\lambda)$ with
$\lambda \in X(T_n)_+$ are a system of representatives for the
isomorphism classes of simple rational $G_n$-modules.

\end{proposition}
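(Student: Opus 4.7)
The plan is to follow the classical highest weight classification for simple rational modules of a connected reductive group. Let $V$ be an arbitrary simple rational $G_n$-module; the goal is to produce a unique $\lambda \in X(T_n)_+$ with $V \cong L_n(\lambda)$, and to verify that distinct dominant weights yield non-isomorphic modules.

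First I would extract a highest weight of $V$ by applying the Borel fixed point theorem to the projective variety $\mathbb{P}(V)$, on which $B_n$ acts rationally: this produces a $B_n$-stable line, and hence a nonzero vector $v \in V$ annihilated by the unipotent radical of $B_n$ and carrying a $T_n$-character $\lambda$. Dominance of $\lambda$ follows by restricting $V$ to the root $SL_2$ attached to each simple root $\alpha_i$: the cyclic submodule generated by $v$ is a finite-dimensional rational $SL_2$-representation, and classical $\mathfrak{sl}_2$-theory forces $\langle \lambda, \alpha_i^\vee \rangle \geq 0$.

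Next, Frobenius reciprocity produces a nonzero $G_n$-homomorphism $V \to H^0(\lambda)$ from the $B_n^-$-equivariant projection of $V$ onto its $\lambda$-weight line; simplicity of $V$ forces this map to be injective, and its image lies in $\mathrm{soc}\,H^0(\lambda) = L_n(\lambda)$. Granting that $L_n(\lambda)$ is itself simple, we conclude $V \cong L_n(\lambda)$. Distinctness is then automatic since $L_n(\lambda)$ has $\lambda$ as its unique highest weight, so the assignment $\lambda \mapsto L_n(\lambda)$ is injective on isomorphism classes.

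The main obstacle is establishing the simplicity (as opposed to mere semisimplicity) of $L_n(\lambda)$. This reduces to showing that $(H^0(\lambda))^{B_n}$ is one-dimensional, so that every simple summand of the socle must contain this unique highest weight line, forcing the socle to be indecomposable and hence simple. In positive characteristic this one-dimensionality is delicate, and is typically proved by realizing $H^0(\lambda)$ as global sections of an equivariant line bundle on the flag variety $G_n/B_n^-$ and combining this with Kempf's vanishing theorem, as in \cite{J}.
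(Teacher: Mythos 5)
The paper offers no proof of this statement: it is quoted directly from Jantzen \cite{J}, Corollary II.2.7, and your sketch is essentially the standard argument given there (Borel fixed point to produce a $B_n$-stable line, Frobenius reciprocity to embed the simple module into $H^0(\lambda)$, simplicity of $\mathrm{soc}\,H^0(\lambda)$), so in substance you are reconstructing exactly the proof the paper is citing. One correction to your last paragraph: the key point is not delicate and does not use Kempf's vanishing theorem (which concerns higher cohomology of the line bundle and is irrelevant here). What one needs is that the space of highest-weight vectors, i.e.\ the invariants of $H^0(\lambda)$ under the unipotent radical $U$ of the appropriate Borel (not the full $B_n$-invariants, as you wrote), is one-dimensional; this follows elementarily from density of the big cell $U\cdot B$ in $G_n$, since a $U$-semiinvariant function in $\mathrm{ind}_{B}^{G_n}\ff_\lambda$ is determined by its value at the identity. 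Combined with the fact that any nonzero rational module over the unipotent group $U$ has nonzero $U$-fixed vectors, every nonzero submodule of $H^0(\lambda)$ contains the unique highest-weight line, which gives simplicity of the socle without any cohomological input.
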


Recall that the Weyl group of $G_n$ is isomorphic to
$\mathfrak{S}_n$, the symmetric group on $n$ letters, which acts on
$X(T_{n})$ by place permutations.  Let $w_o$ be the longest element
of the Weyl group.

\begin{definition}
For $\lambda \in X(T_n)_+$ the \emph{Weyl module} $V_{n}(\lambda)
\in \M_n(|\lambda|)$ of highest weight $\lambda$ is given by
$$
V_{n}(\lambda)=H^0(-w_0\lambda)^*.
$$
\end{definition}

The $G_n$-module $V_{n}(\lambda)$ is generated by a $B_n$-stable
line of weight $\lambda$.  Moreover, $V_{n}(\lambda)$ is universal
with respect to this property, i.e. any $G_n$-module generated by a
$B_n$-stable line of weight $\lambda$ is a homomorphic image of
$V_{n}(\lambda)$.  In particular,
\begin{equation}
\label{Eq2} V_{n}(\lambda)/rad_GV_{n}(\lambda) \cong L_{n}(\lambda).
\end{equation}

\begin{definition}[\cite{J}, \S II.4.19]
An ascending chain $0=V_0 \subset V_1 \subset \cdots $ of submodules
of a $G_n$-module $V \in Rep(G_n)$ is called a \emph{Weyl
filtration} if $V=\bigcup V_i$ and each factor $V_i/V_{i-1}$ is
isomorphic to some Weyl module

\end{definition}

Let  $\g_{n}$ be the Lie algebra of $G_n$; this is the Lie algebra of $n\times n$ matrices over $\ff$.  The matrix units are denoted by $x_{ij}$.  Let $U(\g_{n})$ be the universal
enveloping algebra of $\g_n$ with center $Z(\g_n)$.  The Casimir operator of $U(\g_n)$ is
\begin{equation}
\label{Casimir}
C_n=\sum_{i\neq j} x_{ij}x_{ji} + \sum_{\ell=1} ^{n} x_{\ell\ell} ^2 \in Z(\g_n).
\end{equation}
For $\lambda=(\lambda_1,...,\lambda_n) \in X(T_{n})$, define the
scalar
\begin{equation}
\label{CasScalar} c_n(\lambda)=\sum_{i=1}^n
(n-2i+1)\lambda_i+\lambda_i ^2.
\end{equation}
  It is an elementary computation that on any module $V$ which is generated by a vector of weight $\lambda$, $C_n$ acts by the scalar $c_n(\lambda)$.  In particular,

\begin{lemma} \label{SCALAR}
For $\lambda \in X(T)_+$, the Casimir operator $C_n$ acts on
$V_{n}(\lambda)$ and $L_{n}(\lambda)$ by the scalar $c_n(\lambda)$.
\end{lemma}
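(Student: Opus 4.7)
The plan is to invoke directly the general fact asserted in the paragraph preceding the lemma: on any module generated by a weight-$\lambda$ vector, the central element $C_n$ acts by the scalar $c_n(\lambda)$. The argument behind that fact is the usual one: if $v$ is a weight-$\lambda$ cyclic generator of $V$, then a direct computation using the formula \eqref{Casimir} (pushing the $x_{ij}$ with $i<j$ to the right of $x_{ji}$ and using that $v$ is a highest weight vector with respect to an appropriate triangular decomposition) gives $C_n v = c_n(\lambda) v$; since $C_n \in Z(\g_n)$, the subspace on which $C_n$ acts by $c_n(\lambda)$ is a submodule, and cyclicity forces it to be all of $V$. So the remaining task is just to exhibit, for each of $V_n(\lambda)$ and $L_n(\lambda)$, a weight-$\lambda$ cyclic generator.

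For $V_n(\lambda)$ this is immediate: the paper records explicitly (in the paragraph following Definition of the Weyl module) that $V_n(\lambda)$ is generated by a $B_n$-stable line of weight $\lambda$. Any nonzero vector on that line is therefore a cyclic weight-$\lambda$ generator, and the general principle yields $C_n|_{V_n(\lambda)} = c_n(\lambda)\cdot \mathrm{id}$.

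For $L_n(\lambda)$ I would give two equivalent arguments. The first observes that by \eqref{Eq2}, $L_n(\lambda)$ is a quotient of $V_n(\lambda)$; since $C_n$ is an endomorphism of $V_n(\lambda)$ equal to the scalar $c_n(\lambda)$, and scalar endomorphisms descend through quotient maps, the induced action of $C_n$ on $L_n(\lambda)$ is again the scalar $c_n(\lambda)$. The second argument simply notes that $L_n(\lambda)$ is a simple module of highest weight $\lambda$, hence in particular cyclic on a vector of weight $\lambda$, so the general principle applies directly.

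Because the key computational input is stated as given in the excerpt, there is essentially no obstacle here; the lemma is a direct packaging of the centrality of $C_n$ with the cyclic-generation properties of $V_n(\lambda)$ and $L_n(\lambda)$. The one conceptual point worth keeping in mind (especially since $\mathrm{char}(\ff) = p$ may be positive) is that one must interpret ``generated by a vector of weight $\lambda$'' in a sense compatible with the $U(\g_n)$-action used to define $C_n$; but for $V_n(\lambda)$ the $B_n$-stable line consists of $\g_n$-highest-weight vectors, and for $L_n(\lambda)$ the quotient argument bypasses the issue entirely, so nothing further is required.
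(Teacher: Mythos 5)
Your proof is correct and matches the paper's own (unstated but implicit) argument: the paper states the lemma as an immediate consequence, via ``In particular,'' of the general fact that $C_n$ acts by $c_n(\lambda)$ on any module generated by a vector of weight $\lambda$, which applies to $V_n(\lambda)$ (generated by its $B_n$-stable line) and to $L_n(\lambda)$ (either directly as a cyclic highest-weight module, or as a quotient of $V_n(\lambda)$ by \eqref{Eq2}). Your unpacking of the computation and the two alternative arguments for $L_n(\lambda)$ are both accurate and consistent with the paper's conventions.
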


\subsection{Kac-Moody algebras in type A}
\label{Fock space}
 Let $\mathfrak{g}$ denote the following Kac-Moody algebra (over $\C$):
\begin{equation*}
\mathfrak{g}= \left\{
\begin{array}{lr}
\mathfrak{sl}_{\infty} \text{   if } p=0  &  \\
\widehat{\mathfrak{sl}}_{p} \text{  if }  p>0
\end{array}
\right.
\end{equation*}
By definition, the Kac-Moody algebra $\mathfrak{sl}_{\infty}$ is
associated to the Dynkin diagram:
$$
\xymatrix{ \cdots \ar@{-}[r] & \bullet \ar@{-}[r] & \bullet
\ar@{-}[r] & \bullet\ar@{-}[r] & \bullet\ar@{-}[r] & \cdots}
$$
The Kac-Moody algebra $\widehat{\mathfrak{sl}}_{p}$ is associated to
the diagram with $p$ nodes: \linebreak \linebreak
$$
\xymatrix{ \bullet \ar@{-}@/^2pc/[rrrr] \ar@{-}[r] & \bullet
\ar@{-}[r] & \cdots \ar@{-}[r] & \bullet\ar@{-}[r] & \bullet }
$$
For the  precise relations defining $\g$ see \cite{Ka}. The Lie
algebra $\g$ has standard Chevalley generators $\{e_i,f_i\}_{i \in
\Z /p\Z}$. Here, and throughout, we identify $\Z/p\Z$ with the prime
subfield of $\ff$.

Set $h_i=[e_i,f_i]$.  We let $Q$ denote the root lattice and $P$ the
weight lattice of $\g$.  The cone of dominant weights is denoted
$P_+$ with generators the fundamental weights $\{\varpi_i:i \in
\Z/p\Z\}$.  Let $\mathcal{L}(\omega)$ be the irreducible $\g$-module
of highest weight $\omega \in P_+$.  In particular, the \emph{basic
representation} is $\mathcal{L}(\varpi_0)$.

Of central importance to us is the \emph{Fock space} representation
of $\g$, which we now define.  Let $\Lambda$ be the set of all
partitions.   As a vector space $\mathcal{F}$ has basis indexed by
partitions:
$$
\mathcal{F}=\bigoplus_{\lambda \in \Lambda}\C v_{\lambda}.
$$
We identify partitions with their Young diagram (using English
notation).  For example the partition $(4,4,2,1)$ corresponds to the
diagram
 $$
 \yng(4,4,2,1)
 $$
The \emph{content} of a box in position $(k,l)$ is the integer $l-k
\in \Z/p\Z$.  Given $\mu,\lambda \in \Lambda$, we write
\begin{equation*}
\xymatrix{ \mu \ar[r] & \lambda}
\end{equation*}
if $\lambda$ can be obtained from $\mu$ by adding some box.  If the
arrow is labelled $i$ then $\lambda$ is obtained from $\mu$ by
adding a box of content $i$ (an $i$-box, for short).  For instance,
if $\mu=(2)$, $\lambda=(2,1)$, and $m=3$ then
$$
\xymatrix{
 \mu \ar[r]^{2} & \lambda}.
$$
An $i$-box of $\lambda$ is \emph{addable} (resp. \emph{removable})
if it can be added to (resp. removed from) $\lambda$ to obtain
another partition. Let $m_i(\lambda)$ denote the number of $i$-boxes
of $\lambda$. Define
\begin{equation}
\label{milambda}
n_i(\lambda)=m_{i-1}(\lambda)+m_{i+1}(\lambda)-2m_{i}(\lambda)+\delta_{i0},
\end{equation}
where $\delta_{i0}$ is the Kronecker delta.

%We let $a_i(\lambda)$ (resp. $r_i(\lambda)$) denote the number of addable %(resp. removable) $i$-boxes of $\lambda$, and set
%$$
%n_i(\lambda)=a_i(\lambda)-r_i(\lambda)
%$$

The action of $\g$ on $\mathcal{F}$ is given on Chevalley generators
by the following formulas:
$$
e_i.v_{\lambda}=\sum v_{\mu}
$$
the sum over all $\mu$ such that $\xymatrix{ \mu \ar[r]^{i} &
\lambda}$, and
 $$
f_i.v_{\lambda}=\sum v_{\mu}
$$
the sum over all $\mu$ such that $\xymatrix{ \lambda \ar[r]^{i} &
\mu}$.   These equations define an integral  representation of
$\mathfrak{g}$ (see e.g. \cite{LLT}).

Note that $v_{\emptyset}$ is a highest weight vector of highest
weight $\omega_0$. We note also that the standard basis of $\FF$ is
a weight basis,  any weight vector $v_\lambda$ is of weight
\begin{equation}
\label{weight} {\omega_0-\sum_i m_i \alpha_i.}
\end{equation}
Indeed, from the above formulas one obtains:
$$h_i.v_{\lambda}=n_i(\lambda)v_{\lambda}.$$

%It will also be convenient for us to define the operators
%\begin{equation}
%\label{Eqe}
%e=\sum_{i \in \Z /p\Z} e_i
%\end{equation}
%and
%\begin{equation}
%\label{Eqf}
%f=\sum_{i \in \Z /p\Z} f_i
%\end{equation}

\section{The category $\M$ }

In this section we define our main object of study, the category
$\M$.

\subsection{The stable category}\label{Sec3.2}
In this section we describe a stable category constructed from
polynomial representations of general linear groups. Recall the
functor $R_0:\M_{n+1} \rightarrow \M_{n}$, which is defined by
taking the invariants with respect to $G_1$:
$$R_0(V_{n+1})=V_{n+1}^{G_1}.$$ Since $R_0$ preserves degree, it
also defines a functor $R_0:\M_{n+1}(k) \rightarrow \M_n(k)$.

\begin{proposition}[Theorem 4.3.6, \cite{M93}]
\label{R_0Equivalence} For $n \geq k$, $R_0:\M_{n+1}(k) \rightarrow
\M_n(k)$ is an equivalence of categories.
\end{proposition}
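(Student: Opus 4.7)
The plan is to reduce the statement to the classical Morita equivalence between the Schur algebra and the group algebra of the symmetric group, which holds precisely in the stable range $n\geq k$, and then exploit a commutative triangle of functors.

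First, I would invoke Schur--Weyl duality to identify $\M_n(k)$ with the category of modules over the Schur algebra $S(n,k):=\mathrm{End}_{\ff\Sy_k}((\ff^n)^{\otimes k})$, and similarly $\M_{n+1}(k)\simeq S(n+1,k)\text{-mod}$. Let $\xi_n\in S(n,k)$ denote the idempotent projecting onto the weight space of $(\ff^n)^{\otimes k}$ of weight $(1^k,0^{n-k})$, and consider the associated Schur functor
$$F_n:\M_n(k)\to \ff\Sy_k\text{-mod},\qquad V\mapsto V_{(1^k,0^{n-k})},$$
which at the level of Schur-algebra modules is $M\mapsto \xi_n M$. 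The classical theorem (discussed for instance in Chapter 6 of Green's book, and in the reference [M93] cited in the statement) asserts that whenever $n\geq k$ the idempotent $\xi_n$ is full---concretely, $\xi_n S(n,k)\xi_n\cong \ff\Sy_k$ and $S(n,k)\xi_n$ is a projective generator---so that $F_n$ is an equivalence of categories.

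Second, I would verify the triangle identity $F_n\circ R_0 = F_{n+1}$. This is a direct weight-space computation. For $V\in \M_{n+1}(k)$, the space $R_0(V)=V^{G_1}$ is, by definition of $G_1$ as the torus acting on the last coordinate $v_{n+1}$, the sum of those $T_{n+1}$-weight spaces $V_\mu\subset V$ with $\mu_{n+1}=0$. Applying $F_n$ then further restricts to those $\mu$ whose first $n$ coordinates are $(1^k,0^{n-k})$; combined with $\mu_{n+1}=0$ this picks out exactly the $T_{n+1}$-weight space of $V$ of weight $(1^k,0^{n+1-k})$, which is $F_{n+1}(V)$.

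Finally, since $n\geq k$ implies $n+1\geq k$, both $F_n$ and $F_{n+1}$ are equivalences of categories. The triangle identity, combined with the two-out-of-three property for equivalences, immediately yields that $R_0:\M_{n+1}(k)\to \M_n(k)$ is itself an equivalence. The entire content of the proof is concentrated in the classical Morita equivalence between $S(n,k)$ and $\ff\Sy_k$ in the stable range; beyond citing that, the remaining verification is a triviality about weights, so I do not anticipate any real obstacle.
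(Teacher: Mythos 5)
The paper itself offers no proof of this proposition: it is quoted verbatim from Martin's book (Theorem 4.3.6 of \cite{M93}). Your general strategy -- pass to Schur algebras and argue via a full idempotent -- is exactly the right one, but the specific ``classical theorem'' you rest everything on is false in precisely the setting of this paper. For $n\geq k$ one does have $\xi_n S(n,k)\xi_n\cong \ff\Sy_k$, but the idempotent $\xi_n$ is full, and the Schur functor $V\mapsto V_{(1^k,0^{n-k})}$ an equivalence, only when $p=0$ or $p>k$. In the modular range $0<p\leq k$ the Schur functor annihilates the simples $L_n(\lambda)$ with $\lambda$ not $p$-restricted: the simple $\ff\Sy_k$-modules are indexed by $p$-regular (equivalently, after transposing, $p$-restricted) partitions only, a strictly smaller set than the partitions of $k$, so $S(n,k)\xi_nS(n,k)\neq S(n,k)$; alternatively, $S(n,k)$ is quasi-hereditary of finite global dimension while $\ff\Sy_k$ is self-injective and not semisimple for $p\leq k$, hence of infinite global dimension, so the two algebras cannot be Morita equivalent. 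Since the paper works over an algebraically closed field of arbitrary characteristic $p\neq 2$ (and small $p$ is the case of interest for the $\widehat{\mathfrak{sl}}_p$-action), your functors $F_n$ and $F_{n+1}$ are not equivalences there, and the two-out-of-three argument collapses.

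The argument is repaired by comparing $S(n+1,k)$ and $S(n,k)$ directly instead of routing through $\ff\Sy_k$. Let $e\in S(n+1,k)$ be the sum of the weight idempotents $\xi_\mu$ over compositions $\mu$ of $k$ into $n+1$ parts with $\mu_{n+1}=0$. Then $eS(n+1,k)e\cong S(n,k)$, and under Schur--Weyl duality the functor $R_0$ is exactly $M\mapsto eM$ -- this is the same weight-space computation you carried out for your triangle identity, since $V^{G_1}$ is the sum of the $T_{n+1}$-weight spaces with $\mu_{n+1}=0$. For a finite-dimensional algebra $A$ with idempotent $e$, the functor $M\mapsto eM$ from $A$-modules to $eAe$-modules is an equivalence if and only if $eL\neq 0$ for every simple $L$ (equivalently $AeA=A$). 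The simples of $S(n+1,k)$ are the $L_{n+1}(\lambda)$ with $\lambda$ a partition of $k$ into at most $n+1$ parts; any such $\lambda$ has at most $k\leq n$ nonzero parts, so its highest-weight space already has last coordinate zero and survives $e$. Hence $R_0$ is an equivalence; this is the actual content of Martin's Theorem 4.3.6 that the paper cites, and it works uniformly in all characteristics, unlike the Morita equivalence with the symmetric group.
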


We have a diagram of functors:
$$
\cdots \overset{R_{0}}{\longrightarrow }
 \M_2 \overset{R_{0}}{\longrightarrow }
 \M_1 \overset{R_{0}}{\longrightarrow }
 \M_0.
$$
The category $\M$ will be constructed by first defining the limit
category of this diagram, and then taking the subcategory of compact
objects.

\begin{definition}
Let $\widetilde{\M}$ be the category whose objects are sequences
$$V=(V_{n},\alpha_{n})_{n=0}^{\infty}$$ where $V_{n} \in \M_n$ and
$$\alpha_{n}:R_{0}(V_{n+1}) \rightarrow V_{n}$$ is an isomorphism of
$G_{n}$-modules (by convention $G_0$ is the trivial group).

For objects  $ V=(V_{n},\alpha_{n})_{n=0}^{\infty}$ and
$W=(W_{n},\beta_{n})_{n=0}^{\infty}$ in $\widetilde{\M}$, the space
of morphisms $Hom_{\widetilde{\M}}(V,W)$ is defined as the limit
$$\underleftarrow{lim}Hom_{G_n}(V_{n},W_{n}).$$  Here we are using the maps
$$
\theta^{n+1}_{n}:Hom_{G_{n+1}}(V_{n+1},W_{n+1}) \rightarrow
Hom_{G_n}(V_{n},W_{n})
$$
given by $f_{n+1} \mapsto \beta_{n}\circ R_{0}(f_{n+1}) \circ
\alpha_{n}^{-1}$.

\end{definition}

By definition, a morphism $f \in Hom_{\widetilde{\M}}(V,W)$ is a
sequence $(f_n)_{n=0}^{\infty}$, where $$f_{n} \in
Hom_{G_n}(V_{n},W_{n})$$ and the following diagram commutes
$$
\xymatrix{
R_{0}(V_{n+1}) \ar[r]^{R_{0}(f_{n+1})} \ar[d]^{\alpha_{n}} & R_{0}(W_{n+1}) \ar[d]^{\beta_{n}} \\
V_{n} \ar[r]^{f_{n}} & W_{n} }
$$
for $n \gg 0$.  Two morphisms $f,g:V \rightarrow W$ are equal if for
$n \gg 0$, $f_{n}=g_{n}$.

When there is no cause for confusion, we abbreviate an object
$(V_n,\alpha_n)_{n=0}^{\infty}$ in $\widetilde{\M}$ by
$(V_n,\alpha_n)$, or sometimes simply $(V_n)$ when the maps
$\alpha_n$ are understood from context.  Similarly a morphism
$(f_n)_{n=0}^{\infty}$ will be abbreviated as $(f_n)$.

If $V_n,W_n \in \M_n$, then canonically $R_0(V_n \otimes W_n) \cong
R_0(V_n)\otimes R_0(W_n)$.  Therefore there is a monoidal structure
on $\widetilde{\M}$ described as follows: for $V=(V_n,\alpha_n)$ and
$W=(W_n,\beta_n)$ objects in $\widetilde{\M}$, set $V \otimes W
=(V_n \otimes W_n, \alpha_n \otimes \beta_n)$. The following lemma
is straight-forward.
\begin{lemma}
$\widetilde{\M}$ is a symmetric tensor category.
\end{lemma}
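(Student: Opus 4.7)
The plan is to transport the symmetric monoidal structure from each $\M_n$ to $\widetilde{\M}$ componentwise, checking that all the structure morphisms are compatible with the transition isomorphisms $\alpha_n$. The tensor product on objects is already given in the text as $V \otimes W = (V_n \otimes W_n, \alpha_n \otimes \beta_n)$; on morphisms $(f_n) \otimes (g_n) := (f_n \otimes g_n)$. For this to make sense I first need to verify that the canonical isomorphism $R_0(V_n \otimes W_n) \cong R_0(V_n) \otimes R_0(W_n)$ is natural in both arguments and commutes with the relevant diagrams, so that $\alpha_n \otimes \beta_n$ is a well-defined isomorphism $R_0(V_{n+1} \otimes W_{n+1}) \to V_n \otimes W_n$; this is immediate from the fact that $G_1$ acts semisimply and its invariants commute with tensor products of $G_1$-representations.

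The unit object I would take to be $\mathbf{1} = (\mathbb{F}, \mathrm{id})$, where $\mathbb{F}$ is the trivial $G_n$-module in each degree, noting $R_0(\mathbb{F}) = \mathbb{F}$ canonically. The associator, left/right unitors, and braiding on $\widetilde{\M}$ are defined levelwise from the corresponding structure morphisms in each $\M_n$: for objects $V = (V_n, \alpha_n)$, $W = (W_n, \beta_n)$, $Z = (Z_n, \gamma_n)$, set $a_{V,W,Z} = (a_{V_n, W_n, Z_n})$, $c_{V,W} = (c_{V_n, W_n})$, etc. To check each of these is a morphism in $\widetilde{\M}$, I have to verify that the required square with $R_0$ applied and the transition maps commutes for $n \gg 0$; this reduces to the naturality of the canonical isomorphism $R_0(- \otimes -) \cong R_0(-) \otimes R_0(-)$ with respect to the associator and braiding in $\M_n$, which itself follows because these are just the usual symmetric tensor isomorphisms on the underlying vector spaces and $R_0$ is taking $G_1$-invariants, a symmetric monoidal functor.

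Having defined all the structure morphisms, the pentagon, triangle, and hexagon axioms in $\widetilde{\M}$ reduce immediately to the corresponding axioms holding in each $\M_n$ individually, since everything is defined componentwise and equality of morphisms in $\widetilde{\M}$ is tested componentwise for $n \gg 0$. The unit isomorphism property $R_0(\mathbf{1} \otimes V) \cong R_0(V)$ is compatible with $\alpha_n$ for the same reason.

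The only mildly delicate point, which I would flag as the main thing to check rather than as an obstacle, is the naturality of the isomorphism $R_0(V \otimes W) \cong R_0(V) \otimes R_0(W)$ and its compatibility with the tensor products $\alpha_n \otimes \beta_n$ of the structure maps; once this bookkeeping is done, the result is formal. In particular, no new coherence data need be invented — everything is inherited from the symmetric tensor structures on the $\M_n$ via the monoidal functors $R_0$.
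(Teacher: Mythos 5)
Your proposal is correct and is essentially the argument the paper has in mind: the paper simply records the componentwise tensor structure $V\otimes W=(V_n\otimes W_n,\alpha_n\otimes\beta_n)$ and declares the lemma straightforward, and your levelwise transport of associators, unitors and braiding, with coherence checked for $n\gg 0$, is exactly that routine verification. One small caveat on your justification of the key isomorphism $R_0(V_n\otimes W_n)\cong R_0(V_n)\otimes R_0(W_n)$: semisimplicity of the $G_1$-action alone is not enough (for general rational $G_1$-modules, invariants do not commute with tensor product, since a positive weight can pair with a negative one); what makes $R_0$ monoidal here is that objects of $\M_n$ are polynomial, so all $G_1$-weights are nonnegative and the zero weight space of $V_n\otimes W_n$ is the tensor product of the zero weight spaces.
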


\begin{example}
\label{Ex of Objects} Here are some naturally occurring objects in
$\widetilde{\M}$.
\begin{enumerate}
\item Let $1_n$ be the trivial one dimensional representation of $G_n$.  Clearly, $R_0(1_{n+1}) \cong 1_n$, so the trivial representations glue together to the unit object $1=(1_n)$ in the symmetric tensor category $\widetilde{\M}$.

\item Consider the standard representation of $G_n$ on $\ff^n$.  Then $R_0(\ff^{n+1})$ is precisely the span of $v_1,...,v_n$, so using the obvious morphisms $\iota_n:R_0(\ff^{n+1}) \rightarrow \ff^n$, we obtain the ``standard'' object $St=(\ff^n, \iota_n) \in  \widetilde{\M}$.

\item Fix a nonnegative integer $r$, and consider the tensor product representation of $G_n$ on $\bigotimes^r \ff^n$.  Then, as above, there are obvious morphisms $R_0(\bigotimes^r \ff^{n+1}) \rightarrow \bigotimes^r \ff^n$, which glue together to an object which is canonically isomorphic to $\bigotimes^r St$.  One can similarly define objects $S^r(St)$ and $\bigwedge^rSt$.

\item In contrast to the tensor algebra and symmetric algebra of $\ff^n$, the exterior algebra is finite dimensional so $\bigwedge\ff^n \in \M_n$. Therefore, from isomorphisms $R_0(\bigwedge \ff^{n+1})
\rightarrow \bigwedge \ff^n$, we can define an object which we
denote $\bigwedge St =(\bigwedge \ff^n) \in \widetilde{\M}$.
\end{enumerate}
\end{example}

\begin{definition}
An object $V=(V_{n}) \in \widetilde{\M}$ is of \emph{degree} $k$ if
for every $n$, $V_{n}$ is of degree $k$.  If $V$ is of degree $k$
for some $k$, then we say $V$ is \emph{homogeneous}.  Let $\M(k)$
denote the subcategory of $\widetilde{\M}$ consisting of objects of
degree $k$.
\end{definition}

For an object $V=(V_n) \in \widetilde{\M}$, let $V(k)=(V_{n}(k))$.
We have the (possibly infinite) direct sum in $\widetilde{\M}$:
 \begin{equation}
 \label{compact}
 V=\bigoplus V(k).
 \end{equation}
 An object $V \in \widetilde{\M}$ is \emph{compact} if the direct sum above is finite.  This is equivalent to the usual notion of compactness from category theory, namely that $V$ commutes with coproducts.  For instance, in the example above (1)-(3) are compact objects, while $\bigwedge St$ is not.  We thus arrive at a different realization of the category of polynomial functors.

\begin{definition}
\label{DefCatP} The category $\M$ is the full subcategory of
$\widetilde{\M}$ consisting of compact objects, i.e. those objects
where the sum (\ref{compact}) is finite.
\end{definition}

Note that $\M$ is the direct sum of categories:
$$
\M=\bigoplus_{k=0}^{\infty}\M( k),
$$
and that $\M$ is also a tensor category. Let $\Psi_{n}$ be the
projection functor from $\M$ to $\M_n$, and let $\Psi_n(k)$ denote
its restriction to $\M(k)$. By Proposition \ref{R_0Equivalence},
we have:
\begin{proposition}
\label{Forgetful-Functor} For $n\geq k$, $\Psi_{n}(k):
\M(k)\rightarrow \M_n(k)$ is an equivalence of categories.
\end{proposition}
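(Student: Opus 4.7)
The plan is to exhibit $\Psi_n(k)$ as both fully faithful and essentially surjective, with essentially all the work done by Proposition \ref{R_0Equivalence}. Using that proposition, fix once and for all, for every $m \geq k$, a quasi-inverse $S_m : \M_m(k) \to \M_{m+1}(k)$ to $R_0$ together with a natural isomorphism $R_0 \circ S_m \cong \mathrm{id}_{\M_m(k)}$.

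For fully faithfulness, let $V = (V_m,\alpha_m)$ and $W = (W_m,\beta_m)$ lie in $\M(k)$. Since $V_m, W_m \in \M_m(k)$ and $R_0$ is an equivalence of these categories for $m \geq k$, the transition map
\[
\theta^{m+1}_m : \mathrm{Hom}_{G_{m+1}}(V_{m+1},W_{m+1}) \to \mathrm{Hom}_{G_m}(V_m,W_m),
\]
which sends $f_{m+1}$ to $\beta_m \circ R_0(f_{m+1}) \circ \alpha_m^{-1}$, is the composition of the bijection induced by $R_0$ on morphisms with the bijections given by pre- and post-composition with the isomorphisms $\alpha_m^{-1}$ and $\beta_m$. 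Hence $\theta^{m+1}_m$ is a bijection for every $m \geq k$, the inverse limit defining $\mathrm{Hom}_{\M(k)}(V,W)$ stabilizes, and projection onto the $n$-th coordinate yields $\mathrm{Hom}_{\M(k)}(V,W) \xrightarrow{\cong} \mathrm{Hom}_{G_n}(V_n,W_n)$ for every $n \geq k$.

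For essential surjectivity, given $W \in \M_n(k)$ I construct a quasi-inverse $\Phi_n(W) = (V_m,\alpha_m)$ as follows. Set $V_n = W$; for $m < n$ put $V_m = R_0^{n-m}(W)$ with $\alpha_m = \mathrm{id}$; for $m > n$ put $V_m = S_{m-1} \circ \cdots \circ S_n(W)$ with $\alpha_m$ the fixed coherence isomorphism $R_0 \circ S_m \cong \mathrm{id}$. Each $V_m$ lies in $\M_m(k)$ since $R_0$ and the $S_j$ preserve degree, so the homogeneous decomposition (\ref{compact}) collapses to the single summand of degree $k$; in particular $\Phi_n(W)$ is compact and lies in $\M(k)$. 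By construction $\Psi_n(\Phi_n(W)) = W$, and functoriality of $\Phi_n$ is immediate from that of $R_0$ and the $S_m$.

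No step is genuinely hard: once Proposition \ref{R_0Equivalence} is in hand, the content reduces to the observation that an inverse system whose transition maps are all bijections beyond some index stabilizes, and to choosing coherent extensions of any object of $\M_n(k)$ upward through the $S_m$. The only mild obstacle is organizing this bookkeeping so that $\Phi_n \circ \Psi_n \cong \mathrm{id}_{\M(k)}$ rigorously, but this follows from the fact that any two such extensions are connected by a canonical isomorphism arising inductively from the $R_0 \circ S_m \cong \mathrm{id}$ data.
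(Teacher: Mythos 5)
Your proof is correct and follows essentially the same route as the paper: both reduce the equivalence to Proposition~\ref{R_0Equivalence}, treating $\Psi_n(k)$ as the projection from a tower whose transition functors are all equivalences beyond index $k$. The paper's own proof is a one-line citation of Proposition~\ref{R_0Equivalence}; what you spell out with abstractly chosen quasi-inverses $S_m$, the paper later realizes canonically (Proposition~\ref{InvertoS}) via the polynomial-induction functor $I_0$ as the explicit quasi-inverse to $R_0$.
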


\section{Properties of $\M$}
In this section we undertake a thorough study of the category $\M$
in preparation for the categorification theorem we prove in the next
section. In Section \ref{SectionPind} we use polynomial induction
functors (\cite{BK2}) to construct an inverse functor to $R_0$. In
Section \ref{Sec3.3} we study some standard objects in $\M$.  In
Section \ref{SectionFunctors} we introduce the functors $E$ and $F$
and decompose them into subfunctors using endomorphisms $X \in
End(E)$ and $Y \in End(F)$.

\subsection{Polynomial induction}\label{SectionPind}
We recall the notion of polynomial induction due to Brundan and
Kleshchev \cite{BK2}, and give self-contained proofs of a few of
their results.

Set $M_{m,n}=Hom(\ff^n,\ff^m)$, and for convenience let
$M_n=M_{n,n}$.  Let $\OO(M_{m,n})$ be the algebra of polynomials on
$M_{m,n}$.  There exists a natural action of $G_m \times G_n$ on
$M_{m,n}$ by
$$(g_1,g_2)\cdot A=g_1Ag_2^t.$$      Then $G_m \times G_n$ acts on
$\OO(M_{m,n})$ by
$$((g_1,g_2) \cdot f)(A)=f(g_1^tAg_2).$$

\begin{definition}
\label{FunctorP_mn} Let $I_{0}$ be the functor from $Rep(G_n)$ to
$Rep(G_{n+1})$ given by
$$
V_n \mapsto (V_n\otimes\OO(M_{n,n+1}))^{G_n}.
$$
\end{definition}
Here the invariants are taken with respect to the tensor product
action of $G_n$ on $V_n\otimes\OO(M_{n,n+1})$.  This action commutes
with the action of $G_{n+1}$ on $\OO(M_{n,n+1})$ by right
translation, thus resulting in a $G_{n+1}$-module. By Proposition
A.3 in \cite{J},  $I_0(V_n)$ is a polynomial representation.

Now let $O_P:Rep(G_n) \rightarrow Rep(G_n)$ be the functor assigning to
a representation its maximal polynomial submodule.   Define
$Ind_{n}^{n+1}:Rep(G_n) \rightarrow Rep(G_{n+1})$ by
$$
Ind_{n}^{n+1}(V_n)=(V_n\otimes \mathcal{O}(G_{n+1}))^{G_n},$$ where
$\mathcal{O}(G_{n+1})$ denotes the algebra of regular functions on
$G_{n+1}$. Here, $G_{n+1}$ acts on $\mathcal{O}(G_{n+1})$ by right
translation, while $G_n$ acts by left translation.
\begin{definition}
\label{DefPind} The \emph{polynomial induction} functor, denoted $Pind_n ^{n+1}$, is the composition of
functors $O_P\circ Ind_{n}^{n+1}$.
\end{definition}

By Equation (1) in Section A.18 in \cite{J},
$O_P(\mathcal{O}(G_{n+1}))=\OO(M_{n+1})$.  From this we obtain
another formulation of polynomial induction: $$Pind_n ^{n+1}(V_n)
\simeq(V_n \otimes \OO(M_{n+1}))^{G_n}.$$

\begin{lemma}
\label{Pind decomp} There exists a natural $G_{n+1}$-isomorphism
$$Pind_n^{n+1}(V_n) \simeq S(\ff^{n+1})\otimes I_{0}(V_n).$$  Here
$S(\ff^{n+1})$ denotes the symmetric algebra of $\ff^{n+1}$, endowed
with the standard $G_{n+1}$ action induced from the natural
representation, and $G_{n+1}$ acts on $S(\ff^{n+1})\otimes
I_{0}(V_n)$ by the tensor product action.

\end{lemma}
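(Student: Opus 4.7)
The plan is to factor $\OO(M_{n+1})$ as a tensor product using a block decomposition of $M_{n+1}$, from which the claim will fall out almost immediately. Writing $A\in M_{n+1}$ as
\[
A=\begin{pmatrix} B \\ v^{t}\end{pmatrix}
\]
with $B\in M_{n,n+1}$ the first $n$ rows and $v\in\ff^{n+1}$ the transpose of the last row, the product decomposition $M_{n+1}\cong M_{n,n+1}\times \ff^{n+1}$ yields an algebra isomorphism
\[
\OO(M_{n+1})\;\cong\;\OO(M_{n,n+1})\otimes\OO(\ff^{n+1}).
\]

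Next I would verify two compatibilities. First, the left $G_n$-action (via the embedding $G_n\hookrightarrow G_{n+1}$ stabilizing $v_{n+1}$) preserves the decomposition and is trivial on the second factor: any such embedded element has block form $\mathrm{diag}(g,1)$, so when it acts on $A$ from the left it leaves the last row $v^t$ fixed, and acts only on $B$. Second, the right $G_{n+1}$-action preserves the decomposition, and restricted to the $\OO(\ff^{n+1})$ factor it coincides with the natural action on $S(\ff^{n+1})$. This reduces to a direct check on the coordinate functions $x_1,\ldots,x_{n+1}$: the formula $((e,g)\cdot f)(A)=f(Ag)$ gives $g\cdot x_i=\sum_j g_{ji}\,x_j$, which is precisely the action of $g$ on the standard basis $\{e_i\}$ of $\ff^{n+1}$.

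Combining these ingredients, and using that taking $G_n$-invariants commutes with tensoring over $\ff$ by a trivial $G_n$-module, one obtains
\[
Pind_n^{n+1}(V_n)=(V_n\otimes\OO(M_{n+1}))^{G_n}\cong (V_n\otimes\OO(M_{n,n+1}))^{G_n}\otimes S(\ff^{n+1})\cong I_0(V_n)\otimes S(\ff^{n+1}),
\]
the required natural $G_{n+1}$-isomorphism; naturality in $V_n$ is automatic from functoriality of invariants and tensor product. The main subtlety, and really the only hard part, is keeping the action conventions straight: one must check that the right $G_{n+1}$-action on the last row places the linear piece of $\OO(\ff^{n+1})$ into the standard representation itself, not its dual, so that the second tensor factor is genuinely $S(\ff^{n+1})$ and not $S((\ff^{n+1})^*)$.
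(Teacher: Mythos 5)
Your proof is correct and takes essentially the same approach as the paper's: the paper decomposes $M_{n+1}^*$ as a $G_n\times G_{n+1}$-module into $\ff^{n+1}\oplus M_{n,n+1}^*$ and then applies the symmetric algebra, which is exactly the block decomposition of $M_{n+1}$ (into the last row and the first $n$ rows) that you use at the level of coordinate rings. Your version is somewhat more careful in verifying the action conventions — in particular the point that the linear coordinates of the last row transform as the standard representation rather than its dual — which the paper states without comment.
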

\begin{proof}
As a $G_n\times G_{n+1}$-module $M_{n+1}^* \cong \ff^{n+1}\oplus
M_{n,n+1}^*$.  Here $M_{n+1}^*$ and $M_{n,n+1}^*$ are the linear duals of $M_{n+1}$, and $M_{n,n+1}$ and  $ \ff^{n+1}$ is
considered a $G_n \times G_{n+1}$-module with $G_n$ acting
trivially.  Hence as a $G_n\times G_{n+1}$-module $\OO(M_{n+1})
\cong S(\ff^{n+1})\otimes \OO(M_{n,n+1})$, where $S(\ff^{n+1})$ is
trivial as a $G_n$-module.  The result follows.
\end{proof}

\begin{lemma}
\label{Lemma R_0 adj P} $I_0$ defines a functor $\M_n\to\M_{n+1}$, and $R_0:\M_{n+1} \rightarrow
\M_{n}$ is left adjoint to $I_{0}:\M_n \rightarrow \M_{n+1}$.
\end{lemma}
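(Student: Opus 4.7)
The plan is to deduce both assertions from the standard adjunction between polynomial induction and restriction, and then to isolate the $R_0$/$I_0$ pair using the decomposition in Lemma~\ref{Pind decomp}.

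First, for the statement that $I_0$ takes $\M_n$ into $\M_{n+1}$: polynomial-ness of $I_0(V_n)$ is already quoted from \cite[Prop.~A.3]{J}. For finite-dimensionality, if $V_n\in\M_n(k)$ then the isomorphism $Pind_n^{n+1}(V_n)\simeq S(\ff^{n+1})\otimes I_0(V_n)$ identifies $I_0(V_n)$ with the degree-$k$ homogeneous summand of $Pind_n^{n+1}(V_n)$, which is finite-dimensional by the analysis of polynomial induction in \cite{BK2} (directly, the dominant weights contributing to that summand extend a partition of length $\leq n$ by a single non-negative entry and so form a finite set in each fixed degree).

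For the adjunction, the starting point is that $Pind_n^{n+1}$ is right adjoint to restriction on polynomial representations. Ordinary Frobenius reciprocity provides
\[
Hom_{G_{n+1}}(W,\, Ind_n^{n+1}(V_n)) \;\cong\; Hom_{G_n}(W|_{G_n},\, V_n)
\]
for $W\in Rep(G_{n+1})$. When $W$ is polynomial the image of any such morphism lies in the maximal polynomial submodule $O_P(Ind_n^{n+1}(V_n)) = Pind_n^{n+1}(V_n)$, so this restricts to $Hom_{\M_{n+1}}(W, Pind_n^{n+1}(V_n)) \cong Hom_{\M_n}(W|_{G_n}, V_n)$.

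To extract the $R_0\dashv I_0$ statement I would run a degree analysis. Take $V_n\in\M_n(k)$; by Lemma~\ref{Pind decomp} the degree-$m$ summand of $Pind_n^{n+1}(V_n)$ is $S^{m-k}(\ff^{n+1})\otimes I_0(V_n)$, so $I_0(V_n)$ is precisely the degree-$k$ summand. On the other side, $W|_{G_n}$ decomposes under the commuting corner $G_1$-action as $\bigoplus_i R_i(W)$, and the $Z_n$-weight of $R_i(W(m))$ equals $m-i$, so only the summand with $i = m-k$ can map to the degree-$k$ module $V_n$. Restricting the $Pind$-adjunction along the degree-$k$ pieces of $W$ and $Pind_n^{n+1}(V_n)$ therefore yields
\[
Hom_{\M_{n+1}}(W,\, I_0(V_n))\;\cong\;Hom_{\M_n}(R_0(W),\, V_n),
\]
and the case of general $V_n$ follows by summing over $V_n = \bigoplus_k V_n(k)$. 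The main obstacle is the bookkeeping around the three torus actions in play ($Z_n$, $Z_{n+1}$, and the corner $G_1$): verifying that the $Pind$-adjunction respects all of these gradings takes some care, but once the weight identities are pinned down the result is formal.
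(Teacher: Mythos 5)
Your proof is correct and follows essentially the same route as the paper's: both reduce to the Frobenius reciprocity adjunction between restriction and $Pind_n^{n+1}$, then use Lemma~\ref{Pind decomp} together with a degree/central-character comparison to cut $Pind_n^{n+1}$ down to $I_0$ on one side and $R$ down to $R_0$ on the other. The only part the paper leaves implicit (as do you, somewhat informally) is the verification that $I_0$ lands in finite-dimensional polynomial representations, but the argument you sketch is fine.
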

\begin{proof}
It suffices to show that for all $V_{n+1} \in \M_{n+1}$ and $W_n \in
\M_n$
$$Hom_{G_n}(R_0(V_{n+1}),W_n)\cong Hom_{G_{n+1}}(V_{n+1},I_{0}(W_n)).
$$
Since the functors $R_0$ and $I_0$ preserve degree, we can assume
that $V_{n+1}$ and $W_n$ are of the same degree $k$.  In this case
we have
\begin{eqnarray*}
Hom_{G_{n}}(R_0(V_{n+1}),W_n) & \cong & Hom_{G_{n}}(R(V_{n+1}),W_n)\\
&\cong& Hom_{G_{n+1}}(V_{n+1},Pind_n^{n+1}W_n)
\\ &\cong& Hom_{G_{n+1}}(V_{n+1},I_{0}(W_n)).
\end{eqnarray*}
Note that the second isomorphism follows from Frobenius
reciprocity, and the last one follows from Lemma \ref{Pind decomp} by degree considerations.
\end{proof}

Let $I_{n,n+1}$ be the $n\times n+1$ matrix with $1$ in entry $(i,i)$ for $1\leq i \leq n$ and $0$ elsewhere.

\begin{corollary}
\label{poly ind inverse} For $n \geq k$,
$$I_{0}:\M_n(k)\rightarrow \M_{n+1}(k)$$ is an inverse functor, up to isomorphism,
of $R_0$.  Moreover, the isomorphism $R_0I_0 \rightarrow Id$ is
given by evaluation at $I_{n,n+1}$.
\end{corollary}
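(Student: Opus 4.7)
The plan is to combine the adjunction of Lemma~\ref{Lemma R_0 adj P} with the equivalence of Proposition~\ref{R_0Equivalence}, which settles the inverse statement by abstract nonsense, and then to unwind the counit of this adjunction explicitly to identify it with evaluation at $I_{n,n+1}$.

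Since $R_0$ and $I_0$ both preserve the degree grading, Lemma~\ref{Lemma R_0 adj P} restricts to an adjunction $R_0 \dashv I_0$ between $\M_{n+1}(k)$ and $\M_n(k)$. For $n \geq k$, Proposition~\ref{R_0Equivalence} asserts that $R_0$ is an equivalence, so it admits a quasi-inverse which is automatically a right adjoint. Since right adjoints are unique up to natural isomorphism, $I_0$ must itself be a quasi-inverse of $R_0$, and the adjunction $R_0 \dashv I_0$ thus becomes an adjoint equivalence. In particular, both the unit $\eta \colon \mathrm{Id} \to I_0 R_0$ and the counit $\epsilon \colon R_0 I_0 \to \mathrm{Id}$ are natural isomorphisms, establishing the first assertion.

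To identify $\epsilon$ with evaluation at $I_{n,n+1}$, I trace $\mathrm{id}_{I_0(W_n)}$ through the chain of isomorphisms in the proof of Lemma~\ref{Lemma R_0 adj P}. First, compose the identity with the inclusion $I_0(W_n) \hookrightarrow Pind_n^{n+1}(W_n)$ coming from Lemma~\ref{Pind decomp}, namely $1 \otimes I_0(W_n) \hookrightarrow S(\ff^{n+1}) \otimes I_0(W_n) \cong Pind_n^{n+1}(W_n)$, which is the identification of $I_0(W_n)$ with the degree $k$ summand. Frobenius reciprocity then produces the $G_n$-equivariant map $R(I_0(W_n)) \to W_n$ given by evaluation at the identity matrix $I_{n+1}$. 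Under the $G_n$-module splitting $M_{n+1}^* \cong \ff^{n+1} \oplus M_{n,n+1}^*$ used in Lemma~\ref{Pind decomp}, $I_{n+1}$ corresponds to the pair $(e_{n+1}, I_{n,n+1})$, so evaluation at $I_{n+1}$ restricted to the summand $1 \otimes I_0(W_n)$ becomes exactly evaluation at $I_{n,n+1}$. Restricting further along $R_0(I_0(W_n)) = I_0(W_n)^{G_1} \hookrightarrow R(I_0(W_n))$ preserves this description, so $\epsilon_{W_n}$ is given by $f \mapsto f(I_{n,n+1})$.

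The abstract half is pure category theory and presents no difficulty. The main obstacle is the bookkeeping in the second paragraph, verifying that the natural isomorphisms from Lemmas~\ref{Pind decomp} and~\ref{Lemma R_0 adj P} compose to produce exactly the evaluation map. A convenient consistency check is that $I_{n,n+1}$ is $G_1$-fixed, since $G_1 \subset G_{n+1}$ scales the last column of $M_{n,n+1}$ and that column of $I_{n,n+1}$ is zero; hence evaluation at $I_{n,n+1}$ respects $G_1$-invariance and descends to $R_0(I_0(W_n))$.
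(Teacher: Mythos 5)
Your proposal is correct and follows essentially the same route as the paper: the first claim via the abstract fact that an adjoint of an equivalence is an inverse (using Proposition \ref{R_0Equivalence} and Lemma \ref{Lemma R_0 adj P}), and the second by tracing $\mathrm{id}_{I_0(W_n)}$ through the chain of isomorphisms in the proof of Lemma \ref{Lemma R_0 adj P}, where the inclusion $1\otimes I_0(W_n)\hookrightarrow Pind_n^{n+1}(W_n)$ is precomposition with the projection $M_{n+1}\to M_{n,n+1}$ and Frobenius reciprocity is evaluation at $I_{n+1}$, whose image under that projection is $I_{n,n+1}$. The only quibble is notational: $I_{n+1}$ decomposes as a point of $M_{n+1}\cong M_{n,n+1}\oplus(\ff^{n+1})^{*}$ rather than of the dual splitting you quote, but this does not affect the argument.
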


\begin{proof}
Recall that an adjoint of an equivalence is necessarily isomorphic to an inverse functor.  Therefore, by Proposition \ref{R_0Equivalence} and Lemma \ref{Lemma R_0 adj P}, the first statement of the corollary follows.  This implies that the counit $R_0I_0\to Id$ is an isomorphism.  It remains to show that it is given by evaluation at $I_{n,n+1}$.

The counit evaluated at $V_n\in\M_n$ is the morphism $R_0I_0(V_n)\to V_n$ obtained by passing the identity morphism of $I_0(V_n)$ through the chain of isomorphisms appearing in the proof of Lemma \ref{Lemma R_0 adj P}.  Viewing elements of $Pind_n^{n+1}(V_n)$ as functions from $M_{n+1}$ to $V_n$, and elements of $I_0(V_n)$ as functions from $M_{n,n+1}$ to $V_n$, the corollary follows from the following two observations.  Firstly, the isomorphism $Hom_{G_{n+1}}(V_{n+1},I_{0}(W_n))\cong Hom_{G_{n+1}}(V_{n+1},Pind_n^{n+1}W_n)$ maps a function $f:V_{n+1}\to I_0(W_n)$ to $(v\mapsto f(v)\circ pr)$, where $pr$ is the projection from $M_{n+1} \to M_{n,n+1}$.  Secondly, the Frobenius reciprocity isomorphism $Hom_{G_{n+1}}(V_{n+1},Pind_n^{n+1}W_n)\cong Hom_{G_{n}}(R(V_{n+1}),W_n)$ maps $f:V_{n+1}\to Pind_n^{n+1}W_n$ to $(v \mapsto f(v)(I_{n+1}))$.
\end{proof}

\begin{lemma}\label{Weyl-Module-Ind}
Let $n \geq k$ and $\lambda \in X(T_n)_+$ such that $|\lambda|=k$.
Then $R_0(V_{n+1}(\lambda)) \simeq V_n(\lambda)$ and
$I_{0}(V_n(\lambda)) \simeq V_{n+1}(\lambda)$.
\end{lemma}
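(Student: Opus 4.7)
The second isomorphism $I_0(V_n(\lambda)) \simeq V_{n+1}(\lambda)$ will follow from the first by applying $I_0$ and invoking Corollary \ref{poly ind inverse}, which gives $I_0 \circ R_0 \cong \mathrm{Id}$ on $\M_{n+1}(k)$ when $n \geq k$. I therefore plan to focus on proving $R_0(V_{n+1}(\lambda)) \simeq V_n(\lambda)$.

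The strategy is to produce a surjection $V_n(\lambda) \twoheadrightarrow R_0(V_{n+1}(\lambda))$ from the universal property of the Weyl module and then match dimensions via characters. Since $|\lambda|=k \leq n$, I regard $\lambda$ as a dominant weight of $T_{n+1}$ by setting $\lambda_{n+1}=0$; then a highest weight vector $v^+ \in V_{n+1}(\lambda)$ has vanishing $G_1$-weight, so $v^+ \in V_{n+1}(\lambda)^{G_1} = R_0(V_{n+1}(\lambda))$. The embedding $G_n \hookrightarrow G_{n+1}$ carries $B_n$ into $B_{n+1}$, so $v^+$ is also $B_n$-stable of $T_n$-weight $\lambda$, and the universal property recalled around \eqref{Eq2} produces a surjection $V_n(\lambda) \twoheadrightarrow G_n \cdot v^+ \subseteq R_0(V_{n+1}(\lambda))$.

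Next I will check that $G_n \cdot v^+$ exhausts $R_0(V_{n+1}(\lambda))$. Using the triangular decomposition of the distribution algebra of $G_{n+1}$ (which governs algebraic group generation in arbitrary characteristic), $V_{n+1}(\lambda)$ is spanned by products of divided powers $x_{ij}^{(r)}$ of negative root vectors ($i>j$) applied to $v^+$. The $\epsilon_{n+1}$-component of the $T_{n+1}$-weight of such a monomial equals $\sum_{k:\,i_k=n+1} r_k$, so only monomials built entirely from negative root vectors of $\g_n$ (those with $i \leq n$) contribute to $V_{n+1}(\lambda)^{G_1}$. This shows $R_0(V_{n+1}(\lambda)) = G_n \cdot v^+$. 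To upgrade the surjection $V_n(\lambda) \twoheadrightarrow R_0(V_{n+1}(\lambda))$ to an isomorphism, I will compare dimensions via characters: by the Weyl character formula $\mathrm{ch}\,V_{n+1}(\lambda) = s_\lambda(t_1,\dots,t_{n+1})$, which specializes at $t_{n+1}=0$ to $s_\lambda(t_1,\dots,t_n)$ precisely because $\lambda$ has at most $n$ parts; this equals the character of $V_n(\lambda)$, so the two sides have the same dimension.

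The main technical point is the weight-space argument in the previous paragraph: one has to invoke the distribution algebra, rather than the naive universal enveloping algebra $U(\g_{n+1})$, so that generation under the algebraic group action is captured correctly in characteristic $p$. Beyond this bookkeeping, the argument is a routine combination of the Weyl module's universal property with a Schur-polynomial specialization.
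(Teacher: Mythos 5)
Your proof is correct, and it takes a genuinely different route from the paper's. The paper argues entirely through characters: it invokes Theorem~\ref{Donkin} to get a Weyl filtration on $R_0(V_{n+1}(\lambda))$, so that $\mathrm{char}_{\ff}\,R_0(V_{n+1}(\lambda))$ is characteristic-independent, then equates this with $\mathrm{char}_{\C}\,V_n(\lambda)$ via the classical branching rule for $G_{n+1}\downarrow G_n$ in characteristic zero, and concludes by characteristic-independence of the Weyl character. (Implicit in that argument, and worth making explicit in your own write-up if you compare, is that a module with a Weyl filtration whose character coincides with that of a single Weyl module is isomorphic to that Weyl module.) You instead build the isomorphism by hand: you note that the highest weight line of $V_{n+1}(\lambda)$ lies in $R_0$, invoke the universal property of $V_n(\lambda)$ to get a surjection onto the $G_n$-cyclic submodule $G_n\cdot v^+$, use the PBW/triangular decomposition of the distribution algebra and an $\epsilon_{n+1}$-weight bookkeeping to show $G_n\cdot v^+=R_0(V_{n+1}(\lambda))$, and close with a Schur-polynomial specialization to match dimensions. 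Your second step effectively replaces the appeal to Donkin's theorem with a direct distribution-algebra computation, and your argument has the pedagogical advantage of exhibiting the map explicitly rather than merely concluding an isomorphism from equality of characters; the paper's route is shorter but leans on heavier structural results about Weyl filtrations. One small remark: your two closing steps are both needed and play complementary roles — the distribution-algebra argument shows the surjection has the correct image, and the dimension count kills the kernel; neither step alone would suffice. Your reduction of the second isomorphism to the first via Corollary~\ref{poly ind inverse} agrees with the paper.
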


\begin{proof}
The first claim follows by the following series of equalities of
characters:
 \begin{eqnarray*}
 char_{\ff}(R_0(V_{n+1}(\lambda))) &=& char_{\C}(R_0(V_{n+1}(\lambda))) \\ &=& char_{\C}(V_{n}(\lambda)) \\ &=& char_{\ff}(V_n(\lambda)).
 \end{eqnarray*}
The first equality is a consequence of that fact that
$R_0(V_{n+1}(\lambda))$ admits a Weyl filtration (Theorem
\ref{Donkin}).  The second equality follows by the classical
branching rules for the general linear groups, and the last one by
definition of Weyl modules.

The second isomorphism in the statement of the lemma follows from
the first by Corollary \ref{poly ind inverse}.\qedhere

\end{proof}

Given $(V_n,\alpha_n) \in \M$, let
$\alpha_n^{\vee}:V_{n+1}\rightarrow I_0(V_n)$ be the morphism
induced from $\alpha_n:R_0(V_{n+1})\rightarrow V_n$ by Lemma
\ref{Lemma R_0 adj P}.  Then   $\alpha_n^{\vee}$ is an isomorphism,
and Lemma \ref{Lemma R_0 adj P} and Corollary \ref{poly ind inverse} imply
the following:
\begin{lemma}
\label{Diagram10} We have the following commutative diagram:
$$
\xymatrix{ R_0(V_{n+1})\ar[r]^<<<<<{\alpha_n^{\vee}}
\ar[dr]^{\alpha_n} &R_0 \circ I_0(V_n) \ar[d]^{ev_{I_{n,n+1}}} \\
& V_n }
$$
\end{lemma}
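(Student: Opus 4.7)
The plan is to recognize the lemma as an instance of the triangle identity for the adjunction $R_0 \dashv I_0$ established in Lemma \ref{Lemma R_0 adj P}, combined with the explicit description of the counit from Corollary \ref{poly ind inverse}. A minor notational point first: the top horizontal arrow in the displayed diagram, though labelled $\alpha_n^{\vee}$, should really be read as $R_0(\alpha_n^{\vee})$, since $\alpha_n^{\vee}$ itself is a morphism $V_{n+1} \to I_0(V_n)$ and must be transported to a morphism $R_0(V_{n+1}) \to R_0 I_0(V_n)$ before it can be composed with $ev_{I_{n,n+1}}$.

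With that understood, I would argue as follows. Recall the standard categorical fact that for any adjoint pair $(L,R)$ with counit $\epsilon$, if $\phi\colon A \to R(B)$ is the transpose of $\psi\colon L(A) \to B$ under the adjunction isomorphism, then $\psi = \epsilon_B \circ L(\phi)$. Specializing to $L = R_0$, $R = I_0$, $\psi = \alpha_n$, and $\phi = \alpha_n^{\vee}$ — the latter being by definition the adjoint transpose of $\alpha_n$ in the sentence immediately preceding the statement — yields
$$\alpha_n = \epsilon_{V_n} \circ R_0(\alpha_n^{\vee}),$$
where $\epsilon$ is the counit of $R_0 \dashv I_0$. Corollary \ref{poly ind inverse} then identifies $\epsilon_{V_n}$ with evaluation at $I_{n,n+1}$, and substituting this gives exactly the commutativity asserted by the diagram.

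There is essentially no obstacle here: both ingredients are already in hand. The only step that required genuine computation was the identification of the counit with the concrete map $ev_{I_{n,n+1}}$, and this was carried out in the proof of Corollary \ref{poly ind inverse} by chasing through the three isomorphisms (restriction versus $R_0$, Frobenius reciprocity, and the $Pind$-decomposition of Lemma \ref{Pind decomp}) that define the adjunction in Lemma \ref{Lemma R_0 adj P}. The present lemma thus serves primarily as a convenient packaging of these facts for later reference, and the write-up should be correspondingly brief.
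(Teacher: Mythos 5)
Your proposal is correct and is exactly the argument the paper intends: the paper offers no separate proof, simply asserting that Lemma \ref{Lemma R_0 adj P} and Corollary \ref{poly ind inverse} imply the diagram, and your write-up supplies precisely that reasoning --- the standard adjunction factorization $\alpha_n=\epsilon_{V_n}\circ R_0(\alpha_n^{\vee})$ for $R_0\dashv I_0$ together with the identification of the counit with $ev_{I_{n,n+1}}$ from Corollary \ref{poly ind inverse}. Your remark that the top arrow labelled $\alpha_n^{\vee}$ is to be read as $R_0(\alpha_n^{\vee})$ is also the right reading of the paper's notation.
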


\subsection{Objects in $\M$}  We parameterize the Weyl (i.e. standard) and simple objects in $\M$.
\label{Sec3.3}

\subsubsection{The projection functor}

By Proposition \ref{Forgetful-Functor} we have the following
equivalence of categories:
\begin{equation}
\label{BigEquivalence} \Psi:\bigoplus_{k=0}^{\infty} \M(k)
\rightarrow \bigoplus_{k=0}^{\infty} \M_k(k)
\end{equation}
which is defined by taking the direct sum of projection functors
$\Psi_{k}(k)$.  We now describe the inverse functor to $\Psi$.
\begin{definition}
To a representation $V_k \in \M_k(k)$, we associate an object
$\Psi^{-1}(V_k) \in \M$ as follows:
$\Psi^{-1}(V_k)=(V_n)_{n=0}^{\infty}$ where for $n \geq k$
$$
V_n=I_0^{n-k}(V_k)
$$
and for $n<k$
$$
V_n=R_0^{k-n}(V_k).
$$

\end{definition}
Since $I_{0}$ and $R_0$ are inverse functors to each other for $n
\geq k$ (Corollary \ref{poly ind inverse}), this gives a
well-defined object in $\M$.  The same formulas apply to morphisms
as well; we thus obtain a functor $\Psi^{-1}:\M_k \rightarrow \M$.
By taking direct sum we obtain the functor
$$
\Psi^{-1}:\bigoplus_{k=0}^{\infty} \M_k(k) \rightarrow \M.
$$
The next proposition, which follows directly from the fact that
$I_{0}$ and $R_0$ are inverse functors up to isomorphism,  justifies
our choice of notation.
\begin{proposition}
\label{InvertoS} The functor $\Psi^{-1}$ is inverse to $\Psi$.
\end{proposition}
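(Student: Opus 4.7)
The plan is to build natural isomorphisms in both directions, working one homogeneous piece $\M(k)\leftrightarrow\M_k(k)$ at a time, since all functors involved preserve the degree grading. The composition $\Psi\circ\Psi^{-1}$ is essentially tautological: by construction the $k$-th term of $\Psi^{-1}(V_k)$ is $I_0^{0}(V_k)=V_k$ and likewise on morphisms, so $\Psi_k(k)\circ\Psi^{-1}$ is literally the identity on $\M_k(k)$.

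For the other direction, given $V=(V_n,\alpha_n)\in\M(k)$ I will construct an iso $\eta_V\colon V\to\Psi^{-1}(V_k)$ in $\widetilde{\M}$ componentwise. Set $\eta_{V,k}=\mathrm{id}$. For $n<k$, $(\Psi^{-1}V_k)_n=R_0^{k-n}(V_k)$ and the telescoping composition $R_0^{k-n}(V_k)\to\cdots\to R_0(V_{n+1})\xrightarrow{\alpha_n}V_n$ built from the given $\alpha_m$'s is an isomorphism; take $\eta_{V,n}$ to be its inverse. For $n>k$, $(\Psi^{-1}V_k)_n=I_0^{n-k}(V_k)$ and I take
$$\eta_{V,n}=I_0^{n-k-1}(\alpha_k^\vee)\circ I_0^{n-k-2}(\alpha_{k+1}^\vee)\circ\cdots\circ\alpha_{n-1}^\vee.$$
Each $\alpha_m^\vee$ with $m\geq k$ is an iso, because the adjunction $R_0\dashv I_0$ of Lemma \ref{Lemma R_0 adj P} restricts, by Corollary \ref{poly ind inverse}, to an inverse pair of equivalences between $\M_{m+1}(k)$ and $\M_m(k)$.

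The substantive point is to verify that $(\eta_{V,n})$ intertwines the structure maps, i.e.\ that
$$\xymatrix{R_0(V_{n+1})\ar[r]^{\alpha_n}\ar[d]_{R_0(\eta_{V,n+1})} & V_n\ar[d]^{\eta_{V,n}}\\ R_0\bigl((\Psi^{-1}V_k)_{n+1}\bigr)\ar[r] & (\Psi^{-1}V_k)_n }$$
commutes. For $n<k-1$ the bottom arrow is the identity and the claim is immediate from the telescoping construction; for $n=k-1$ it reduces to $\eta_{V,k-1}\circ\alpha_{k-1}=\mathrm{id}$, which holds by definition. The crucial case is $n\geq k$, where the bottom arrow is the counit $\mathrm{ev}_{I_{n,n+1}}$. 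Using the recursion $\eta_{V,n+1}=I_0(\eta_{V,n})\circ\alpha_n^\vee$, naturality of the counit, and the factorisation $\alpha_n=\mathrm{ev}_{I_{n,n+1}}\circ R_0(\alpha_n^\vee)$ of Lemma \ref{Diagram10}, the square collapses.

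Naturality of $\eta$ in $V$ then reduces to naturality of passing to adjuncts under $R_0\dashv I_0$, which is immediate from Lemma \ref{Lemma R_0 adj P}. The main and essentially only obstacle is the compatibility at the $n\geq k$ interface between the given $\alpha_n$ and the counit structure maps of $\Psi^{-1}(V_k)$; Lemma \ref{Diagram10} is precisely the tool tailor-made to dispatch it.
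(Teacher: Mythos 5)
Your proof is correct, and it takes the same underlying approach as the paper, which essentially dismisses the proposition as an immediate consequence of $I_0$ and $R_0$ being inverse up to isomorphism (Corollary~\ref{poly ind inverse}) and offers no explicit argument; you have simply spelled out, in full detail, the componentwise natural isomorphism the paper leaves implicit, with the $n\geq k$ compatibility handled exactly as the paper's Lemma~\ref{Diagram10} is designed to allow.
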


\subsubsection{Simple objects in $\M$}
Recall that $\Lambda$ is the set of all partitions.  Let $\Lambda_k$
be the set of partitions of $k$.

\begin{proposition}
The simple objects of $\M$ are in canonical one-to-one
correspondence with $\Lambda$.
\end{proposition}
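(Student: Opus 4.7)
The plan is to leverage the category equivalence $\Psi: \bigoplus_{k \geq 0} \M(k) \to \bigoplus_{k \geq 0} \M_k(k)$ established in Proposition \ref{InvertoS}. Since equivalences of categories preserve simple objects, it suffices to classify the simples of $\bigoplus_k \M_k(k)$, and these are exactly the simples in each summand $\M_k(k)$, as a simple object in a direct sum of abelian categories must lie in a single summand.

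Next, I would identify the simples of $\M_k(k)$. By Proposition \ref{SimplesforGL}, the simple rational $G_k$-modules are the $L_k(\lambda)$ with $\lambda \in X(T_k)_+$. A simple module $L_k(\lambda)$ lies in $\M_k$ (is polynomial) if and only if every $\lambda_i \geq 0$, and it lies in $\M_k(k)$ (is of degree $k$) if and only if additionally $|\lambda| = k$. Thus the simple objects of $\M_k(k)$ are precisely $\{L_k(\lambda) : \lambda \in \Lambda_k\}$, where $\Lambda_k$ denotes partitions of $k$.

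Combining these two steps, the simple objects of $\M$ are precisely the objects $L(\lambda) := \Psi^{-1}(L_k(\lambda))$ as $k$ ranges over $\mathbb{Z}_{\geq 0}$ and $\lambda$ ranges over $\Lambda_k$. Since $\Lambda = \bigsqcup_{k \geq 0} \Lambda_k$, this gives the desired canonical bijection between simples in $\M$ and elements of $\Lambda$. Concretely, for $\lambda \vdash k$, the corresponding simple object in $\M$ is the sequence $(V_n)$ with $V_n = R_0^{k-n}(L_k(\lambda))$ for $n < k$ and $V_n = I_0^{n-k}(L_k(\lambda))$ for $n \geq k$.

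There is no real obstacle here; the substantive work was already completed in Proposition \ref{R_0Equivalence}, Corollary \ref{poly ind inverse}, and Proposition \ref{InvertoS}, which together promote the stabilization at level $k$ into an honest equivalence. The only minor point to be careful about is verifying that the decomposition $\M = \bigoplus_k \M(k)$ really lets us conclude that simples lie in a single homogeneous component — but this follows immediately from the fact that the center $Z_n$ acts by the character $z \mapsto z^k$ on any degree-$k$ piece, so the decomposition is a decomposition by central character at each finite level.
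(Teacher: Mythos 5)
Your proof is correct and takes essentially the same route as the paper: classify the simples of each $\M_k(k)$ via Proposition \ref{SimplesforGL}, then transport them through the equivalence $\Psi$. The extra remarks about simples in a direct sum lying in a single summand, and the explicit sequence description of $L(\lambda)$, are fine but implicit in the paper's version.
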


\begin{proof}

By Proposition \ref{SimplesforGL}, the simple objects of $\M_k(k)$
are, up to isomorphism, precisely $$\{L_k(\lambda): \lambda \in
\Lambda_k \}$$ where we identify the partition $\lambda \in
\Lambda_k$ with the weight $\lambda \in X(T_k)_+$.  It follows that
the simple objects on the right hand side of (\ref{BigEquivalence})
are, up to isomorphism, given by
$$
\{ L_{|\lambda|}(\lambda): \lambda \in \Lambda\}.
$$
Since $\Psi$ is an equivalence the result follows.
\end{proof}

Let $L(\lambda) \in \M$ denote the simple object corresponding to
$L_{|\lambda|}(\lambda)$ under $\Psi$, i.e.
$$L(\lambda)=\Psi^{-1}(L_{|\lambda|}(\lambda)).$$ We take now the
opportunity to record that Schur's lemma holds in $\M$.

\begin{lemma}
\label{Schurslemma} Let $L,L' \in \M$ be simple objects. Then
$$
Hom_{\PP}(L,L') \simeq \left\{
\begin{array}{lr}
\ff \text{   if } L \simeq L'  &  \\
0 \text{  otherwise }
\end{array}
\right.
$$
\end{lemma}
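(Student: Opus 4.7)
The plan is to reduce the statement to the classical Schur's lemma for representations of $G_k = GL_k(\ff)$ using the direct sum decomposition $\M = \bigoplus_{k \geq 0} \M(k)$ and the equivalence $\Psi:\M(k)\to \M_k(k)$ established in Proposition \ref{Forgetful-Functor}.

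First I would handle the degree mismatch. Every simple object of $\M$ is of the form $L(\lambda)$ with $L(\lambda)\in\M(|\lambda|)$, so it suffices to consider $L = L(\lambda)$ and $L' = L(\mu)$. If $|\lambda|\neq |\mu|$, then for each $n$ the representation $L(\lambda)_n$ has degree $|\lambda|$ and $L(\mu)_n$ has degree $|\mu|$ as $G_n$-modules. These are distinct weights of the center $Z_n\subset G_n$, so $\Hom_{G_n}(L(\lambda)_n, L(\mu)_n) = 0$ for every $n$, and hence $\Hom_\M(L,L') = 0$ by the definition of morphisms in $\widetilde\M$ as a limit.

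Next, suppose $L, L' \in \M(k)$ for the same $k$. Applying the equivalence $\Psi = \Psi_k(k):\M(k)\to \M_k(k)$ gives
\[
\Hom_\M(L, L') \;\cong\; \Hom_{G_k}(\Psi(L), \Psi(L')).
\]
Now $\Psi(L)$ and $\Psi(L')$ are simple objects of $\M_k(k)$, which is a full subcategory of finite-dimensional $G_k$-representations. Since $\ff$ is algebraically closed, classical Schur's lemma gives that this Hom-space is $\ff$ if $\Psi(L)\cong \Psi(L')$ and $0$ otherwise; transporting back via the equivalence $\Psi$ yields the desired statement.

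No serious obstacles are expected: the only subtlety is justifying that $\Hom$ in the limit category vanishes when the degree components differ, which follows immediately from the central character obstruction at each finite level $n$. Everything else is formal, relying on Proposition \ref{Forgetful-Functor} and the classical Schur's lemma.
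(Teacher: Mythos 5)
Your argument is correct and is exactly the one the paper tacitly has in mind: the paper states the lemma without proof, treating it as an immediate consequence of the equivalence $\Psi_k(k):\M(k)\to\M_k(k)$ from Proposition \ref{Forgetful-Functor} together with classical Schur's lemma for $G_k$ over the algebraically closed field $\ff$. One small remark: the vanishing when $|\lambda|\neq|\mu|$ is already built into the definition of $\M$ as the direct sum $\bigoplus_k\M(k)$ of categories, so the central-character argument, while valid, is not strictly needed.
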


\subsubsection{Weyl objects in $\M$}

\begin{definition}
\begin{enumerate}
\item An object $V=(V_n)\in \M$ is called a \emph{Weyl object}, if for $n \gg0$,
$V_n$ is isomorphic to some Weyl module of $G_n$.
\item An ascending chain $0=V^0\subset V^1\subset \cdots \subset V^k=V$ of sub-objects of $V \in \M$ is called a \emph{Weyl filtration} if each factor $V^i/V^{i-1}$
is isomorphic to some Weyl object.
\end{enumerate}
\end{definition}

%\begin{lemma}\label{Weyl-Equivalence}
%Let $V=(V_n) \in \M$ be an object of degree $k$. Then $V$ is a Weyl object
%if and only if$V_n\in \M_n$ is a Weyl object, for some  $n\geq k$.  Moreover, %if $V,W \in \M$ are Weyl objects, then $V \simeq W$ if and only if$V_n %\simeq W_n$ for some
%\end{lemma}

%\begin{proof}
%The forward direction of the claim is tautological.  Conversely, suppose %$V_n \in \M_n$ is a Weyl object for some $n \geq k$.  Then $V_{n+1} \simeq %P_{n,n+1}(V_n)$, and therefore by the above lemma $V_{n+1}$ is isomorphic %to a Weyl module.  Continuing in this way we conclude that $V_{n+2},V_{n+3},...$ %are all isomorphic to Weyl modules.  Therefore $V$ is a Weyl object.    % \end{proof}

\begin{proposition}
\label{WeylObjects} For $\lambda \in \Lambda_k$ let
$V(\lambda)=\Psi^{-1}(V_k(\lambda)) \in \M$. Then $V(\lambda)$ is a
Weyl object in $\M$, and the Weyl objects of $\M$ are up to
isomorphism precisely the collection
$$
\{ V(\lambda): \lambda \in \Lambda\}.
$$
\end{proposition}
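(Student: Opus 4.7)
The plan is to prove the two assertions separately: that each $V(\lambda)$ is a Weyl object, and that every Weyl object in $\M$ is isomorphic to some $V(\mu)$ with $\mu$ uniquely determined.

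For the first claim I would unpack the definition of $\Psi^{-1}$: setting $k = |\lambda|$, the object $V(\lambda) = (V_n)$ has $V_n = I_0^{n-k}(V_k(\lambda))$ for $n \geq k$. A straightforward induction on $n$ using Lemma~\ref{Weyl-Module-Ind} (which gives $I_0(V_m(\lambda)) \cong V_{m+1}(\lambda)$ whenever $m \geq k$) yields $V_n \cong V_n(\lambda)$ for all $n \geq k$; hence the $n$-th component is a Weyl module for $n \gg 0$ and $V(\lambda)$ is a Weyl object.

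For the converse, let $W = (W_n,\alpha_n) \in \M$ be a Weyl object, so $W_n \cong V_n(\mu^{(n)})$ for some $\mu^{(n)} \in X(T_n)_+$ and every $n$ past some threshold. Since $R_0$ preserves degree and $\alpha_n \colon R_0(W_{n+1}) \to W_n$ is an isomorphism, $|\mu^{(n)}|$ is eventually constant; call the common value $k$. For $n \geq k$, Lemma~\ref{Weyl-Module-Ind} gives $R_0(V_{n+1}(\mu^{(n+1)})) \cong V_n(\mu^{(n+1)})$, and combining with $R_0(W_{n+1}) \cong W_n$ we obtain $V_n(\mu^{(n)}) \cong V_n(\mu^{(n+1)})$, which forces $\mu^{(n)} = \mu^{(n+1)}$ by uniqueness of highest weights. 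Thus the sequence stabilizes to a single partition $\mu$ with $|\mu| = k$.

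It remains to conclude $W \cong V(\mu)$. Write $W = \bigoplus_d W(d)$ in $\widetilde{\M}$; this sum is finite since $W$ is compact. For $n \gg 0$, $W_n$ is homogeneous of degree $k$, so $W(d)_n = 0$ whenever $d \neq k$ and $n$ is large; by Corollary~\ref{poly ind inverse}, $I_0$ and $R_0$ are mutually inverse equivalences on $\M(d)$ for $n \geq d$, so vanishing of one large component propagates, giving $W(d) = 0$ for $d \neq k$. Hence $W \in \M(k)$. Descending through the $\alpha_n$'s and applying Lemma~\ref{Weyl-Module-Ind} again identifies $W_k \cong V_k(\mu)$, so by Proposition~\ref{InvertoS} we get $W \cong \Psi^{-1}(V_k(\mu)) = V(\mu)$. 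Distinctness of the $V(\lambda)$ for distinct $\lambda$ is automatic, since $\mu$ is intrinsically recovered from $W$ as the highest weight of $W_n$ for $n$ large.

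The main technical obstacle is the stabilization step, which weaves the coherence isomorphisms $\alpha_n$ of $W$ together with the branching information of Lemma~\ref{Weyl-Module-Ind} to pin down a single partition $\mu$ governing all large components of $W$. Once stabilization and homogeneity are secured, Proposition~\ref{Forgetful-Functor} reduces the classification of Weyl objects in $\M$ to the classical classification of Weyl modules over $G_k$ by partitions of $k$.
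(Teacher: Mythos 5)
Your proof is correct and follows essentially the same route as the paper, which is very terse: it simply asserts that for a Weyl object $V$ one has $\Psi_k(V) \simeq V_k(\lambda)$ for some $k$ and $\lambda \in \Lambda_k$, and then invokes Proposition \ref{InvertoS}. You fill in precisely the details the paper leaves implicit — that the partitions $\mu^{(n)}$ labeling the large components stabilize, that the Weyl object is automatically homogeneous, and that Lemma \ref{Weyl-Module-Ind} together with Corollary \ref{poly ind inverse} transports the Weyl-module structure down to the $k$-th component. This is a faithful and slightly more careful version of the paper's argument, not a different approach.
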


\begin{proof}

By Lemma \ref{Weyl-Module-Ind}, $V(\lambda)$ is a Weyl object.
Moreover, it is clear that $V(\lambda) \ncong V(\mu)$, for $\lambda
\neq \mu$.  Finally, given a Weyl object $V$ then for some $k \geq
0$, $\Psi_{k}(V) \simeq V_k(\lambda)$, where $\lambda \in
\Lambda_k$.  Therefore by Proposition \ref{InvertoS}, $V \simeq
V(\lambda)$.
\end{proof}

\subsection{Functors on $\M$}
Our aim now is to define the endofunctors $E_i$ and $F_i$ on $\M$
that will be shown in the next section to categorify the action of
the Chevalley generators on Fock space. \label{SectionFunctors}
%We will now define a family of naturally occurring endofunctors of $\M$.
\subsubsection{The functors $E$ and $F$}

\begin{definition}
$F:\M \rightarrow \M$ is given by $F(V)=St \otimes V$.
\end{definition}
By definition of the tensor structure on $\M$, on morphisms $F$ is
given by $F(f)=(\iota_n \otimes f_n)$, where $f=(f_n)$ (see Example
\ref{Ex of Objects} for the definition of $\iota_n$).

To define $E:\M \rightarrow \M$ consider first the element $s_n \in
G_{n+1}$ given by,
\begin{equation}
\label{matrix} \left(
  \begin{array}{ccccc}
    1 &  &  &  &  \\
     & \ddots &  &  &  \\
     &  & 1 &  &  \\
     &  &  & 0 & -1 \\
     &  &
      & 1 &  0\\
  \end{array}
\right)
\end{equation}

\begin{lemma}{\label{Transposition}}
Let $n \geq 1$.  Consider the functors $R_1 \circ R_0$ and $R_0
\circ R_1$ from $\M_{n+1}$ to $\M_{n-1}$.  The action of $s_n$
induces a natural isomorphism $$s_n: R_1 \circ R_0 \rightarrow R_0
\circ R_1.$$
\end{lemma}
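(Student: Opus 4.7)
The plan is to show that multiplication by $s_n$ on $V\in\M_{n+1}$ restricts to a $G_{n-1}$-equivariant isomorphism between the specified invariant/weight spaces, and that this assignment is functorial.

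First I would unpack the definitions in terms of the two copies of $G_1$ sitting inside $G_{n+1}$. Write $T_{n+1}$ with coordinates $(t_1,\dots,t_{n+1})$; then $R_0$ (applied to a $G_{n+1}$-module) extracts the subspace on which $t_{n+1}$ acts trivially, while a subsequent $R_1$ extracts from that the subspace on which $t_n$ acts with weight $1$. Likewise $R_0\circ R_1$ extracts the subspace on which $t_n$ acts trivially and $t_{n+1}$ acts with weight $1$. So the lemma amounts to exhibiting a natural $G_{n-1}$-equivariant isomorphism between these two bi-weight spaces of $V$.

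Next I would observe that $s_n\in G_{n+1}$ lies in the normalizer of $T_{n+1}$ and that conjugation by $s_n$ swaps the coordinates $t_n$ and $t_{n+1}$ (the signs in \eqref{matrix} do not matter on diagonal matrices). For $v\in R_1 R_0(V)$ and any $t=\mathrm{diag}(1,\dots,1,t_n,t_{n+1})$, one computes
\[
t\cdot (s_n v) = s_n\cdot (s_n^{-1} t s_n)v = s_n\cdot \mathrm{diag}(1,\dots,1,t_{n+1},t_n)v = t_{n+1}\,(s_n v),
\]
so $s_n v\in R_0 R_1(V)$. The same argument with $s_n^{-1}$ in place of $s_n$ shows $s_n^{-1}(R_0 R_1(V))\subset R_1 R_0(V)$, so $s_n$ restricts to an isomorphism $R_1R_0(V)\xrightarrow{\sim} R_0R_1(V)$.

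For $G_{n-1}$-equivariance I would use that $G_{n-1}$ is embedded in $G_{n+1}$ as automorphisms fixing both $v_n$ and $v_{n+1}$, hence every element of $G_{n-1}$ commutes with $s_n$ inside $G_{n+1}$; so the restricted map intertwines the $G_{n-1}$-actions on source and target. For naturality in $V$, any morphism $f:V\to W$ in $\M_{n+1}$ is $G_{n+1}$-equivariant, hence commutes with the action of $s_n$, and in particular its restrictions to the appropriate bi-weight spaces form a commutative square; this says the family $\{s_n\colon R_1R_0(V)\to R_0R_1(V)\}_{V\in\M_{n+1}}$ is a natural transformation. There is essentially no obstacle here beyond the weight-space bookkeeping in the conjugation step; the chief point to get right is that conjugation by the Weyl-group representative $s_n$ acts on weights by the transposition $(n,n+1)$, which is exactly what converts the conditions defining $R_1R_0$ into those defining $R_0R_1$.
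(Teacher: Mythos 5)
Your proposal is correct and is essentially the paper's argument: the proof there rests on exactly the same two facts, namely that $s_n$ commutes with $G_{n-1}$ and that conjugation by $s_n$ swaps the last two torus coordinates, which interchanges the bi-weight conditions defining $R_1\circ R_0$ and $R_0\circ R_1$. You simply spell out the weight-space bookkeeping and naturality that the paper leaves implicit.
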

\begin{proof}
  This lemma follows from the fact that $s_n$ commutes with $G_{n-1}$, and an elementary computation:
$$
\left(
  \begin{array}{cc}
    0 & -1 \\
    1 & 0 \\
  \end{array}
\right) \left(
  \begin{array}{cc}
    t_1 & 0 \\
    0 & t_2 \\
  \end{array}
\right) \left(
  \begin{array}{cc}
    0 & 1 \\
    -1 & 0 \\
  \end{array}
\right) = \left(
  \begin{array}{cc}
    t_2 & 0 \\
    0 & t_1 \\
  \end{array}
\right).
$$
\qedhere
\end{proof}

\begin{definition}
Define a functor $E:\M \rightarrow \M$ as follows.  Let
$V=(V_n,\alpha_n) \in \M$.  Then $E(V)=(E(V)_n,\tilde{\alpha}_n),$
where $$E(V)_n=R_1(V_{n+1})$$ and
$$\tilde{\alpha}_n=R_1(\alpha_{n+1}) \circ s_{n+1}^{-1}.$$Given a
morphism $$f=(f_n):(V_n,\alpha_n) \rightarrow (W_n,\beta_n),$$ set
$E(f)=(E(f)_n)$, where
$$
E(f)_n=R_1(f_{n+1}).
$$
\end{definition}
\begin{lemma}
The functor $E:\M \rightarrow \M$ is well-defined.
\end{lemma}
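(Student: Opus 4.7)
The plan is to verify the four things hidden in the statement ``$E$ is well-defined'': (a) each $E(V)_n$ genuinely lies in $\M_n$; (b) each $\tilde\alpha_n$ is an isomorphism of $G_n$-modules (so that $E(V)$ is an object of $\widetilde{\M}$); (c) $E(V)$ is compact (so that $E(V) \in \M$, not just in $\widetilde{\M}$); and (d) for a morphism $f=(f_n)$, the sequence $(R_1(f_{n+1}))$ really satisfies the compatibility square with the maps $\tilde\alpha_n,\tilde\beta_n$, and that $E$ respects composition and identities.

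For (a), I would observe that $R(V_{n+1}) = V_{n+1}|_{G_n}$ is polynomial as a $G_n$-module because $V_{n+1}$ is polynomial as a $G_{n+1}$-module; the $G_1$ acting to produce the decomposition $R=\bigoplus_i R_i$ commutes with $G_n$, so $R_1(V_{n+1})$ is a $G_n$-direct summand of a polynomial module, hence polynomial. For (b), the map $\tilde\alpha_n = R_1(\alpha_{n+1}) \circ s_{n+1}^{-1}$ is a composition of the $G_n$-module isomorphism $s_{n+1}^{-1}: R_0R_1(V_{n+2}) \to R_1R_0(V_{n+2})$ supplied by Lemma~\ref{Transposition} and the image $R_1(\alpha_{n+1})$ of the $G_{n+1}$-module isomorphism $\alpha_{n+1}$ under the functor $R_1$, so it is itself a $G_n$-module isomorphism $R_0(E(V)_{n+1}) \to E(V)_n$. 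For (c), a quick weight calculation (acting $\mathrm{diag}(z,\ldots,z,1)$ on a degree-$k$ vector of $V_{n+1}$ whose last torus coordinate is $1$) shows $R_1(\M_{n+1}(k)) \subset \M_n(k-1)$, hence $E$ takes the finite summand $\bigoplus_{k} V(k)$ to the finite summand $\bigoplus_{k} E(V(k))$, preserving compactness.

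The one step that requires a small diagram chase is (d). Given $f: V \to W$ with components $(f_n)$, I must check that for $n\gg 0$
$$\tilde\beta_n \circ R_0(E(f)_{n+1}) \;=\; E(f)_n \circ \tilde\alpha_n.$$
Substituting the definitions, the left-hand side equals $R_1(\beta_{n+1}) \circ s_{n+1}^{-1}(W_{n+2}) \circ R_0R_1(f_{n+2})$. Naturality of the isomorphism of functors $s_{n+1}^{-1}:R_0\circ R_1 \to R_1\circ R_0$ from Lemma~\ref{Transposition} lets me commute it past $R_0R_1(f_{n+2})$, converting the expression to $R_1(\beta_{n+1} \circ R_0(f_{n+2})) \circ s_{n+1}^{-1}(V_{n+2})$. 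The defining compatibility $\beta_{n+1} \circ R_0(f_{n+2}) = f_{n+1} \circ \alpha_{n+1}$ (valid for $n\gg 0$) then reduces this to $R_1(f_{n+1}) \circ R_1(\alpha_{n+1}) \circ s_{n+1}^{-1}(V_{n+2}) = E(f)_n \circ \tilde\alpha_n$, as desired. Functoriality ($E(g\circ f) = E(g)\circ E(f)$ and $E(\mathrm{id})=\mathrm{id}$) is then immediate from functoriality of $R_1$ applied componentwise.

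No step is a serious obstacle: the only real content is the naturality-plus-compatibility juggle in (d), which is entirely formal once Lemma~\ref{Transposition} is in hand. The computation in (c) is the least obvious routine item, but it is handled by the weight observation above.
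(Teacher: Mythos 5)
Your argument is correct and follows essentially the same route as the paper: the substantive step is the naturality-of-$s$ diagram chase in (d), which the paper also carries out (via the equivalent two-out-of-three-squares formulation), while items (a)--(c) are what the paper dismisses with ``it is clear that $E(V)\in\M$.'' Your explicit verification of those routine points, in particular the weight computation showing $R_1(\M_{n+1}(k))\subset\M_n(k-1)$, is a fine addition but not a departure from the paper's method.
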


\begin{proof}
Let $V \in \M$.  It is clear that $E(V)\in\M$.  Suppose
$$f=(f_n):V=(V_n,\alpha_n) \rightarrow W=(W_n,\beta_n)$$ is a
morphism. We need to check that $E(f) \in Hom_{\PP}(E(V),E(W))$.

Consider the following diagram,
$$
\xymatrix{ R_1(R_0(V_{n+1})) \ar[r]^{s_n} \ar[d]^{R_1(R_0(f_{n+1}))}
& R_0(R_1(V_{n+1})) \ar[r]^>>>>>>{\tilde{\alpha}_{n-1}}
\ar[d]^{R_0(R_1(f_{n+1}))} & R_1(V_n) \ar[d]^{R_1(f_{n})} \\
R_1(R_0(W_{n+1})) \ar[r]^{s_n} & R_0(R_1(W_{n+1}))
\ar[r]^>>>>>{\tilde{\beta}_{n-1}} & R_1(W_n)}
$$
We must show that the right square commutes.  The left square
commutes by Lemma \ref{Transposition}.  The outer square commutes
since $f$ is a morphism.  Since $s_n$ is a natural isomorphism, it
follows that the right square commutes.
\end{proof}

\subsubsection{Adjointness of $E$ and $F$}

It will be necessary for us to know that $(E,F)$ is a bi-adjoint
pair.

\begin{theorem}
\label{AdjointPair} The functor $F:\M \rightarrow \M$ is right
adjoint to $E:\M \rightarrow \M$.
\end{theorem}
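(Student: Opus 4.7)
The plan is to construct unit $\eta : \mathrm{Id}_\M \to FE$ and counit $\epsilon : EF \to \mathrm{Id}_\M$ natural transformations and verify the triangle identities $(F\epsilon)(\eta F) = \mathrm{Id}_F$ and $(\epsilon E)(E\eta) = \mathrm{Id}_E$.  The central geometric input is the decomposition $\ff^{n+1}\big|_{G_n \times G_1} = \ff^n \oplus \ff v_{n+1}$, in which $v_{n+1}$ has $G_1$-weight $1$.  Combined with the $G_1$-weight decomposition $W_{n+1}\big|_{G_1} = \bigoplus_j R_j(W_{n+1})$, extracting the weight-$1$ part of $\ff^{n+1} \otimes W_{n+1}$ gives the canonical splitting
\[
EF(W)_n = R_1(\ff^{n+1} \otimes W_{n+1}) \cong FE(W)_n \oplus \bigl(\ff v_{n+1} \otimes R_0(W_{n+1})\bigr),
\]
and $\epsilon_{W,n}$ is the projection onto the second summand followed by $\beta_n : R_0(W_{n+1}) \to W_n$.

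For the unit, I use the matrix units $x_{n+1,i} \in \g_{n+1}$ for $1 \leq i \leq n$, which raise the $G_1$-weight by one and hence send $R_0(V_{n+1})$ into $R_1(V_{n+1})$.  Define
\[
\eta_{V,n}(v) = \sum_{i=1}^n v_i \otimes x_{n+1,i}\bigl(\alpha_n^{-1}(v)\bigr) \in \ff^n \otimes R_1(V_{n+1}) = FE(V)_n.
\]
The conjugation identity $g\, x_{n+1,i}\, g^{-1} = \sum_b (g^{-1})_{ib}\, x_{n+1,b}$ for $g \in G_n$, together with $G_n$-equivariance of $\alpha_n$, implies that $\eta_{V,n}$ is a $G_n$-morphism.

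Two further verifications remain: that $(\eta_{V,n})$ and $(\epsilon_{W,n})$ assemble into morphisms in $\M$ (i.e., intertwine the structure maps, including the twist $\tilde{\alpha}_n = R_1(\alpha_{n+1}) \circ s_{n+1}^{-1}$ carried by $E$), and the triangle identities.  Both reduce to the following property of polynomial representations, which I expect to be the main obstacle: the rotation $s_{n+1} \in G_{n+2}$ fixes $v_1, \ldots, v_n$, so it acts trivially on $R_0 R_0(V_{n+2})$ and satisfies the straightening identity $-x_{n+1, n+2}\bigl(s_{n+1}^{-1}(u)\bigr) = u$ for all $u \in R_1 R_0(V_{n+2})$.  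Both are established by reducing to tensor powers $(\ff^{n+2})^{\otimes r}$ and using that $s_{n+1}$ acts as the identity on any tensor factor lying in $\ff^n$.  Granted these, the morphism property of $\eta$ unwinds via $s_{n+1}^{-1} x_{n+2,i} s_{n+1} = x_{n+1,i}$ (for $i \leq n$) applied to a lift in $R_0 R_0(V_{n+2})$, and the identity $(\epsilon E)(E\eta) = \mathrm{Id}_E$ follows from the straightening identity applied to the only surviving term ($i = n+1$) of $R_1(\eta_{V,n+1})$.  The identity $(F\epsilon)(\eta F) = \mathrm{Id}_F$ is then direct: the Leibniz expansion
\[
\eta_{F(W),n}(v \otimes w) = v \otimes v_{n+1} \otimes \beta_n^{-1}(w) + \sum_{i=1}^n v_i \otimes v \otimes x_{n+1,i}\bigl(\beta_n^{-1}(w)\bigr)
\]
shows that $F\epsilon_W$ annihilates the second sum (which lies in $\ff^n \otimes R_1(W_{n+1})$) and returns $v \otimes w$ from the first.
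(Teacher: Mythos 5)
Your proof is correct, but it takes a genuinely different route from the paper. The paper never constructs a unit or counit: it proves the adjunction by exhibiting, for $n\gg 0$, a chain of isomorphisms $Hom_{G_n}(R_1V_{n+1},W_n)\cong Hom_{G_{n+1}}(V_{n+1},Pind_n^{n+1}W_n)\cong Hom_{G_{n+1}}(V_{n+1},\ff^{n+1}\otimes I_0(W_n))\cong Hom_{G_n}(V_n,\ff^n\otimes W_n)$, using Frobenius reciprocity for the polynomial induction functor $I_0$, the decomposition $Pind_n^{n+1}\simeq S(\ff^{n+1})\otimes I_0$, degree bookkeeping, and the equivalence $R_0:\M_{n+1}(k)\to\M_n(k)$. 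You instead write down explicit unit $\eta_{V,n}(v)=\sum_i v_i\otimes x_{n+1,i}(\alpha_n^{-1}(v))$ and counit given by projecting $R_1(\ff^{n+1}\otimes W_{n+1})$ onto $v_{n+1}\otimes R_0(W_{n+1})$ followed by $\beta_n$, and check the triangle identities; I verified that your two key facts about $s_{n+1}$ (triviality on $R_0R_0$ and the identity $-x_{n+1,n+2}\circ s_{n+1}^{-1}=\mathrm{id}$ on $R_1R_0$, equivalently $s_{n+1}^{-1}\circ x_{n+2,n+1}=\mathrm{id}$) do make the structure-map compatibility and both triangle identities go through, including the twist $\tilde{\alpha}_n=R_1(\alpha_{n+1})\circ s_{n+1}^{-1}$ carried by $E$. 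Your computations are very much in the spirit of the paper's Appendix Lemmas \ref{RepLem1} and \ref{RepLem2}, which establish identities of exactly this type. Trade-offs: your argument is more self-contained and gives adjunction data in closed form, whereas the paper's Hom-space chain also produces the explicit maps $\tau_n,\kappa_n$ (Lemma \ref{Lemma for tau and kappa}) that are reused later to prove $X^{\vee}=Y$ and the adjointness of $(E_a,F_a)$. One small point to make airtight: your ``reduction to tensor powers'' should cite the standard fact that every degree-$r$ polynomial representation of $G_{n+2}$ is a subquotient of a direct sum of copies of $(\ff^{n+2})^{\otimes r}$ (tensor space is a faithful module over the Schur algebra), or argue directly via $GL_2$-weight considerations as in the appendix; with that noted, the proof is complete.
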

\begin{proof}

We need to check that there are natural isomorphisms:
$$
Hom_{\M}(E(V),W)\cong Hom_{\M}(V,F(W))
$$
for all $V,W \in \M$.  In other words, for $n \gg 0$ we must show
that there are natural isomorphism
\begin{equation}
\label{Adjoint of E and F} \chi_n=\chi_n(V,W):Hom_{G_n}(V_n, \ff^n
\otimes  W_n) \rightarrow Hom_{G_n}(R_1V_{n+1},W_n)
\end{equation}
for all $V_n,W_n \in \M_n$. It suffices to prove (\ref{Adjoint of E
and F}) for the case when $V$\ and $W$ are homogeneous.   Moreover,
we can assume that $V \in \M(k+1)$ and $W \in \M(k)$ for some $k$,
otherwise both sides of (\ref{Adjoint of E and F}) are zero.  Recall
the induction functors $I_0$ and $Pind_n^{n+1}$ (see Definitions
\ref{FunctorP_mn} and \ref{DefPind}). We have the following chain of
isomorphisms, which holds for $n \gg0$:
\begin{align*}\label{adunction_of_E_and_F}
Hom_{G_n}(R_1V_{n+1},W_n) &\cong Hom_{G_n}(RV_{n+1},W_n)
\\ &\cong Hom_{G_{n+1}}(V_{n+1},Pind_n^{n+1}W_n)
 \\ & \cong Hom_{G_{n+1}}(V_{n+1}, Sym(\ff^{n+1})\otimes I_{0}(W_n)) \\ &\cong Hom_{G_{n+1}}(V_{n+1}, \ff^{n+1} \otimes I_{0}(W_n))\\  &\cong Hom_{G_{n}}(V_{n}, \ff^{n} \otimes R_{0} (I_{0}W_n))) \\ & \cong Hom_{G_{n}}(V_{n}, \ff^{n} \otimes W_n).
\end{align*}

The first and fourth isomorphisms follow by degree considerations.
The second since $Pind_n^{n+1}$ is adjoint to restriction from
$G_{n+1}$ to $G_n$.  The third is from Lemma \ref{Pind decomp}. The
fifth isomorphism follows since $R_0$ defines an equivalence from
$\M_{n+1}(k)$ to $\M_n(k)$ for $n \gg 0$, and from the fact that
$R_0(\ff^{n+1} \otimes I_{0}(W_n)) \cong R_0(\ff^{n+1}) \otimes
R_0(I_{0}(W_n))$ since both are polynomial representations. Finally
the last map is an isomorphism by Corollary \ref{poly ind
inverse}.
\end{proof}

Our next task is to show that $F$ is also left adjoint to $E$.
Consider $V_n$ a rational representation of $G_n$. We let $V_n'$ be
the contravariant dual representation of $G_n$, i.e.,
$$(g\cdot\psi)(v)=\psi(g^tv), $$
where $\psi\in V_n^{*}$ and $g^t$ is the transpose of $g\in G_n$. It
is easy to see $V_n'$ is polynomial if and only if $V_n$ is
polynomial.  (This is why we use $g^t$ to define the action of $G_n$ on $V_n^*$, rather than the more commonly used $g^{-1}$.)

We define a duality functor $\mathbb{D}$ on $\M$, by sending
$V=(V_n,\alpha_n)$ to $V'=(V_n',\alpha_n')$, where
$$
\alpha_n'=(\alpha_n^{-1})^T
$$
and we are making the canonical identification
$R_0(V_{n+1}')=R_0(V_{n+1})'.$ It is a contravariant functor, and it
is easy to see that $\mathbb{D}\circ\mathbb{D}\simeq Id$. Moreover,
we have the following lemma.
\begin{lemma}{\label{Commutativity}}
There are natural isomorphisms
$$\mathbb{D}\circ E\circ \mathbb{D}\simeq E,$$ and $$\mathbb{D}\circ F\circ \mathbb{D}\simeq F.$$
\end{lemma}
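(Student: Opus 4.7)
The plan is to build both natural isomorphisms level-by-level, with two key ingredients: $\mathbb{D}$ is a tensor functor up to natural isomorphism, and certain objects/subspaces are self-dual under the contravariant duality. For $F$ this will be almost immediate; the substance lies in handling $E$, where I must track both the weight-space decomposition $R = \bigoplus R_i$ and the twist by $s_{n+1}^{-1}$ appearing in the structure map of $E(V)$.

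For $F$, first note that for $V_n, W_n \in \M_n$ the canonical map $V_n' \otimes W_n' \to (V_n \otimes W_n)'$ is a $G_n$-equivariant isomorphism. These assemble into a natural isomorphism $\mathbb{D}(V \otimes W) \cong \mathbb{D}(V) \otimes \mathbb{D}(W)$ in $\widetilde{\M}$, compatibility with the structure maps being automatic since $R_0$ is a tensor functor. Next, the map $v_i \mapsto v_i^*$ gives a $G_n$-equivariant isomorphism $\ff^n \cong (\ff^n)'$, because the prescription $g\cdot v_i^* = \sum_j g_{ji}v_j^*$ agrees with the standard action on $\ff^n$ read in the dual basis; these isomorphisms commute with the restriction maps $\iota_n$ and so glue to $\mathbb{D}(St)\cong St$. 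Combining, $\mathbb{D}F\mathbb{D}(V) = \mathbb{D}(St \otimes \mathbb{D}(V)) \cong \mathbb{D}(St) \otimes \mathbb{D}^2(V) \cong St \otimes V = F(V)$.

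For $E$, the central observation is that the copy of $G_1 \subset G_{n+1}$ acting on $R(V_{n+1})$ sits as the $(n+1,n+1)$-entry and is therefore pointwise fixed by transposition. Consequently the canonical pairing $V_{n+1} \times V_{n+1}' \to \ff$ annihilates $R_i(V_{n+1}) \otimes R_j(V_{n+1}')$ whenever $i \neq j$, and restricts to a perfect $G_n$-equivariant pairing on each weight space. This yields natural $G_n$-isomorphisms $\phi_n : R_1(V_{n+1})' \xrightarrow{\sim} R_1(V_{n+1}')$, which are the candidate level-$n$ components of a natural isomorphism $\mathbb{D} \circ E \cong E \circ \mathbb{D}$. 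To lift these to a morphism in $\widetilde{\M}$ I must verify that they intertwine the structure maps: the map of $\mathbb{D}E(V)$ at level $n$ is $(R_1(\alpha_{n+1})\circ s_{n+1}^{-1})^{-T}$, while the map of $E\mathbb{D}(V)$ is $R_1((\alpha_{n+1}^{-1})^T)\circ s_{n+1}^{-1}$. These agree under $\phi_n$ because transposition is a contravariant functor commuting with $R_1$, and because a direct $2\times 2$ computation shows $s_{n+1}^T = s_{n+1}^{-1}$. Naturality in $V$ is inherited from naturality of the duality pairing. Applying this isomorphism twice gives $\mathbb{D} E \mathbb{D} \cong E \mathbb{D}^2 \cong E$.

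The main obstacle is the bookkeeping for $E$: one must simultaneously handle the fact that the contravariant dual swaps the direction of the structure maps (replacing $\alpha_n$ by $(\alpha_n^{-1})^T$), reindexes through the $R_1$-summand rather than $R_0$, and introduces the $s_{n+1}^{-1}$ twist, yet the self-inverse nature of $s_{n+1}$ under transposition makes the two twists cancel cleanly. The identification $R_0(V_{n+1}') = R_0(V_{n+1})'$ used to define $\mathbb{D}$ is a special case ($i=0$) of the weight-space duality argument, so the same mechanism that defines $\mathbb{D}$ is what yields the isomorphism for $E$.
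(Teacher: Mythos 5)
Your proposal is correct. For the $F$ half you argue exactly as the paper does: $\mathbb{D}$ is a tensor functor, $St$ is self-dual under contravariant duality, and one checks compatibility with the gluing maps $\iota_n\otimes\alpha_n$ -- the paper's one-liner ``$(V_n'\otimes\ff^n)'\simeq V_n\otimes\ff^n$, and unwinding identifications $(\alpha_n'\otimes\iota_n)'=\alpha_n\otimes\iota_n$'' packs this same content. For the $E$ half your route genuinely diverges from what the paper writes down. The paper remarks that the $E$-isomorphism ``can be checked directly or follows from the adjunction theorem between $F$ with $E$ below, and the fact that $\mathbb{D}$ is an involutive functor,'' and then takes the second route: once $\mathbb{D}F\mathbb{D}\simeq F$ is in hand, Theorem \ref{Another-Adjoint-Pair} gives $F\dashv E$, and a contravariant equivalence $\mathbb{D}$ carries the adjunction $(E,F)$ to $(\mathbb{D}F\mathbb{D},\mathbb{D}E\mathbb{D})$, whence uniqueness of adjoints yields $\mathbb{D}E\mathbb{D}\simeq E$. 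You instead supply the promised-but-omitted direct check: you observe that because the relevant $G_1\subset G_{n+1}$ consists of symmetric matrices, the contravariant pairing respects the $G_1$-weight decomposition $R=\bigoplus R_i$, giving natural isomorphisms $R_1(V_{n+1})'\simeq R_1(V_{n+1}')$, and you verify compatibility with the twisted gluing maps using $s_{n+1}^T=s_{n+1}^{-1}$. Both routes are valid; the direct check you give is self-contained and avoids any appearance of forward reference (it does not invoke Theorem \ref{Another-Adjoint-Pair}, which itself uses the $F$ half of this lemma), while the paper's route is shorter once the adjunction machinery is available. The weight-pairing observation you isolate -- that contravariant duality preserves rather than negates the $G_1$-weight -- is the same mechanism that justifies the paper's identification $R_0(V_{n+1}')=R_0(V_{n+1})'$ in the definition of $\mathbb{D}$, which you correctly flag.
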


\begin{proof}
We prove $\mathbb{D}\circ F \circ \mathbb{D} \simeq F$.  The other isomorphism can be checked directly or follows from the adjunction theorem between $F$ with $E$ below, and the fact that $\mathbb{D}$ is an involutive functor.  For $V_n\in\M_n$ there is an isomorphism $(V_n'\otimes \ff^n)'\simeq V_n\otimes \ff^n$.  Moreover, unwinding identifications one has $(\alpha_n'\otimes \iota_n)'=\alpha_n\otimes\iota_n$.  Therefore $\mathbb{D}\circ F \circ \mathbb{D} \simeq F$.
\end{proof}

\begin{theorem}
\label{Another-Adjoint-Pair} The functor $F:\M \rightarrow \M$ is
isomorphic to a left adjoint to $E:\M \rightarrow \M$
\end{theorem}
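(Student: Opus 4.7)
The plan is to deduce this adjunction from Theorem \ref{AdjointPair} ($E$ is left adjoint to $F$) together with Lemma \ref{Commutativity} and the fact that $\mathbb{D}$ is an involutive contravariant equivalence. The key mechanism is the standard fact that conjugating a right adjoint by a contravariant equivalence produces a left adjoint: if $\mathbb{D}\circ E\circ \mathbb{D}\simeq E$ and $\mathbb{D}\circ F\circ \mathbb{D}\simeq F$, then the bi-adjointness of $(E,F)$ follows formally from the one-sided adjointness of $(E,F)$.

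Concretely, I would assemble the following chain of natural isomorphisms for $V,W\in \M$:
\begin{align*}
\mathrm{Hom}_{\M}(F(V),W)
&\cong \mathrm{Hom}_{\M}(\mathbb{D}(W),\mathbb{D}(F(V))) \\
&\cong \mathrm{Hom}_{\M}(\mathbb{D}(W),F(\mathbb{D}(V))) \\
&\cong \mathrm{Hom}_{\M}(E(\mathbb{D}(W)),\mathbb{D}(V)) \\
&\cong \mathrm{Hom}_{\M}(\mathbb{D}(E(W)),\mathbb{D}(V)) \\
&\cong \mathrm{Hom}_{\M}(V,E(W)).
\end{align*}
The first and last isomorphisms use that $\mathbb{D}\circ\mathbb{D}\simeq Id$ makes $\mathbb{D}$ a contravariant equivalence. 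The second and fourth use Lemma \ref{Commutativity} (rephrased as $\mathbb{D}\circ F\simeq F\circ \mathbb{D}$ and $\mathbb{D}\circ E\simeq E\circ \mathbb{D}$, obtained by precomposing with $\mathbb{D}$). The third is precisely the $(E,F)$-adjunction of Theorem \ref{AdjointPair}.

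The only thing to check, and the main (mild) obstacle, is naturality: each step in the chain is natural in both $V$ and $W$, and the composite therefore defines a natural isomorphism exhibiting $F$ as left adjoint to $E$. Since Lemma \ref{Commutativity} provides natural isomorphisms of functors, and the adjunction of Theorem \ref{AdjointPair} is natural, there is nothing nontrivial to verify beyond composing the naturalities. No further explicit construction of unit or counit is required at this stage.
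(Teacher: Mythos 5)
Your chain of natural isomorphisms is exactly the one the paper uses, with the same five steps justified the same way (duality, Lemma \ref{Commutativity}, the adjunction of Theorem \ref{AdjointPair}, Lemma \ref{Commutativity}, duality). The approach and the argument match the paper's proof.
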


\begin{proof}
We need to construct a natural isomorphism $Hom_{\M}(F(V),W)\simeq
Hom_{\M}(V,E(W))$ for $V,W \in \M$. It is constructed by the
composition of following natural isomorphisms,
\begin{align*}
Hom_{\M}(F(V),W) &\simeq Hom_{\M}(\mathbb{D}(W),\mathbb{D}(F(V)))\\
&\simeq Hom_{\M}(\mathbb{D}(W),F(\mathbb{D}(V)))\\
&\simeq Hom_{\M}(E(\mathbb{D}(W),\mathbb{D}(V)))\\
&\simeq Hom_{\M}(\mathbb{D}(E(W)),\mathbb{D}(V))\\
&\simeq Hom_{\M}(V,E(W)).                        \\
&\end{align*} Here the first and fifth isomorphisms follow from the
self-duality of $\mathbb{D}$, the second and the fourth isomorphisms
follow from Lemma \ref{Commutativity}, and the third isomorphism is
from Theorem \ref{AdjointPair}.\qedhere

\end{proof}

Fix $V=(V_n,\alpha_n),W=(W_n,\beta_n) \in \M$. It will be useful for
us later on to have an explicit description of the isomorphism
$$\chi_n:Hom_{G_n}(V_n, \ff^n \otimes  W_n) \rightarrow
Hom_{G_n}(R_1(V_{n+1}),W_n)$$ from (\ref{Adjoint of E and F}). By
the proof of Theorem \ref{AdjointPair}, $\chi_n$ is the composition
$\chi_n = \tau_n \circ \kappa_n^{-1}$, where
$$\tau_n:Hom_{G_{n+1}}(V_{n+1}, \ff^{n+1} \otimes I_{0}(W_n))
\rightarrow Hom_{G_n}(R_1(V_{n+1}),W_n)$$ is the inverse map of the
composition  of the first four isomorphisms in the proof of Theorem
\ref{AdjointPair} and $$ \kappa_n:Hom_{G_{n+1}}(V_{n+1}, \ff^{n+1}
\otimes I_{0}(W_n)) \rightarrow Hom_{G_n}(V_{n}, \ff^{n} \otimes
W_n)$$ is the composition of the last two isomorphisms.

We now give explicit formulas for these morphisms.   By Lemma
\ref{Pind decomp}, we can view $\ff^{n+1} \otimes I_0(W_n)$ as a
subspace of functions from $M_{n+1}$ to $W_n$. Let $x_i$
($i=1,\cdots,n$) be the standard basis of $\ff^{n+1}$. Given a
morphism $g:V_{n+1}\rightarrow \ff^{n+1}\otimes I_0(W_n)$ and  $v\in
V_{n+1}$, write $g(v)=\sum_{i=1}^{n+1}x_i\otimes g_i$, where $g_i\in
I_0(W_n)$. We can view  $g_i$ as functions from $M_{n,n+1}$ to
$W_n$. Then given any matrix $A=(a_{ij} ) \in M_{n+1}$,
$$g(v)(A)=(\sum_{i=1}^{n+1}x_i\otimes
g_i)(A)=\sum a_{n+1,i}g_i(A_{n,n+1}),$$ where $A_{n,n+1}$ is the $n
\times (n+1)$ principal submatrix of $A$. Recall that  $I_{n,n+1}$
is the $n\times (n+1)$ matrix with ones on the main diagonal and
zeros elsewhere. Then we have the following lemma.

\begin{lemma} \label{Lemma for tau and kappa}
Given a morphism $g:V_{n+1}\rightarrow \ff^{n+1}\otimes I_0(W_n)$
and  $v\in V_{n+1}$, write $g(v)=\sum_{i=1}^{n+1}x_i\otimes g_i$ as
above.
\begin{enumerate}
\item If $v\in R_1(V_{n+1})$, then
$\tau_n(g)(v)=g_{n+1}(I_{n,n+1})$.
\item If $v\in R_0(V_{n+1})$, then
$\kappa_n(g)(\alpha_n(v))=\sum_{i=1}^{n}x_i\otimes g_i(I_{n,n+1})$.
\end{enumerate}
\end{lemma}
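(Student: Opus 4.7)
The plan is to unwind the explicit isomorphisms that compose $\tau_n$ and $\kappa_n$, tracking a representative element $g:V_{n+1}\to\ff^{n+1}\otimes I_0(W_n)$ written in the form $g(v)=\sum_i x_i\otimes g_i$ through each step. Both parts then reduce to short computations using the explicit form of Frobenius reciprocity and of the counit $R_0 I_0\to\mathrm{Id}$ recorded in the proof of Corollary \ref{poly ind inverse}.

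For part (1), I would recognize $\tau_n$ as the inverse of the composition of the first four isomorphisms in the proof of Theorem \ref{AdjointPair}: the inclusion $\ff^{n+1}\otimes I_0(W_n)\hookrightarrow S(\ff^{n+1})\otimes I_0(W_n)\simeq Pind_n^{n+1}W_n$ supplied by Lemma \ref{Pind decomp}, followed by Frobenius reciprocity, followed by restriction to $R_1(V_{n+1})\subset R(V_{n+1})$. The explicit form of Frobenius reciprocity has already been recorded in the proof of Corollary \ref{poly ind inverse}: it sends $f:V_{n+1}\to Pind_n^{n+1}W_n$ to the map $v\mapsto f(v)(I_{n+1})$. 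Plugging $A=I_{n+1}$ into the formula $g(v)(A)=\sum_{i=1}^{n+1}a_{n+1,i}g_i(A_{n,n+1})$ kills every term except $i=n+1$, since $(I_{n+1})_{n+1,i}=\delta_{i,n+1}$, and the remaining principal submatrix $(I_{n+1})_{n,n+1}$ is precisely $I_{n,n+1}$; this leaves $g_{n+1}(I_{n,n+1})$, as claimed.

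For part (2), $\kappa_n$ is the composition of applying $R_0$ on $\mathrm{Hom}$ sets (using that $R_0$ is an equivalence for $n\geq k$) followed by the counit $\mathrm{ev}_{I_{n,n+1}}:R_0 I_0(W_n)\to W_n$ from Corollary \ref{poly ind inverse}. The key observation is that for $v\in R_0(V_{n+1})$, the $G_{n+1}$-equivariance of $g$ forces $g(v)\in(\ff^{n+1}\otimes I_0(W_n))^{G_1}$. Decomposing $\ff^{n+1}=\ff^n\oplus\ff x_{n+1}$ as $G_1$-modules, with $\ff x_{n+1}$ of weight $1$ and $I_0(W_n)$ polynomial (hence supported in non-negative $G_1$-weights), one sees that $(\ff^{n+1}\otimes I_0(W_n))^{G_1}=\ff^n\otimes R_0(I_0(W_n))$; in particular the top component $g_{n+1}$ must vanish for such $v$. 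Applying the counit termwise to the remaining sum then yields $\kappa_n(g)(\alpha_n(v))=\sum_{i=1}^n x_i\otimes g_i(I_{n,n+1})$.

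The main obstacle will be purely bookkeeping: one must keep track of how the $G_n\times G_{n+1}$ actions translate between $\OO(M_{n+1})$ and its factorization $S(\ff^{n+1})\otimes\OO(M_{n,n+1})$ from Lemma \ref{Pind decomp}, and verify that the given evaluation formula $g(v)(A)=\sum a_{n+1,i}g_i(A_{n,n+1})$ correctly encodes the embedding $\ff^{n+1}\otimes I_0(W_n)\hookrightarrow Pind_n^{n+1}W_n$ under these identifications. Once those conventions are pinned down, both (1) and (2) collapse to one-line substitutions.
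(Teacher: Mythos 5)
Your proposal is correct and follows essentially the same route as the paper: for (1) you unwind Frobenius reciprocity as evaluation at $I_{n+1}$ and substitute into $g(v)(A)=\sum a_{n+1,i}g_i(A_{n,n+1})$, and for (2) you trace the last two isomorphisms through the canonical identification of $R_0(\ff^{n+1}\otimes I_0(W_n))$ with $\ff^n\otimes R_0(I_0(W_n))$ and apply the counit $\mathrm{ev}_{I_{n,n+1}}$. The only cosmetic difference is that the paper writes the identification in (2) as $\pi_n\otimes\mathrm{ev}_{I_{n,n+1}}$ where $\pi_n$ projects away the $x_{n+1}$ slot, whereas you observe directly that $G_1$-invariance of $g(v)$ forces $g_{n+1}=0$; these are equivalent, and your observation is in fact the reason the projection $\pi_n$ is harmless.
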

\begin{proof}

Note that $\tau_n$ is defined via Frobenius reciprocity, and so
$\tau_n(g)(v)=g(v)(I_{n+1,n+1})$.  Therefore
$\tau_n(g)(v)=(\sum_{i=1}^{n+1}x_i\otimes g_i)(I_{n+1,n+1})$, and
the first desired formula follows. For the second formula, by the
definition of $\kappa_n$, we have
\begin{align*}
\kappa_n(g)(\alpha_n(v)) &= (\pi_n\otimes ev_{I_{n,n+1}})(g(v)) \\
&= (\pi_n\otimes ev_{I_{n,n+1}})(\sum_{i=1}^{n+1}x_i\otimes g_i) \\
&= \sum_{i=1}^{n}x_i\otimes g_i(I_{n,n+1}).
\end{align*}
where $\pi_n$ is the natural $G_n$-equivariant projection
$\ff^{n+1}\to \ff^{n}$.
\end{proof}

%\subsection{Endomorphisms on $E$ and $F$ }

\subsubsection{An endomorphism on $E$}
 In this section we construct an endomorphism $X$ on
$E$, i.e. a natural transformation $ X :E \rightarrow E$. This will
be used below to decompose $E$ into sub-functors $E_i$.

We consider the embedding $U(\mathfrak{gl}_n) \subset
U(\mathfrak{gl}_{n+1})$, analogous to the embedding of $G_n \subset
G_{n+1}$ defined above.  The Levi subgroup $G_n \times G_1$ of
$G_{n+1}$ acts on $U(\mathfrak{gl}_{n+1})$, by the restriction of
the adjoint action of $G_{n+1}$ on $U(\mathfrak{gl}_{n+1})$.

Set $$X_n=\sum_{i=1} ^{n} x_{n+1, i} x_{i, n+1} -n,$$ an element in
$U(\mathfrak{gl}_{n+1})$.

\begin{lemma}
\label{LemmaEpsilon} The element $X_n$ commutes with the adjoint
action of $G_n \times G_1$ on $ U(\mathfrak{gl}_{n+1})$, i.e.
$$
X_n \in U(\mathfrak{gl}_{n+1})^{G_n \times G_1}.
$$
\end{lemma}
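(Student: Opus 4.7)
The plan is to verify invariance at the Lie algebra level and then invoke connectedness. Since $G_n \times G_1$ is a connected algebraic group, it suffices to show that $X_n$ commutes with the Lie subalgebra $\mathfrak{gl}_n \oplus \mathfrak{gl}_1 \subset \mathfrak{gl}_{n+1}$, i.e.\ that $[x_{kl}, X_n] = 0$ for $1 \leq k,l \leq n$ and $[x_{n+1,n+1}, X_n] = 0$. The scalar $-n$ is trivially invariant, so everything reduces to showing that $S_n := \sum_{i=1}^n x_{n+1,i}\, x_{i,n+1}$ is central for these generators.

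For the $G_1$-invariance, I would use the standard commutator relation $[x_{ab}, x_{cd}] = \delta_{bc}x_{ad} - \delta_{ad}x_{cb}$ to compute
\[
[x_{n+1,n+1}, x_{n+1,i}] = x_{n+1,i}, \qquad [x_{n+1,n+1}, x_{i,n+1}] = -x_{i,n+1},
\]
for $1 \leq i \leq n$; applying the Leibniz rule term-by-term shows that $[x_{n+1,n+1}, x_{n+1,i} x_{i,n+1}] = 0$. Conceptually, this is simply saying that $x_{n+1,i}$ and $x_{i,n+1}$ are weight vectors of opposite $G_1$-weight, so their product is weight zero.

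For the $G_n$-invariance, fix $k, l$ with $1 \leq k, l \leq n$ and compute $[x_{kl}, x_{n+1,i}] = -\delta_{ki}\, x_{n+1,l}$ and $[x_{kl}, x_{i,n+1}] = \delta_{li}\, x_{k,n+1}$. Leibniz then gives
\[
[x_{kl},\, x_{n+1,i}x_{i,n+1}] = -\delta_{ki}\, x_{n+1,l}\, x_{i,n+1} + \delta_{li}\, x_{n+1,i}\, x_{k,n+1}.
\]
Summing over $i$ collapses the two Kronecker deltas to the single terms $-x_{n+1,l} x_{k,n+1}$ and $+x_{n+1,l} x_{k,n+1}$, which cancel. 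Hence $[x_{kl}, S_n] = 0$.

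There is no real obstacle here; the only thing to be careful about is the order of factors when applying the Leibniz rule in the noncommutative algebra $U(\mathfrak{gl}_{n+1})$. Conceptually the argument is clean: under the adjoint $G_n$-action, $\{x_{i,n+1}\}$ span a copy of the natural module $\ff^n$ and $\{x_{n+1,i}\}$ span its dual, and $S_n$ is the image in $U(\mathfrak{gl}_{n+1})$ of the canonical element of $(\ff^n)^* \otimes \ff^n \cong \operatorname{End}(\ff^n)$ corresponding to the identity. This perspective also explains, without calculation, the $G_1$-invariance: $\ff^n$ and its dual carry opposite $G_1$-weights, so the contraction is $G_1$-invariant.
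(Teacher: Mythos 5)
Your bracket computations are all correct, but the opening reduction is where the argument breaks: you claim that because $G_n\times G_1$ is connected, it suffices to check that $X_n$ commutes with $\mathfrak{gl}_n\oplus\mathfrak{gl}_1$. Over a field of characteristic $p>0$ --- which is exactly the case this paper cares about, since $\ff$ has characteristic $p\geq 0$ and the interesting Kac--Moody algebra is $\widehat{\mathfrak{sl}}_p$ --- infinitesimal invariance does not imply group invariance, even for connected groups acting rationally. The standard counterexample is $\mathbb{G}_m$ acting on $\ff[x]$: the element $x^p$ is killed by the Euler derivation (Lie algebra invariance) but is not $\mathbb{G}_m$-invariant. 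In general the $\mathrm{Lie}(G)$-invariants of a rational $G$-module can be strictly larger than the $G$-invariants, so your Lie-algebra calculation by itself does not establish $X_n\in U(\mathfrak{gl}_{n+1})^{G_n\times G_1}$ when $p>0$.

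The good news is that your closing ``conceptual'' paragraph is actually the correct proof and works in every characteristic, because it is a group-level argument: under conjugation by $g\in G_n$ one has $g\,x_{i,n+1}\,g^{-1}=\sum_j g_{ji}x_{j,n+1}$ and $g\,x_{n+1,i}\,g^{-1}=\sum_j (g^{-1})_{ij}x_{n+1,j}$, so the span of the $x_{i,n+1}$ is the natural module, the span of the $x_{n+1,i}$ is its dual, and $S_n=\sum_i x_{n+1,i}x_{i,n+1}$ is the image of the canonical ($GL_n$-invariant) element of $(\ff^n)^*\otimes\ff^n$; conjugation by $\mathrm{diag}(1,\dots,1,z)\in G_1$ scales the two factors by $z$ and $z^{-1}$, giving $G_1$-invariance. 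Promote that paragraph to the proof and drop the connectedness reduction. For comparison, the paper proceeds differently: it writes $C_{n+1}-C_n=2X_n+\bigl(\sum_{i=1}^n x_{i,i}\bigr)-nx_{n+1,n+1}+x_{n+1,n+1}^2+2n$ and observes that $C_{n+1}-C_n$ and the remaining terms are manifestly invariant under conjugation by $G_n\times G_1$ (this uses $p\neq 2$ to solve for $X_n$). Both routes are group-theoretic and characteristic-free; the Casimir identity has the side benefit of being reused later (Equation (\ref{EqX}) in Lemma \ref{LemmaDecomposition}) to compute the eigenvalues of $X$ on Weyl filtrations, whereas your direct argument is more self-contained.
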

\begin{proof}
Recall the Casimir element defined in Equation (\ref{Casimir}).  We compute:
\begin{eqnarray*}
C_{n+1}-C_n &=& \sum_{i=1} ^n x_{n+1 ,i} x_{i, n+1} + x_{i,
n+1}x_{n+1, i} + x_{n+1,n+1} ^2 \\ &=&\ 2X_n + (\sum_{i=1}^n
x_{i,i}) -nx_{n+1,n+1}+x_{n+1,n+1}^2+2n.
\end{eqnarray*}
Since $C_{n+1}-C_n$ and $(\sum_{i=1}^n x_{i,i})
-nx_{n+1,n+1}+x_{n+1,n+1}^2+2n$ commute with $G_n \times G_1$, the
result follows.
\end{proof}

By the above lemma, the action of $X_n$ on a representation $V_{n+1}
\in \M_{n+1}$ defines an endomorphism of the restriction functors
$R_i: \M_{n+1}\rightarrow \M_n$ for any $i \in \mathbb{Z}$.

\begin{proposition}
\label{PropactonE} Let $V=(V_n,\alpha_n) \in \M$.  The maps
$$X_n:R_1(V_{n+1}) \rightarrow R_1(V_{n+1})$$ glue to define a
morphism
$$
X(V):E(V) \rightarrow E(V),
$$
given by $X(V)=(X_n|_{R_1(V_{n+1})})$, so that $X :E \rightarrow E$
is a natural transformation.
\end{proposition}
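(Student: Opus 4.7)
The plan is to verify two things: first, that the operators $X_n|_{R_1(V_{n+1})}$ are compatible with the structure maps $\tilde{\alpha}_n = R_1(\alpha_{n+1}) \circ s_{n+1}^{-1}$, so that they assemble into a morphism $X(V) : E(V) \to E(V)$ in $\M$; second, that this assignment is natural in $V$. Naturality is immediate: if $f = (f_n) : V \to W$ is a morphism in $\M$, then $f_{n+1}$ is $G_{n+1}$-equivariant and therefore commutes with the action of $X_n \in U(\mathfrak{gl}_{n+1})$, yielding $R_1(f_{n+1}) \circ X_n = X_n \circ R_1(f_{n+1})$ on $R_1(V_{n+1})$. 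So the entire task is to establish the commutativity
$$\tilde{\alpha}_n \circ X_{n+1}\bigl|_{R_0 R_1(V_{n+2})} \;=\; X_n\bigl|_{R_1(V_{n+1})} \circ \tilde{\alpha}_n.$$

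The strategy is to reduce this to an identity in $U(\mathfrak{gl}_{n+2})$. Since $\alpha_{n+1}$ is $G_{n+1}$-equivariant and $X_n \in U(\mathfrak{gl}_{n+1})$, passing $X_n$ through $R_1(\alpha_{n+1})$ reduces the desired commutativity to showing
$$s_{n+1}^{-1}\, X_{n+1}\, s_{n+1}\bigl|_{R_1 R_0(V_{n+2})} \;=\; X_n\bigl|_{R_1 R_0(V_{n+2})}.$$
The next step is a direct computation of the adjoint action of $s_{n+1}$ on each matrix unit in $X_{n+1}$, using the explicit form of $s_{n+1}$ in (\ref{matrix}). The two sign changes coming from the off-diagonal $\pm 1$ entries cancel when one multiplies paired units $x_{n+2,i}\, x_{i, n+2}$, and the computation gives
$$s_{n+1}^{-1}\, X_{n+1}\, s_{n+1} \;=\; X_n + x_{n+1, n+2}\, x_{n+2, n+1} - 1$$
as elements of $U(\mathfrak{gl}_{n+2})$.

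It remains to check that $x_{n+1, n+2}\, x_{n+2, n+1}$ acts as the identity on $R_1 R_0(V_{n+2})$. For this, I would use the commutator $[x_{n+1, n+2}, x_{n+2, n+1}] = x_{n+1, n+1} - x_{n+2, n+2}$ together with the polynomiality of $V_{n+2}$. A vector $v \in R_1 R_0(V_{n+2})$ has $T_{n+2}$-weight $\lambda$ with $\lambda_{n+1} = 1$ and $\lambda_{n+2} = 0$; since $x_{n+1, n+2}$ raises $\lambda_{n+1}$ by one and lowers $\lambda_{n+2}$ by one, and no weight of a polynomial representation has negative entries, $x_{n+1, n+2} v = 0$. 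Hence $x_{n+1, n+2}\, x_{n+2, n+1} v = [x_{n+1, n+2}, x_{n+2, n+1}] v = (\lambda_{n+1} - \lambda_{n+2}) v = v$, as required.

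The main obstacle I expect is the bookkeeping in the adjoint calculation of $s_{n+1}^{-1} X_{n+1} s_{n+1}$: one must track which matrix units receive a sign from the off-diagonal $\pm 1$ entries of $s_{n+1}$, and one must clearly distinguish the two $G_1$-factors inside $G_{n+2}$ (at positions $n+1$ and $n+2$) whose weight decompositions define $R_0 R_1$ and $R_1 R_0$ as subspaces of $V_{n+2}$. Once the $U(\mathfrak{gl}_{n+2})$-identity is in hand, the polynomial-weight constraint and the $G_{n+1}$-equivariance of $\alpha_{n+1}$ finish the argument.
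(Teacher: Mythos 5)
Your proof is correct and follows essentially the same route as the paper's: you reduce the compatibility of $X_n$ with the gluing data $\tilde\alpha_n = R_1(\alpha_{n+1})\circ s_{n+1}^{-1}$ to the identity $s_{n+1}^{-1} X_{n+1} s_{n+1} = X_n + x_{n+1,n+2}\,x_{n+2,n+1} - 1$ in $U(\mathfrak{gl}_{n+2})$, and then show the correction term acts as the identity on $R_1 R_0(V_{n+2})$ by a polynomial-weight argument (which is exactly the paper's Lemma~\ref{RepLem1}). The paper organizes the same content as a three-square diagram chase (outer square commutes by naturality, left square by the explicit $s_n$-conjugation, hence right square), but the computational core and the weight argument are identical to yours.
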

\begin{proof}
Consider the following diagram,
$$
\xymatrix{ R_1(R_0(V_{n+1})) \ar[r]^{s_n} \ar[d]^{X_{n-1}} &
R_0(R_1(V_{n+1})) \ar[r]^>>>>>{\tilde{\alpha}_{n-1}}
\ar[d]^{X_n} & R_1(V_n) \ar[d]^{X_{n-1}} \\
R_1(R_0(V_{n+1})) \ar[r]^{s_n} & R_0(R_1(V_{n+1}))
\ar[r]^>>>>{\tilde{\alpha}_{n-1}} & R_1(V_n)}
$$
We want to show that the right square commutes for $n \gg 0$.  First
note that the fact that the outer square commutes follows since
$X_{n-1}$ is a natural transformation from $R_1$ to $R_1$ and by
definition: $\tilde{\alpha}_{n-1}=R_1(\alpha_n)\circ s_n^{-1}$.  The
following computation shows that the left square commutes:
\begin{eqnarray*}
(s_n ^{-1}\circ X_n \circ s_n )|_{R_1 \circ R_0(V_{n+1})} &=&
(\sum_{i=1} ^{n-1} x_{n,i}x_{i,n}+x_{n ,n+1}x_{n+1, n}-n)|_{R_1
\circ R_0(V_{n+1})} \\  &=& (X_{n-1}+x_{n, n+1}x_{n+1,
n}-1)|_{R_1\circ R_0(V_{n+1})} \\ &=& X_{n-1}|_{R_1\circ
R_0(V_{n+1})}.
\end{eqnarray*}
The last equality follows from Lemma \ref{RepLem1}.  Therefore the
right square commutes, which shows that $X(V) \in
Hom_{\M}(E(V),E(V))$.

Now suppose $V,W \in \M$ and $f \in Hom_{\M}(V,W)$.  Since $X_n$
acts on $R_1$, it follows that $X(W) \circ E(f) = E(f) \circ X(V)$.
Therefore $X:E \rightarrow E$ is a natural transformation.
\end{proof}

\subsubsection{An endomorphism of $F$}
We now construct an explicit natural transformation $Y$ of $F$,
related to $X:E \rightarrow E$ by adjunction.  This will be used
below to decompose $F$ into subfunctors $F_i$.

Fix $V,W \in \M$.  Let $h(X)$ be the morphism induced from
$X(V):E(V) \rightarrow E(V)$ by the functor $Hom_{\M}(\cdot,W)$.
Thus,
\begin{equation}
\label{Eq_hX} h(X):Hom_{\M}(E(V),W) \rightarrow Hom_{\M}(E(V),W).
\end{equation}
By adjunction (see Theorem \ref{AdjointPair}) we obtain an
endomorphism $h(X)^{\vee}$ of $Hom_{\M}(V,F(W))$ such that the
following diagram commutes:
\begin{equation*}
\xymatrix{ Hom_{\M}(V,F(W)) \ar[r]^{\chi}
\ar[d]_{h(X)^{\vee}} & Hom_{\M}(E(V),W) \ar[d]^{h({X})} \\
Hom_{\M}(V,F(W)) \ar[r]^{\chi} & Hom_{\M}(E(V),W) }
\end{equation*}
The map $\chi$ is shorthand for $(\chi_n(V,W))_{n=0}^{\infty}$ (see
Equation (\ref{Adjoint of E and F})). By Yoneda's Lemma we obtain a
natural transformation $X^{\vee}:F \rightarrow F$.

To describe $X^{\vee}$ more explicitly we introduce the following
explicitly defined natural transformation on $F$. Let $Y_n \in
U(\g_n)\otimes U(\g_n)$ be
$$
Y_n=\sum_{1 \leq i,j \leq n}x_{i,j} \otimes x_{j,i}
$$
and define $Y:F \rightarrow F$ by $Y=(Y_n)$.

\begin{proposition}
\label{Phi nat trans} The natural transformation $Y:F \rightarrow F$
is well-defined.
\end{proposition}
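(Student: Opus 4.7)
The plan is to verify three things: (i) each $Y_n$ acts as a $G_n$-module endomorphism of $\ff^n\otimes V_n$; (ii) the sequence $(Y_n)$ commutes with the structure maps $\iota_n\otimes\alpha_n$ of $F(V)$; (iii) the assignment $V\mapsto Y(V)$ is natural in $V$.

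For (i), I will use the identity
$$2Y_n=\Delta(C_n)-C_n\otimes 1-1\otimes C_n$$
in $U(\g_n)\otimes U(\g_n)$, where $\Delta$ is the standard coproduct and $C_n$ is the Casimir of \eqref{Casimir}; this is a direct expansion of $\Delta\bigl(\sum_{i,j}x_{ij}x_{ji}\bigr)$. Since $C_n\in Z(\g_n)$, each summand on the right commutes with the diagonal $U(\g_n)$-action on $\ff^n\otimes V_n$, so $Y_n$ is $G_n$-equivariant. The assumption $p\neq 2$ makes the division by two legitimate.

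Step (ii) is the main obstacle. I will write
$$Y_{n+1}-Y_n=\sum_{i=1}^n\bigl(x_{i,n+1}\otimes x_{n+1,i}+x_{n+1,i}\otimes x_{i,n+1}\bigr)+x_{n+1,n+1}\otimes x_{n+1,n+1}$$
and evaluate on a pure tensor $v_i\otimes w\in\mathrm{span}(v_1,\dots,v_n)\otimes R_0(V_{n+1})\cong R_0(\ff^{n+1}\otimes V_{n+1})$ (the canonical identification being available because $R_0$ preserves tensor products of polynomial modules). Each term on the right kills $v_i\otimes w$: for $i\le n$ we have $x_{j,n+1}v_i=0$ and $x_{n+1,n+1}v_i=0$, while $x_{j,n+1}w=0$ because $w$ has $G_1$-weight $0$ and $x_{j,n+1}$ would lower that weight to $-1$, which does not occur in any polynomial representation of $G_{n+1}$. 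Consequently $Y_{n+1}(v_i\otimes w)=\sum_{k=1}^n v_k\otimes x_{i,k}w$, and applying $\iota_n\otimes\alpha_n$ together with the $G_n$-equivariance of $\alpha_n$ (which commutes with $x_{i,k}\in U(\g_n)$) produces $Y_n(v_i\otimes\alpha_n(w))$. This yields the commutative square asserting that $Y(V)\in\mathrm{End}_{\M}(F(V))$.

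Naturality (iii) is then immediate: for a morphism $f=(f_n):V\to W$, each $f_n$ commutes with $U(\g_n)$, hence with $Y_n$ acting on the tensor product, so $Y$ commutes with $F(f)=(\mathrm{Id}_{\ff^n}\otimes f_n)$. Combining (i)--(iii) gives the proposition.
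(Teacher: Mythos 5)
Your proof is correct and follows essentially the same route as the paper: the key step (ii) reproduces the paper's computation that the extra terms in $Y_{n+1}$ involving index $n+1$ vanish on $R_0(\ff^{n+1}\otimes V_{n+1})\cong \ff^n\otimes R_0(V_{n+1})$ because $V_{n+1}$ is a polynomial representation. Your step (i), verifying $G_n$-equivariance of $Y_n$ via the Casimir identity $2Y_n=\Delta(C_n)-C_n\otimes 1-1\otimes C_n$, is a useful explicit check that the paper leaves implicit (it surfaces the identity only later, in the proof of Lemma~\ref{LemmaDecomposition}).
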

\begin{proof}
  Let $V=(V_n,\alpha_n) \in \M$.
%The work is in showing that $Y(V)
%\in Hom_{\M}(F(V),F(V))$.
Recall that $F(V)=(\ff^n \otimes V_n,\hat{\alpha}_n)$, where
$\hat{\alpha}_n=\iota_n\otimes \alpha_n$.  We need to show that the
following diagram commutes for $n \gg 0$:
\begin{equation}
\label{Diagram9} \xymatrix{ R_0(\ff^{n+1} \otimes V_{n+1})
\ar[r]^{Y_{n+1}} \ar[d]_{\hat{\alpha}_n} & R_0(\ff^{n+1} \otimes
V_{n+1}) \ar[d]^{\hat{\alpha}_n} \\ \ff^n \otimes V_n \ar[r]^{Y_n} &
\ff^n \otimes V_n}
\end{equation}

Let $x \otimes v \in R_0(\ff^{n+1} \otimes V_{n+1})$.  Then on the
one hand,
$$
\hat{\alpha}_n(x \otimes v)=x \otimes \alpha_n(v),
$$
and therefore,
\begin{equation}
\label{Eqtn9} Y_n(\hat{\alpha}_n(x \otimes v))=\sum_{1 \leq i,j \leq
n}x_{i,j} \cdot x \otimes x_{j,i} \cdot \alpha_n(v).
\end{equation}
On the other hand,
$$
Y_{n+1}(x \otimes v)=\sum_{1 \leq i,j \leq n+1}x_{i,j} \cdot x
\otimes x_{j,i} \cdot v.
$$
Since $x \otimes v$ is of degree zero for the action of $G_1$, this
implies that $x_{i,n+1} \cdot x=0$ for $i=1,...,n+1$, and moreover
that,
$$
\sum_{1 \leq i\leq n+1}x_{n+1,i} \cdot x \otimes x_{i,n+1} \cdot
v=0.
$$
Therefore,
\begin{equation}
\label{Eqtn10} \hat{\alpha}_n \circ Y_{n+1}(x \otimes v)=\sum_{1
\leq i,j \leq n}x_{i,j} \cdot x \otimes \alpha_n(x_{j,i} \cdot v).
\end{equation}
Since $\alpha_n$ is a $G_n$ morphism, it commutes with $x_{j,i}$ for
$1 \leq i,j \leq n$, and so (\ref{Eqtn9}) agrees with
(\ref{Eqtn10}), and diagram (\ref{Diagram9}) commutes.

It is trivial to show that $Y$ is compatible with morphisms $f:V
\rightarrow W$ in $\M$.\qedhere

\end{proof}

Fix $V,W \in \M$.  Let
\begin{equation}
\label{EqHdot} h(Y)^{\circ}:Hom_{\M}(V,F(W)) \rightarrow
Hom_{\M}(V,F(W))
\end{equation}
be the map induced by applying the functor $Hom_{\PP}(V,\cdot)$ to
the morphism $Y(W):F(W) \rightarrow F(W)$.

\begin{theorem}\label{Compatibility-Of-Endomorphisms}
In $End(F)$, $X^{\vee}=Y$.
\end{theorem}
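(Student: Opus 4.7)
The plan is to use Yoneda's lemma to reduce the identity to a single equation on morphism spaces. By construction, $X^\vee$ is the unique endomorphism of $F$ satisfying $\chi(X^\vee(W)\circ g)=\chi(g)\circ X(V)$ for every $V,W\in \M$ and every $g\in \text{Hom}_\M(V,F(W))$, where $\chi$ is the adjunction isomorphism from Theorem \ref{AdjointPair}. Thus the theorem reduces to checking the analogous equation with $Y$ in place of $X^\vee$; at each level $n\gg 0$ this becomes the assertion
\[
\chi_n(Y_n\circ g_n)\;=\;\chi_n(g_n)\circ X_n\big|_{R_1(V_{n+1})}
\]
of $G_n$-equivariant maps $R_1(V_{n+1})\to W_n$.

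Since both sides are $G_n$-equivariant, I would first establish a vector-space spanning set for $R_1(V_{n+1})$. A PBW argument applied to a Weyl filtration of $V_{n+1}$ shows that $R_1(V_{n+1})$ is spanned by elements $x_{n+1,i}v'$ with $v'\in R_0(V_{n+1})$ and $i\in\{1,\dots,n\}$, so it suffices to evaluate both sides on such elements. Writing $g_n(\alpha_n(v'))=\sum_{j=1}^n x_j\otimes w_j$, the next step is to unwind $\chi_n=\tau_n\circ\kappa_n^{-1}$ via Lemma \ref{Lemma for tau and kappa} to obtain the explicit formula $\chi_n(g_n)(x_{n+1,i}v')=w_i$. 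The key ingredient here is that the right-translation action of $x_{n+1,i}\in\g_{n+1}$ on any element of $I_0(W_n)\subset W_n\otimes\OO(M_{n,n+1})$ vanishes upon evaluation at $I_{n,n+1}$, because the $(n+1)$-st column of $I_{n,n+1}$ is zero.

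Given this formula, the proof comes down to two short computations. On the left-hand side, applying the formula to $Y_n\circ g_n$ gives the $i$-th coefficient of $Y_n\cdot(\sum_k x_k\otimes w_k)=\sum_{k,j}x_j\otimes x_{k,j}w_k$, namely $\sum_k x_{k,i}w_k$. On the right-hand side, a commutator calculation in $U(\g_{n+1})$, using the vanishings $x_{j,n+1}v'=0$ and $x_{n+1,n+1}v'=0$ that hold for $v'\in R_0(V_{n+1})$, simplifies $X_n(x_{n+1,i}v')$ to $\sum_k x_{k,i}(x_{n+1,k}v')$. Applying $\chi_n(g_n)$ and using its $G_n$-equivariance then produces $\sum_k x_{k,i}w_k$, matching the left-hand side.

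The main obstacle is verifying the vanishing of the $x_{n+1,i}$-action on $I_0(W_n)$ at $I_{n,n+1}$, which requires careful tracking of the implicit lift of $g_n$ to a $G_{n+1}$-equivariant morphism $V_{n+1}\to\ff^{n+1}\otimes I_0(W_n)$ through the chain of isomorphisms in the proof of Theorem \ref{AdjointPair}. Once this vanishing and the spanning statement are in hand, the remaining algebra in $U(\g_{n+1})$ is routine.
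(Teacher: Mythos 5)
Your plan is sound and the computations it outlines are correct: the characterization of $X^{\vee}$ via $\chi(X^\vee(W)\circ g)=\chi(g)\circ X(V)$ is exactly how the paper defines $X^\vee$, the formula $\chi_n(g_n)(x_{n+1,i}v')=w_i$ does follow from Lemma \ref{Lemma for tau and kappa} (together with either your evaluation-vanishing observation or the weight argument of Lemma \ref{Last Lemma}), the identity $X_n(x_{n+1,i}v')=\sum_k x_{k,i}(x_{n+1,k}v')$ for $v'\in R_0(V_{n+1})$ is a correct commutator computation using $x_{k,n+1}v'=0$ and $x_{n+1,n+1}v'=0$, and both sides indeed come out to $\sum_k x_{k,i}w_k$. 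The route differs from the paper in organization: the paper keeps the factorization $\chi_n=\tau_n\circ\kappa_n^{-1}$ and checks two commuting squares separately, for an arbitrary element $v\in R_1(V_{n+1})$ and an arbitrary lift $g:V_{n+1}\to\ff^{n+1}\otimes I_0(W_n)$, with Lemma \ref{Last Lemma} doing the weight bookkeeping; you instead evaluate the composite $\chi_n$ directly on the spanning set $x_{n+1,i}\cdot R_0(V_{n+1})$, which compresses the two squares into one formula and turns the verification into a short calculation in $U(\g_{n+1})$. What your approach buys is brevity and a cleaner conceptual reason for the ``$-n$'' in $X_n$; what it costs is the extra spanning lemma, which the paper never needs.

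That spanning lemma is the one weak point of your write-up. The claim $R_1(V_{n+1})=\sum_{i=1}^{n}x_{n+1,i}\cdot R_0(V_{n+1})$ (for $n$ at least the degree) is true, but ``a PBW argument applied to a Weyl filtration'' does not literally prove it when $p>0$: Weyl modules are generated by their highest weight line over the hyperalgebra $Dist(G_{n+1})$, not over $U(\g_{n+1})$ (divided powers are needed), so one cannot argue by naive PBW generation. A correct and short fix: the assignment $V\mapsto R_1(V)/\sum_i x_{n+1,i}R_0(V)$ sends surjections of $G_{n+1}$-modules to surjections (since $R_0,R_1$ are exact and the maps $x_{n+1,i}$ are natural), so it suffices to check the vanishing on the projective generator $(\ff^{n+1})^{\otimes k}$ of $\M_{n+1}(k)$ for $n\geq k$; there a basis tensor of $R_1$ containing $v_{n+1}$ in one slot is obtained as $x_{n+1,a}$ applied to the tensor with $v_{n+1}$ replaced by $v_a$, where $a\leq n$ is chosen distinct from all other indices occurring (possible since $n\geq k$). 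With that substitution your argument is complete.
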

\begin{proof}
Let $V,W \in \M$.  By the definition of $X^{\vee}$, to show the
equality $X^{\vee}=Y$ it suffices to show that the following diagram
commutes:
\begin{equation}
\label{DiagramJK} \xymatrix{ Hom_{\M}(V,F(W)) \ar[r]^{\chi}
\ar[d]_{h(Y)^{\circ}} & Hom_{\M}(E(V),W) \ar[d]^{h({X})} \\
Hom_{\M}(V,F(W)) \ar[r]^{\chi} & Hom_{\M}(E(V),W) }
\end{equation}

To show that diagram (\ref{DiagramJK}) commutes we need to show that
for $n \gg 0$ the following diagram commutes,
$$
\xymatrix{ Hom_{G_n}(V_n,\ff^n \otimes W_n) \ar[r]^{\chi_n}
\ar[d]_{h(Y_{n})^{\circ}} & Hom_{G_n}(R_1V_{n+1},W_n) \ar[d]^{h(X_n)} \\
Hom_{G_n}(V_n,\ff^n \otimes W_n) \ar[r]^{\chi_n} &
Hom_{G_n}(R_1V_{n+1},W_n) }
$$
Since $\chi_n = \tau_n \circ \kappa_n^{-1}$, to check that the above
diagram commutes we check that the following diagram commutes:
$$
\xymatrix@1{ Hom_{G_n}(V_n,\ff^n \otimes W_n)
\ar[d]_{h(Y_n)^{\circ}} & Hom_{G_{n+1}}(V_{n+1}, \ff^{n+1} \otimes
I_0(W_n)) \ar[l]_>>>>>>{\kappa_n}\ar[r]^<<<<{\tau_n}
\ar[d]_{h(Y_{n+1})^{\circ}} & Hom_{G_n}(R_1V_{n+1},W_n)
\ar[d]_{h(X_{n})}
\\
Hom_{G_n}(V_n,\ff^n \otimes W_n) & Hom_{G_{n+1}}(V_{n+1}, \ff^{n+1}
\otimes I_{0}(W_n)) \ar[l]_>>>>>>{\kappa_n} \ar[r]^<<<<{\tau_n} &
Hom_{G_n}(R_1V_{n+1},W_n) }
$$
By Lemma \ref{Lemma for tau and kappa}, we have explicit formulas
for $\kappa_n$ and $\tau_n$, which we will now use.

First we check the right diagram commutes, i.e., we need to check
$\tau_n( Y_{n+1}\circ g)=\tau_n(g)\circ X_n$, for any morphism
$g:V_{n+1}\rightarrow \ff^{n+1}\otimes I_{0}(W_n)$.

Let $v\in R_1(V_{n+1})$ and set $g(v)=\sum_{i=1}^{n+1}x_i\otimes g_i
$. Then we have
 \begin{eqnarray*}
 (Y_{n+1}\circ g)(v)
 &=& Y_{n+1}(g(v))\\ &=&\sum_{k,l=1}^{n+1}\sum_{i=1}^{n+1}x_{k,l}\cdot x_i\otimes x_{l,k}\cdot g_i \\
&=& \sum_{i,k=1}^{n+1} x_k\otimes x_{i,k}\cdot g_i.
\end{eqnarray*}
By Lemma \ref{Lemma for tau and kappa},  $\tau_n( Y_{n+1}\circ
g)(v)=\sum_{i=1}^{n+1}(x_{i,n+1}\cdot g_i)(I_{n,n+1})$. Then by
Equation (\ref{Eqtn6}) of Lemma \ref{Last Lemma}, $\tau_n(
Y_{n+1}\circ g)(v)=\sum_{i=1}^{n} (x_{i,n+1}\cdot g_i)(I_{n,n+1}).$

Now we will compute $(\tau_n(g)\circ X_n)(v)=\tau_n(g)(X_n(v))$. For
this, first we look at $g(X_n(v))$. Since $X_n$ commutes with $G_n$
on $R_1(V_{n+1})$, then
\begin{eqnarray*}
 g(X_n(v))&=&X_n(g(v))\\
&=&\sum_{k=1}^{n}x_{n+1,k}(x_{k,n+1}(\sum_{i=1}^{n+1} x_i\otimes g_i))-ng(v)\\ &=& nx_{n+1}\otimes g_{n+1}+ \sum_{k=1}^{n}(x_k\otimes x_{n+1,k}\cdot g_{n+1}+x_{n+1}\otimes x_{k,n+1}\cdot g_k \\
& &+\sum_{i=1}^{n+1}x_i\otimes x_{n+1,k}\cdot(x_{k,n+1}\cdot
g_i))-ng(v)
\end{eqnarray*}
Applying Lemma \ref{Lemma for tau and kappa} again, we have
 $$\tau_n(g)(X_n(v))=\sum_{k=1}^n (x_{k,n+1}\cdot g_k)(I_{n,n+1})+(x_{n+1,k}\cdot (x_{k,n+1}\cdot g_{n+1}))(I_{n,n+1}).$$

By Equation (\ref{Eqtn7}) of Lemma \ref{Last Lemma}, we have
$\tau_n(g)(X_n(v))=\sum_{k=1}^n (x_{k,n+1}\cdot g_k)(I_{n,n+1})$.
Hence the equality $\tau_n( Y_{n+1}\circ g)=\tau_n(g)\circ X_n$
holds. This shows that the right diagram commutes.

Finally we check that the left diagram also commutes. We have to check
that $$(h(Y_n)\circ \kappa_n)(g)=(\kappa_n\circ h(Y_{n+1}))(g).$$
Since $Y_{n+1}$ gives a natural transformation on the tensor functor
$\ff^{n+1}\otimes \cdot$, we have the  following commutative
diagram:
$$
\xymatrix{\ff^{n+1}\otimes W_{n+1} \ar[r]^{Y_{n+1}}  \ar[d]^{1 \otimes \beta_n^{\vee}} &  \ff^{n+1}\otimes W_{n+1} \ar[d]^{1 \otimes \beta_n^{\vee}} \\
\ff^{n+1}\otimes I_0(W_n) \ar[r]^{Y_{n+1}} & \ff^{n+1}\otimes
I_0(W_n)}
$$where $\beta_n^{\vee}$ is the morphism induced from $\beta_n:R_0(W_{n+1})\to W_n$. Combined with the commutativity of diagram (\ref{Diagram9}) and Lemma \ref{Diagram10}, the commutativity of left diagram follows.
\end{proof}

\subsubsection{The functors $E_a$ and $F_a$}
Finally we are ready to define the family of functors $\{E_a,F_a
\}$. For a vector space $U$ over $\ff$, an operator $T: U
\rightarrow U$ and a scalar $a \in \ff$, we denote by $U[a]$ the
$a$-generalized eigenspace of $T$ on $U$, i.e.
$$
U[a]=\bigcup_{N>0} ker(T-a)^N.
$$
For a morphism $f:V \rightarrow V$, where $V \in \M$, we can also
define the notion of a generalized eigenspace.  Indeed, if $f=(f_n)$
and $V=(V_n,\alpha_n)$, then we set $V[a]=(V_n[a],\alpha_n)$, where
$V_n[a]$ is the $a$-generalized eigenspace of $f_n$ on $V_n$.  It is
straightforward to check that the morphisms $\alpha_n$ restrict to
give gluing data $R_0(V_{n+1}[a]) \rightarrow V_n[a]$.

\begin{lemma}\label{LocalFinite}
Let $V \in \M$, $f \in End_{\M}(V)$, and $a \in \ff$.  Then there
exists $N>0$ such that
$$
V[a]=ker(f-a)^N.
$$
\end{lemma}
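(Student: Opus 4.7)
My plan is to reduce the statement to a finite-dimensional assertion in each degree, using compactness of $V$ together with the equivalence $\Psi_n(k) : \M(k) \to \M_n(k)$ for $n \geq k$ established in Proposition \ref{Forgetful-Functor}.

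First, by Definition \ref{DefCatP}, compactness of $V$ means that the decomposition $V = \bigoplus_{k} V(k)$ has only finitely many nonzero summands; let $S$ denote the finite set of degrees at which $V$ is nonzero. For $k \neq k'$ and $n \gg 0$ one has $Hom_{G_n}(V_n(k), V_n(k'))=0$, because the center $Z_n$ acts by distinct scalars on polynomial modules of distinct degrees; passing to $\M$ this yields $Hom_{\M}(V(k), V(k'))=0$. Consequently $f$ is block-diagonal, $f = \bigoplus_{k \in S} f^{(k)}$ with $f^{(k)} \in End_{\M}(V(k))$, and both the generalized eigenspace $V[a]$ and any candidate $\ker(f-a)^N$ split as direct sums over $k \in S$. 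It thus suffices to exhibit, for each $k \in S$, an integer $N_k$ with $V(k)[a] = \ker(f^{(k)} - a)^{N_k}$; then $N = \max_{k \in S} N_k$ will satisfy the lemma.

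Fix $k \in S$ and any $n \geq k$. By Proposition \ref{Forgetful-Functor}, $\Psi_n(k)$ is an equivalence of categories, so $f^{(k)}$ corresponds under $\Psi_n(k)$ to an endomorphism $f_n \in End_{G_n}(V_n(k))$. Since $V_n(k)$ is a finite-dimensional $\ff$-vector space, elementary linear algebra gives $V_n(k)[a] = \ker(f_n - a)^{N_k}$ for any $N_k \geq \dim V_n(k)$. The definition of $V(k)[a]$ as $(V_m(k)[a], \alpha_m)$ recorded in the paragraph preceding the lemma ensures that $\Psi_n(k)$ sends $V(k)[a]$ to $V_n(k)[a]$; since $\Psi_n(k)$ is an equivalence of abelian categories, it also preserves kernels, so it sends $\ker(f^{(k)} - a)^{N_k}$ to $\ker(f_n - a)^{N_k}$. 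Transporting the finite-dimensional equality back through the equivalence yields $V(k)[a] = \ker(f^{(k)} - a)^{N_k}$ in $\M(k)$.

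There is no serious obstacle here. The only mild bookkeeping is verifying that $f$ is block-diagonal for the degree decomposition and that $\Psi_n(k)$ interacts correctly with kernels and with generalized eigenspaces, both of which are immediate from the central-character separation of polynomial modules and from standard properties of equivalences of abelian categories; the substantive content is simply the triviality of generalized eigenspace stabilization for a single endomorphism of a finite-dimensional vector space.
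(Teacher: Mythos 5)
Your proof is correct, but it takes a genuinely different route from the paper's. The paper argues intrinsically: it observes that any $V\in\M$ has a composition series of finite length, invokes Schur's lemma (Lemma~\ref{Schurslemma}) to conclude that $f$ acts by a scalar on each simple subquotient, and then deduces immediately that $(f-a)$ is nilpotent of degree at most the length of $V$ on the generalized $a$-eigenspace; the bound $N$ is thus the length of $V$. You instead reduce to linear algebra by projecting: after decomposing $f$ into its degree blocks via the vanishing of $Hom$ between distinct degrees, you push each block through the equivalence $\Psi_n(k)$ of Proposition~\ref{Forgetful-Functor} to land in finite-dimensional $G_n$-modules, take $N_k \geq \dim V_n(k)$, and pull the stabilization back. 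Both arguments hinge on compactness of $V$ to get finiteness, but yours bypasses Schur's lemma and replaces the composition-series argument with the (coarser) dimension bound coming from a single finite level. One small point you may wish to make explicit in the final transport step: you have a canonical inclusion $\ker(f^{(k)}-a)^{N_k}\hookrightarrow V(k)[a]$ of subobjects of $V(k)$, and since $\Psi_n(k)$ sends this inclusion to an isomorphism, the fact that equivalences reflect isomorphisms gives the claimed equality; stating this removes any ambiguity about what "transporting back" means for subobjects. As written, however, the argument is sound.
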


\begin{proof}
Any $V$ in $\M$ admits a composition series of finite length, and by
Lemma \ref{Schurslemma},  $f$ acts on each subquotient by some
scalar. Hence the lemma follows.
\end{proof}

\begin{definition}
For $V \in \M$ and $a \in \ff$ set $$E_a(V)=E(V)[a],$$ the
$a$-generalized eigenspace of $X(V):E(V) \rightarrow E(V)$.  This
defines a functor $$E_a: \M \rightarrow \M.$$
\end{definition}

It follows by Proposition \ref{PropactonE} that the functors $E_{a}$
are well-defined, and we now have a decomposition $E$ into
sub-functors:
$$
E=\bigoplus_{a \in \mathbb{\mathbb{F}}}E_a.
$$
We define sub-functors $F_a$ of $F$ completely analogously.
\begin{definition}
\label{TheFunctorFi} For $V \in \M$ and $a \in \ff$ set
$$F_a(V)=F(V)[a],$$ the $a$-generalized eigenspace of $Y:F(V)
\rightarrow F(V)$.  This defines a functor $$F_a: \M \rightarrow
\M.$$
\end{definition}

It follows by Proposition \ref{Phi nat trans} that the functors
$F_{a}$ are well-defined, and we now have a decomposition $F$ into
sub-functors:
$$
F=\bigoplus_{a \in \ff}F_a.
$$

\begin{proposition}\label{Sub-Adjoint-pair}
For every $a$, $(E_a,F_a)$ is an adjoint pairs of functors.
\end{proposition}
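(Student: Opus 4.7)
The plan is to derive the adjunction $(E_a, F_a)$ by restricting the adjunction $(E, F)$ from Theorem \ref{AdjointPair} to generalized eigenspaces, using the compatibility of $X$ and $Y$ under adjunction proved in Theorem \ref{Compatibility-Of-Endomorphisms}. Fix $V, W \in \M$. The goal is to produce a natural isomorphism
$$
\mathrm{Hom}_{\M}(E_a(V), W) \simeq \mathrm{Hom}_{\M}(V, F_a(W)).
$$

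First, I would show that the adjunction isomorphism $\chi: \mathrm{Hom}_{\M}(V, F(W)) \to \mathrm{Hom}_{\M}(E(V), W)$ carries the $a$-generalized eigenspace of $h(Y)^{\circ}$ bijectively onto the $a$-generalized eigenspace of $h(X)$. This is immediate from the commutative diagram \eqref{DiagramJK}, which gives $\chi \circ h(Y)^{\circ} = h(X) \circ \chi$, so $\chi$ intertwines $(h(Y)^{\circ} - a)^N$ with $(h(X) - a)^N$ for all $N$.

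The main work is then identifying these two generalized eigenspaces as the Hom-sets we want. For the $F$-side, a morphism $g : V \to F(W)$ satisfies $(h(Y)^{\circ} - a)^N g = Y(W)^N \circ g = 0$ (after expansion and applying binomial identities to $(Y(W)-a)^N$) iff the image of $g$ lies in $\ker(Y(W)-a)^N$. By Lemma \ref{LocalFinite} applied to $Y(W) \in \mathrm{End}_{\M}(F(W))$, this kernel coincides with $F_a(W)$ for $N$ sufficiently large, giving $\mathrm{Hom}_{\M}(V, F(W))[a] = \mathrm{Hom}_{\M}(V, F_a(W))$. For the $E$-side, the argument is dual but requires a small extra step. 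Write $E(V) = E_a(V) \oplus E'(V)$ with $E'(V) = \bigoplus_{b \neq a} E_b(V)$ (a finite sum since $V$ has finite composition length). On each summand $E_b(V)$ with $b \neq a$, we decompose $X(V) - a = (X(V)-b) + (b-a)$, where the first term is nilpotent by Lemma \ref{LocalFinite} and the second is a nonzero scalar, so $X(V) - a$ is invertible on $E'(V)$. Meanwhile $X(V) - a$ is nilpotent on $E_a(V)$, so for large $N$ we have $(X(V)-a)^N(E(V)) = E'(V)$. Therefore $f \circ (X(V)-a)^N = 0$ iff $f$ vanishes on $E'(V)$, iff $f$ factors through the summand $E_a(V)$. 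This yields $\mathrm{Hom}_{\M}(E(V), W)[a] = \mathrm{Hom}_{\M}(E_a(V), W)$.

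Combining these two identifications with the intertwining property of $\chi$ produces a natural isomorphism $\mathrm{Hom}_{\M}(E_a(V), W) \simeq \mathrm{Hom}_{\M}(V, F_a(W))$, and naturality in $V$ and $W$ is inherited from the naturality of $\chi$. The main obstacle, modest as it is, lies in the $E$-side eigenspace computation: one must verify that invertibility of $X(V) - a$ on $E'(V)$ forces $f$ to factor through $E_a(V)$, rather than simply constraining its image. This is handled by the decomposition argument above, which works because the direct sum $E = \bigoplus_a E_a$ has only finitely many nonzero summands on any given object of $\M$.
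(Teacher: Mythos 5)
Your proposal is correct and follows essentially the same route as the paper: both restrict the adjunction isomorphism $\chi$ of Theorem \ref{AdjointPair} using the compatibility $X^{\vee}=Y$ (Theorem \ref{Compatibility-Of-Endomorphisms}) and Lemma \ref{LocalFinite}, identifying $Hom_{\M}(V,F_a(W))$ with the kernel of $h((Y-a)^N)^{\circ}$ and $Hom_{\M}(E_a(V),W)$ with the kernel of $h((X-a)^N)$. Your explicit verification that $f\circ(X(V)-a)^N=0$ forces $f$ to factor through the summand $E_a(V)$ simply spells out the step the paper compresses into the remark that $E_a(V)$ is a direct summand of $E(V)$.
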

\begin{proof}
Given objects $V$ and $W$ in the category $\M$,  by the definition
of $E_a$ and $F_a$ and Lemma \ref{LocalFinite}, there exists a
positive integer $N$ such that,
$$E_a(V)=ker(X-a)^N:E(V)\rightarrow E(V)),$$
$$F_a(W)=ker(Y-a)^N:F(W)\rightarrow F(W) ).$$
 Recall that we defined morphisms $h$ and $h^0$ in (\ref{Eq_hX}) and (\ref{EqHdot}).  Consider the following diagram:

\begin{equation}
\label{DiagramJ} \xymatrix{
Hom_{\M}(V,F_a(W))\ar[r] \ar[d]_{h(ker(Y-a)^N)^{\circ}} & Hom_{\M}(E_{a}(V),W)\ar[d]^{h (ker(X-a)^N)}  \\
 Hom_{\M}(V,F(W)) \ar[r]^{\chi}
\ar[d]_{h((Y-a)^N)^{\circ}} & Hom_{\M}(E(V),W) \ar[d]^{h((X-a)^N)} \\
Hom_{\M}(V,F(W)) \ar[r]^{\chi} & Hom_{\M}(E(V),W) }
\end{equation}
To be precise, for example $h (ker(X-a)^N)$ is obtained by applying $Hom_\M(\cdot,W)$ to the morphism $ker(X-a)^{N}:E(V)\to E(V)$, and then restricting the resulting morphism to $Hom_\M(E_a(V),W)$. In the above diagram, by Theorem \ref{AdjointPair}, $\chi$ is an
isomorphism. By Theorem \ref{Compatibility-Of-Endomorphisms}, the
bottom square commutes. Since $E_a(V)$ is a direct summand of
$E(V)$, $h(ker(X-a)^N))$ is the kernel of $h((X-a)^N)$. So the
isomorphism $\chi$ induces  an isomorphism from $Hom(V,F_a(W))$ to
$Hom(E_a(V),W)$.
\end{proof}

\section{Categorifying the Fock space}
In this section we prove our main theorem.  First, in Section
\ref{Sec4.1}, we recall the definitions of the degenerate affine
Hecke algebra and Chuang and Rouquier's notion of categorification.
In Section \ref{Sec4.2} we define the data of a
$\g$-categorification on $\M$ and prove our main result, namely that
this data is indeed a $\g$-categorification which categorifies the
Fock space representation of $\g$.  By Chuang-Rouquier theory we
deduce derived equivalences between blocks of $\M$.  In Section
\ref{SectionCrystal}  we recover the crystal of Fock space from the
set of simple objects of $\M$.

\subsection{The definition of $\g$-categorification}
\label{Sec4.1} We recall Chuang and Rouquier's notion of
$\g$-categorification following \cite{R}.

\begin{definition}
Let $\bar{H}_n$ be the degenerate affine Hecke algebra of $GL_n$. As
an abelian group
$$
\bar{H}_n=\Z[y_1,...,y_n]\otimes \Z \mathfrak{S}_n.
$$
The algebra structure is defined as follows: $\Z[y_1,...,y_n]$ and
$\Z \mathfrak{S}_n$ are subalgebras, and the following relations
hold between the generators of these subalgebras:
$$
\tau_iy_j=y_j\tau_i \text{ if }|i-j| \geq 1
$$
and
\begin{equation}
\label{HeckeRelation} \tau_iy_{i+1}-y_i\tau_i=1
\end{equation}
(here $\tau_1,...,\tau_{n-1}$ are the simple transpositions of $\Z
\mathfrak{S}_n$).
\end{definition}

For an abelian $\ff$-linear category $\mathcal{V}$, let
$K(\mathcal{V})$ denote the \emph{complexified} Grothendieck group
of $\mathcal{V}$.  The equivalence class of an object $V \in
\mathcal{V}$ is denoted $[V]\in K(\mathcal{V})$, and given an exact
functor $F:\mathcal{V} \rightarrow \mathcal{V}$, $[F]:
K(\mathcal{V}) \rightarrow K(\mathcal{V})$ denotes the induced
linear operator.
\begin{definition}[Definition 5.29, \cite{R}]
\label{DefinitionCategorification} Let $\mathcal{V}$ be an abelian
$\ff$-linear category.  A $\g$-categorification on $\mathcal{V}$ is
the data of:
\begin{enumerate}
\item an adjoint pair $(E,F)$ of exact functors $\mathcal{V} \rightarrow \mathcal{V,}$
\item $X \in End(E)$ and $T \in End(E^2),$
\item a decomposition $\mathcal{V}=\bigoplus_{\omega \in P} \mathcal{V}_{\omega}$.
\end{enumerate}
Let $X^{\vee} \in End(F)$ be the endomorphism of $F$ induced by
adjunction.  Then given $i \in \ff$ let $E_i$ (resp. $F_i$) be the
generalized $i$-eigensubfunctor of $X$ (resp. $X^{\vee}$) acting on
$E$ (resp. $F$).  We assume that
\begin{enumerate}
\item[(4)] $E=\bigoplus_{i \in \Z/p\Z} E_i$,
\item[(5)] The action of $\{ [E_i],[F_i]\}_{i \in \Z/p\Z}$ on $K(\mathcal{V})$ gives rise to an integrable representation of $\g,$
\item[(6)] For all $i$, $E_i(\mathcal{V}_{\omega}) \subset \mathcal{V}_{\omega+\alpha_i}$ and $F_i(\mathcal{V}_{\omega}) \subset \mathcal{V}_{\omega-\alpha_i}$,
\item[(7)] $F$ is isomorphic to the left adjoint of $E$,
\item[(8)] The degenerate affine Hecke algebra $\bar{H}_n$ acts on $End(E^n)$ via
\begin{equation}
\label{EqXX} y_i \mapsto E^{n-i}XE^{i-1} \text{ for }1 \leq i \leq
n,
\end{equation}
and
\begin{equation}
\label{EqTT} \tau_i \mapsto E^{n-i-1}TE^{i-1} \text{ for }1 \leq i
\leq n-1.
\end{equation}
\end{enumerate}

\end{definition}

\begin{remark}
To clarify notation, the natural endomorphism $y_i$ of $E^n$ assigns
to $V \in \mathcal{V}$ an endomorphism of $E^n(V)$ as follows: apply
the functor $E^{n-i}$ to the morphism $$X(E^{i-1}(V)):E^i(V)
\rightarrow E^i(V).$$
\end{remark}

%\begin{remark}
%By construction $(E_i,F_i)$ is an adjoint pair of functors for every $i %\in \OO_{\ff}$.  Therefore we have the data of two morphisms $\eta_i:Id\rightarrow %F_iE_i$ and $\varepsilon_i:E_iF_i \rightarrow Id$ such that
%\begin{eqnarray*}
%(\varepsilon_iId)\circ(Id\eta_i)=Id \\ (Id\varepsilon_i)\circ(\eta_iId)=Id
%\end{eqnarray*}
% \end{remark}

\begin{remark}
\label{RmrkRuoq1} Rouquier defines a $2$-Kac Moody algebra
$\mathfrak{A}(\g)$ associated to $\g$, and shows that a
$\g$-categorification $\mathcal{V}$ is equivalent to a
$2$-representation of  $\mathfrak{A}(\g)$, i.e. a $2$-functor
$\mathcal{V}:\mathfrak{A}(\g) \rightarrow Cat$  (cf. Theorem 5.30 in
\cite{R}).
\end{remark}

\subsection{The $\g$-categorification on $\M$}
\label{Sec4.2} By now we've defined functors $E$ and $F$ on $\M$
(cf. Section \ref{SectionFunctors}), and shown that these are
bi-adjoint (Theorems \ref{AdjointPair} and
\ref{Another-Adjoint-Pair}).  We also have a natural endomorphism $X
\in End(E)$ (Proposition \ref{PropactonE}), and we've shown that
$X^{\vee}=Y$ (cf. Theorem \ref{Compatibility-Of-Endomorphisms}). Now
we introduce the remaining data necessary to define a
$\g$-categorification, and finally prove our main theorem.

\subsubsection{The $\g$-action on $K(\M)$}
The vector space $K(\M)$ has a basis $[V(\lambda)]$, where $\lambda
\in \Lambda$.  Therefore we have a natural linear isomorphism
$$
\xi:K(\M) \rightarrow \FF
$$
given by $\xi([V(\lambda)])=v_{\lambda}$, where $\FF$ is the Fock
space representation of $\g$ (cf. Section \ref{Fock space}).
  In this section we show that $\xi$ intertwines the operators $[E_i]$ and $[F_i]$ acting on $K(\M)$ with the Chevalley generators  $e_i$ and $f_i$ acting on $\FF$.  Consequently the operators $[E_i]$ and $[F_i]$ induce an action of $\g$ on $K(\M)$, and $\xi$ is an isomorphism of $\g$-modules.

\begin{lemma}
Suppose $V_k \in \M_k(k)$ has a Weyl filtration
$$
0=V_k^0 \subset V_k^1 \subset \cdots \subset V_k^N=V_k,
$$
where for $i=0,...,N-1$
$$
V_k^{i+1}/V_k^i \simeq V_k(\mu_i).
$$
Then $V=\Psi^{-1}(V_k) \in \M(k)$ has a Weyl filtration
$$
0=V^0 \subset V^1 \subset \cdots \subset V^N=V
$$
where for $i=0,...,N-1$.
$$
V^{i+1}/V^i \simeq V(\mu_i).
$$
\end{lemma}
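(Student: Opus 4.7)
The plan is to apply $\Psi^{-1}$ to the given Weyl filtration of $V_k$ directly, set $V^i = \Psi^{-1}(V_k^i)$, and check that this indeed produces a Weyl filtration of $V$ with the prescribed factors. The key inputs are that both $R_0$ and $I_0$ restrict to exact functors on the relevant subcategories, plus Lemma \ref{Weyl-Module-Ind}, which identifies Weyl modules with their images under $R_0$ and $I_0$.

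First I would verify that $V^i = \Psi^{-1}(V_k^i)$ is a sub-object of $V = \Psi^{-1}(V_k)$. Recall that at level $n$ the object $V^i$ is $I_0^{n-k}(V_k^i)$ for $n \geq k$ and $R_0^{k-n}(V_k^i)$ for $n < k$. The functor $R_0$ is exact because taking $G_1$-invariants amounts to extracting a weight space for the torus $G_1$. The functor $I_0$, restricted to $\M_n(k)$ with $n \geq k$, is an equivalence of categories by Proposition \ref{R_0Equivalence} and Corollary \ref{poly ind inverse}, hence also exact. Therefore the inclusions $V_k^i \subset V_k$ translate to level-wise inclusions $V_n^i \subset V_n$ at every $n$.

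Next I would check compatibility with the gluing data. For $n < k$ the gluing maps $\alpha_n$ for $V$ and for $V^i$ are both identity-type maps arising from iterating $R_0$, so the inclusion is tautologically compatible. For $n \geq k$, by Lemma \ref{Diagram10} and Corollary \ref{poly ind inverse}, the gluing map $\alpha_n : R_0(I_0^{n+1-k}(V_k)) \to I_0^{n-k}(V_k)$ is the evaluation-at-$I_{n,n+1}$ isomorphism, and this evaluation is natural in the input module, so it restricts along $V_k^i \hookrightarrow V_k$ to give the analogous isomorphism for $V^i$. This makes $(V^i)_{n}$ into a sub-object of $(V_n)$ in $\M$.

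Finally I would compute the successive quotients. Exactness of $R_0$ (for $n < k$) and of $I_0$ on $\M_n(k)$ for $n \geq k$ gives
\[
V_n^{i+1}/V_n^i \;\cong\; \begin{cases} I_0^{n-k}(V_k^{i+1}/V_k^i) \cong I_0^{n-k}(V_k(\mu_i)) & n \geq k, \\ R_0^{k-n}(V_k^{i+1}/V_k^i) \cong R_0^{k-n}(V_k(\mu_i)) & n < k. \end{cases}
\]
By Lemma \ref{Weyl-Module-Ind} these are precisely $V_n(\mu_i)$, so $V^{i+1}/V^i \cong \Psi^{-1}(V_k(\mu_i)) = V(\mu_i)$. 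I don't expect a serious obstacle here; the only point that requires care is the verification that the level-wise inclusions and quotient identifications glue into genuine morphisms in $\M$, which is handled by the naturality of the counit isomorphism $R_0 I_0 \to \mathrm{Id}$ from Corollary \ref{poly ind inverse}.
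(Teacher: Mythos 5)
Your proposal is correct and follows essentially the same route as the paper: apply $\Psi^{-1}$ levelwise to the filtration and identify the subquotients, the only difference being that the paper simply invokes that $\Psi$ is an equivalence (Propositions \ref{InvertoS} and \ref{WeylObjects}) instead of re-verifying exactness of $R_0$, $I_0$ and the gluing compatibility by hand as you do. One small caveat: Lemma \ref{Weyl-Module-Ind} is stated only for $n\geq k$, so for $n<k$ you should conclude directly that the levelwise quotients agree with the definition of $\Psi^{-1}(V_k(\mu_i))$ rather than identifying them with $V_n(\mu_i)$; this does not affect the argument.
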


\begin{proof}
Note that $V_k^i \in \M_k(k)$ for $i=0,...,N$.  Set
$$
V^i=\Psi^{-1}(V_k^i).
$$
By definition $V^i \in \M(k)$, and we have a filtration
$$
0=V^0 \subset V^1 \subset \cdots \subset V^N=V.
$$
Now $\Psi(V^{i+1}/V^i) \simeq V_k^{i+1}/V_k^i \simeq V_k(\mu_i)$.
Therefore by Propositions \ref{InvertoS} and \ref{WeylObjects},
$$
V^{i+1}/V^i \simeq V(\mu_i).
$$
\end{proof}

\begin{proposition}
\label{PropWeylFiltration} Let $\lambda \in \Lambda$ and set
$V=V(\lambda)$.  Then,
\begin{enumerate}
\item The object $E(V)$ admits a Weyl filtration
$$
0=E(V)^0 \subset E(V)^1 \subset \cdots \subset E(V)^N=E(V).
$$
The composition factors that occur in this filtration are isomorphic
to $V(\mu)$ for all $\mu \in \Lambda$ such that $ \xymatrix{ \mu
\ar[r] & \lambda} $, and each such factor occurs exactly once.
\item The object $F(V)$ admits a Weyl filtration
$$
0=F(V)^0 \subset F(V)^1 \subset \cdots \subset F(V)^N=F(V).
$$
The composition factors that occur in this filtration are isomorphic
to $V(\mu)$ for all $\mu \in \Lambda$ such that $ \xymatrix{ \lambda
\ar[r] & \mu} $, and each such factor occurs exactly once.
\end{enumerate}
\end{proposition}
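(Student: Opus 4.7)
The plan is to use the equivalence $\Psi$ of Proposition \ref{Forgetful-Functor} to reduce both parts to classical Pieri-type rules for polynomial representations of $G_n$, and then to invoke the preceding lemma to lift the resulting Weyl filtrations back up to $\M$.

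Set $k=|\lambda|$. For part (2), the object $F(V(\lambda))=St\otimes V(\lambda)$ lies in $\M(k+1)$, so by Proposition \ref{InvertoS} it is recovered as $\Psi^{-1}\Psi_{k+1}(F(V(\lambda)))$. Unwinding the definition of $F$ together with Lemma \ref{Weyl-Module-Ind} gives $\Psi_{k+1}(F(V(\lambda)))=\ff^{k+1}\otimes V_{k+1}(\lambda)$. The classical Pieri rule, valid in arbitrary characteristic in the refined form of a Weyl filtration (see \cite{J}, Part II), asserts that $\ff^n\otimes V_n(\lambda)$ admits a Weyl filtration whose successive quotients are exactly the Weyl modules $V_n(\mu)$ indexed by partitions $\mu$ obtained from $\lambda$ by adding a single box, each appearing once. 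Applying this at $n=k+1$ and then invoking the preceding lemma transports the filtration to $\M$, producing the required filtration of $F(V(\lambda))$ with quotients $V(\mu)=\Psi^{-1}(V_{k+1}(\mu))$.

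For part (1), we may assume $k\geq 1$, since the case $\lambda=\emptyset$ gives $E(V(\emptyset))=0$. Then $E(V(\lambda))\in\M(k-1)$, so by Proposition \ref{InvertoS} it suffices to analyze the $G_{k-1}$-module $\Psi_{k-1}(E(V(\lambda)))=R_1(V_k(\lambda))$. I would proceed by restricting $V_k(\lambda)$ to the Levi subgroup $G_{k-1}\times G_1\subset G_k$. The standard fact that Levi restriction preserves good (and hence Weyl) filtrations, combined with the classical branching rule which furnishes the correct characters, gives a Weyl filtration of $V_k(\lambda)|_{G_{k-1}\times G_1}$ whose quotients are $V_{k-1}(\mu)\otimes \ff_{k-|\mu|}$ for partitions $\mu$ interlacing $\lambda$. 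Since this filtration is stable under $G_1$, extracting the $G_1$-weight-one summand yields a Weyl filtration of $R_1(V_k(\lambda))$ as a $G_{k-1}$-module, whose quotients are exactly the $V_{k-1}(\mu)$ for $\mu$ interlacing $\lambda$ with $|\mu|=k-1$; these are precisely the partitions obtained from $\lambda$ by removing a single box. Lifting through $\Psi^{-1}$ as in part (2) completes the argument.

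The main technical hurdle lies in citing the correct modular statements: that $\ff^n\otimes V_n(\lambda)$ carries a Weyl filtration realizing Pieri's rule, and that the restriction of a Weyl module to the Levi $G_{n-1}\times G_1$ carries a Weyl filtration realizing the classical branching rule. Both are standard consequences of Donkin's theory of good filtrations, but they are the essential external input without which the proof would not proceed.
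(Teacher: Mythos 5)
Your proposal is correct and follows essentially the same route as the paper: use the equivalence $\Psi$ (Proposition \ref{Forgetful-Functor}) to reduce to the finite-rank categories $\M_n(k)$, identify $\Psi_{k+1}(F(V(\lambda)))$ with $\ff^{k+1}\otimes V_{k+1}(\lambda)$ and $\Psi_{k-1}(E(V(\lambda)))$ with $R_1(V_k(\lambda))$, invoke modular Pieri/branching in the form of Weyl filtrations (which the paper packages as Lemma \ref{Weyl-Subquotient}, established via Theorem \ref{Donkin} and a character argument), and lift through $\Psi^{-1}$ using the preceding lemma. The only difference is organizational: you sketch the derivation of the modular branching/Pieri rules inline (Levi restriction plus characters), whereas the paper isolates them as an appendix lemma, but the mathematical content is the same.
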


\begin{proof}
Let $k$ be such that $\lambda \in \Lambda_k$.  We have the following
diagram, which commutes:
$$
\xymatrix{ \M(k) \ar[r]^{\Psi_k} \ar[d]_{E} & \M_k(k) \ar[d]^{R_1}
\\ \M(k-1) \ar[r]^{\Psi_{k-1}} & \M_{k-1}(k-1) }
$$
Note that by Proposition \ref{Forgetful-Functor} the functors
$\Psi_k$ and $\Psi_{k-1}$ are equivalences. Write $V=(V_n) \in
\M(k)$.  Then, by this commutative square and Proposition
\ref{InvertoS}, $$V \simeq \Psi^{-1}(R_1(V_k)).$$ By the above lemma
and Lemma \ref{Weyl-Subquotient}, part (1) of the proposition
follows.  The proof of part (2) of the proposition is entirely
analogous with the above square replaced by
$$
\xymatrix{ \M(k) \ar[r]^{\Psi_{k+1}} \ar[d]_{F} & \M_{k+1}(k)
\ar[d]^{\ff^{k+1}\otimes\ \cdot} \\ \M(k+1) \ar[r]^{\Psi_{k+1}} &
\M_{k+1}(k+1) }
$$
\end{proof}

%\begin{corollary}
%The following squares commutes:
%$$
%\xymatrix{
%K(\M) \ar[r]^{\Psi} \ar[d]_{[E]} & \FF \ar[d]^{\pi(e)} \\ K(\M) \ar[r]^{\Psi} %& \FF
%}
%$$
%Similarly, there is a commutative square with $[E]$ and $\pi(e)$ replaced %by $[F]$ and $\pi(f)$, respectively.
%\end{corollary}

%We fix the Weyl filtration  $0=V_0\subset V_1\subset V_2\cdots$ for $E(V(\lambda))$ %in above proposition.
%It is easy to see that $X$ preserves $V_k$ for any $k$, and hence $X$ acts %on $V_k/V_{k-1}$.
%Similarly, we fix the Weyl filtration  $0=W_0\subset W_1\subset \cdots $ %for $F(V(\lambda))$,
% then $Y$ preseves $W_k$ for any $k$, and hence $Y$ acts on $W_k/W_{k-1}$. %By Corollary \ref{Forgetful-Functor} and
% Lemma \ref{Weyl-Equivalence}, we get corresponding Weyl filtrations
% $0=(V_n)_0\subset (V_n)_1\subset (V_n)_2\cdots$ for $R_1(V_{n+1}(\lambda))$, %and
% $0=(W_n)_0\subset (W_n)_1\subset (W_n)_2\cdots$ for $\ff^n\otimes
% V_n(\lambda)$.

 \begin{lemma}
 \label{LemmaDecomposition}
 Let $\lambda \in \Lambda$ and $V=V(\lambda) \in \M$, and consider the Weyl filtrations of $E(V)$ and $F(V)$ as in Proposition \ref{PropWeylFiltration}.  Then $X$ (resp. $Y$) preserves the filtration of $E(V)$ (resp. $F(V)$).  Moreover,
\begin{enumerate}
\item Given $0 \leq i \leq N-1$ set $\mu \in \Lambda,j\in \Z / p\Z$ such that $E(V)^{i+1}/E(V)^i \simeq V(\mu)$ and $
\xymatrix{ \mu \ar[r]^{j} & \lambda} $.  Then $X$ acts on
$E(V)^{i+1}/E(V)^i$ by $j$.
\item Given $0 \leq i \leq N-1$ set $\mu \in \Lambda,j\in \Z / p\Z$ such that $F(V)^{i+1}/F(V)^i \simeq V(\mu)$ and $
\xymatrix{ \lambda \ar[r]^{j} & \mu} $.  Then $Y$ acts on
$F(V)^{i+1}/F(V)^i$ by $j$.
 \end{enumerate}
 In particular, $E_j=F_j=0$ for all $j \in \ff$ such that $j \not\in \Z / p\Z$, i.e.
$$
E=\bigoplus_{j \in \Z / p\Z}E_j
$$
and
$$
F=\bigoplus_{j \in \Z / p\Z}F_j.
$$
 \end{lemma}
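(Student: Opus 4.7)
The plan is to reduce the statement to a finite-$n$ calculation via the equivalence $\Psi$, and then pin down the action of $X$ and $Y$ on Weyl subquotients by a Casimir-based argument.

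Setting $k=|\lambda|$, we have $V = V(\lambda) = \Psi^{-1}(V_k(\lambda))$. For any $n \geq k$, the Weyl filtration of $E(V)$ constructed in Proposition \ref{PropWeylFiltration} corresponds under $\Psi$ to a Weyl filtration of $R_1(V_{n+1}(\lambda))$ as a $G_n$-module, whose subquotients are the Weyl modules $V_n(\mu)$ indexed by partitions $\mu$ obtained from $\lambda$ by deleting a single box (this is essentially \ref{Weyl-Subquotient}). Under the same identification, $X(V)$ corresponds to the $G_n$-equivariant endomorphism $X_n$ of $R_1(V_{n+1}(\lambda))$.

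The key step is to use the Casimir identity from the proof of Lemma \ref{LemmaEpsilon},
$$C_{n+1} - C_n = 2 X_n + \sum_{i=1}^{n} x_{i,i} - n\, x_{n+1,n+1} + x_{n+1,n+1}^2 + 2n,$$
to compute the scalar by which $X_n$ acts on the subquotient $V_n(\mu)$. By Lemma \ref{SCALAR}, $C_{n+1}$ acts by $c_{n+1}(\lambda)$ and $C_n$ by $c_n(\mu)$; the element $\sum_{i=1}^n x_{i,i}$ is central in $U(\g_n)$ and acts by the degree $|\mu|$, while $x_{n+1,n+1}$ acts by $1$ because we are in the $G_1$-weight-$1$ subspace. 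If $\mu$ is obtained by removing the box in row $r$, then plugging these values in and simplifying using \eqref{CasScalar} yields that $X_n$ acts on $V_n(\mu)$ by $\lambda_r - r$, which is precisely the content $j$ of the removed box.

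Because $X_n$ acts by a single scalar on every Weyl subquotient, its generalized eigenspace decomposition of $R_1(V_{n+1}(\lambda))$ is a $G_n$-stable decomposition that refines any Weyl filtration: intersecting the filtration with each eigenspace and reassembling gives a Weyl filtration preserved by $X_n$, with the asserted eigenvalues on the subquotients. Transporting back via $\Psi$ proves (1). For (2) I would run the same argument using the coproduct identity
$$\Delta(C_n) = C_n \otimes 1 + 1 \otimes C_n + 2 Y_n$$
in $U(\g_n) \otimes U(\g_n)$ acting on $\ff^n \otimes V_n(\lambda)$: on a Weyl subquotient $V_n(\mu)$ with $\mu$ obtained from $\lambda$ by adding a box in row $r$, $\Delta(C_n)$ acts by $c_n(\mu)$, $1 \otimes C_n$ by $c_n(\lambda)$, and $C_n \otimes 1$ by $n$ (the Casimir scalar on the natural representation $\ff^n$), so $Y_n$ acts by $\lambda_r - r + 1$, the content of the added box. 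The final claim that $E_a = F_a = 0$ for $a \notin \Z/p\Z$ is then immediate, since all eigenvalues computed are integers and land in the prime subfield of $\ff$.

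The main obstacle is the preservation of a Weyl filtration, since $X_n$ need not be compatible with the specific filtration coming from Proposition \ref{PropWeylFiltration}. The eigenspace-refinement remedy above handles this cleanly, and the remaining work is the elementary Casimir-scalar computation, which requires care only in tracking the index shifts between $n$ and $n+1$.
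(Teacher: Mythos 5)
Your Casimir computations are correct and match the paper's: on a Weyl subquotient $V_n(\mu)$ with $\mu \xrightarrow{\ j\ } \lambda$, the scalar is $(c_{n+1}(\lambda)-c_n(\mu)-k-n)/2 = j$, and similarly for $Y_n$ via the coproduct identity. However, you identify ``preservation of the Weyl filtration'' as the main obstacle and offer an eigenspace-refinement remedy, whereas the paper treats preservation as immediate. The reason it is immediate is already contained in the formula you quote: restricted to $R_1(V_{n+1}(\lambda))$, the terms $C_{n+1}$, $\sum_{i=1}^n x_{i,i}$, and $x_{n+1,n+1}$ all act by \emph{fixed scalars} (since $V_{n+1}(\lambda)$ is generated by a highest weight vector, the overall degree is $k$, and we are in the $G_1$-weight-$1$ subspace), so $X_n = (\text{scalar}) - \tfrac{1}{2}C_n$ on $R_1(V_{n+1}(\lambda))$, and $C_n$ is central in $U(\g_n)$. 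Hence $X_n$ preserves \emph{every} $G_n$-stable subspace, in particular any Weyl filtration of $R_1(V_{n+1}(\lambda))$, with no choice of filtration involved; the same reasoning (via $Y_n = \tfrac{1}{2}(\Delta(C_n)-C_n\otimes 1-1\otimes C_n)$) handles $F$. Your proposed remedy has a circularity issue: the claim that ``$X_n$ acts by a single scalar on every Weyl subquotient'' already presupposes that $X_n$ preserves the filtration, and justifying the refinement construction properly would require the nontrivial fact that $G_n$-equivariant direct summands of modules admitting Weyl filtrations themselves admit Weyl filtrations. The centrality observation makes all of this unnecessary.
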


 \begin{proof}
 It is clear from the formula
\begin{equation}
\label{EqX} C_{n+1}-C_n= 2X_n + (\sum_{i=1}^n x_{i,i})
-nx_{n+1,n+1}+x_{n+1,n+1}^2+2n
\end{equation}
that $X$ preserves the filtration of $E(V)$.  Now let $V=(V_n)$. Set
$k$ so that $\lambda \in \Lambda_k$ and let $n \geq k$. Consider
first a Weyl filtration of $R_1(V_{n+1})$:
$$
0=R_1(V_{n+1})^0 \subset R_1(V_{n+1})^1 \subset \cdots \subset
R_1(V_{n+1})^N=R_1(V_{n+1})
$$
such that
$$
R_1(V_{n+1})^{i+1}/R_1(V_{n+1})^i \simeq V_{n}(\mu).
$$
(cf. Lemma \ref{Weyl-Subquotient}). Since $ \xymatrix{ \mu
\ar[r]^{j} & \lambda} $ there exists $\ell$ such that
$\lambda_{\ell}-\ell=j$.
 By (\ref{CasScalar}) one computes
 $$
 c_{n+1}(\lambda)-c_n(\mu)=2(\lambda_{\ell}-\ell)+k+n.
 $$
Since the degree of representation $V_{n+1}$ is $k$, which is the size of the partition $\lambda$, then $\sum_{i=1}^{n+1} x_{i,i}$ acts by $k$ on $V_{n+1}$. On the other hand, by the definition of $R_1$, $x_{n+1}$ acts on $R_1(V_{n+1})$ by $1$.
Then by (\ref{EqX})
 it follows that $X_n$ acts on $R_1(V_{n+1})^{i+1}/R_1(V_{n+1})^i\simeq V_n(\mu)$ by
 $$(c_{n+1}(\lambda)-c_n(\mu)-k-n)/2=\lambda_\ell-\ell=j . $$  Hence $X$ acts on $E(V)^{i+1}/E(V)^i$ by $j$.  This proves the statement (1) of the lemma.

The statement (2) of the lemma follows along similar lines. Firstly,
it is clear that $Y$ preserves the filtration of $F(V)$. Now
considering $Y_n$ as an element of $U(\g_n)\otimes U(\g_n)$, note
that
\begin{equation}
\label{EqCoproduct} Y_n=\frac{1}{2}(\Delta(C_n)-C_n \otimes 1 -1
\otimes C_n)
\end{equation}
where $\Delta:U(\g_n) \rightarrow U(\g_n)\otimes U(\g_n)$ is the
coproduct in $U(\g_n)$.  Consider a Weyl filtration of
$F(V)_n=\ff^n\otimes V_{n}$:
$$
0=F(V)^0_n \subset F(V)^1_n \subset \cdots \subset F(V)^N_n=F(V)_n
$$
such that
$$
F(V)^{i+1}_n/F(V)^i_n \simeq V_{n}(\mu).
$$By (\ref{EqCoproduct}) and Lemmas
\ref{Weyl-Subquotient} and \ref{SCALAR}, it follows that $Y_n$ acts
on $F(V)^{i+1}_n/F(V)^i_n$ by $j$.

The last statement of the lemma follows from parts (1) and (2) and
the fact that Weyl objects descend to a basis of the Grothendieck
group $K(\M)$.
 \end{proof}

 Recall that $\xi:K(\M) \rightarrow \FF$ is defined by $\xi([V(\lambda)])=v_{\lambda}$.  As an immediate corollary of the above lemma we obtain:

\begin{proposition}
\label{Ecommutative} For every $i \in \Z / p\Z$ the following
diagram commutes:
$$
\xymatrix{ K(\M) \ar[r]^{\xi} \ar[d]_{[E_i]} & \FF \ar[d]^{e_i} \\
K(\M) \ar[r]^{\xi} & \M }
$$
Similarly, there is a commutative square with $[E_i]$ and $e_i$
replaced by $[F_i]$ and $f_i$, respectively.

In particular, the operators $[E_i]$ and $[F_i]$ define an action of
$\g$ on $K(\M)$ via $e_i \mapsto [E_i]$ and $f_i \mapsto [F_i]$, and
$\xi$ is an isomorphism of $\g$-modules.

\end{proposition}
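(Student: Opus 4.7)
The plan is to read off everything from Lemma \ref{LemmaDecomposition} and Proposition \ref{PropWeylFiltration}, and match the resulting formula for $[E_i][V(\lambda)]$ (resp. $[F_i][V(\lambda)]$) with the combinatorial definition of $e_i.v_\lambda$ (resp. $f_i.v_\lambda$) on Fock space.

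First I would recall that by Proposition \ref{WeylObjects} the classes $\{[V(\lambda)]\}_{\lambda \in \Lambda}$ form a basis of $K(\M)$, so it suffices to verify the commutativity of both squares on such a basis element. Fix $\lambda \in \Lambda$ and consider $E(V(\lambda))$. By Proposition \ref{PropWeylFiltration}(1), $E(V(\lambda))$ admits a Weyl filtration whose subquotients are precisely the $V(\mu)$, one for each $\mu$ with $\mu \to \lambda$. Since $E = \bigoplus_{j \in \Z/p\Z} E_j$ is a decomposition into sub-functors (where $E_j$ is the generalized $j$-eigenspace of $X$), and since $X$ preserves the filtration and acts by the content of the added box on the subquotient corresponding to $\mu \overset{j}{\to} \lambda$ by Lemma \ref{LemmaDecomposition}(1), each such subquotient lies in $E_j(V(\lambda))$ when the box added has content $j$. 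Taking classes in $K(\M)$,
$$
[E_i(V(\lambda))] = \sum_{\mu:\, \mu \overset{i}{\to} \lambda} [V(\mu)].
$$
Under $\xi$ this becomes $\sum v_\mu$, which is precisely $e_i.v_\lambda$ by the definition of the Fock space action in Section \ref{Fock space}. This shows that the first square commutes.

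The second square is handled in exactly the same way, using part (2) of Proposition \ref{PropWeylFiltration} and part (2) of Lemma \ref{LemmaDecomposition}: $F(V(\lambda))$ has a Weyl filtration with subquotients $V(\mu)$ for each $\mu$ with $\lambda \to \mu$, and $Y$ acts on the subquotient corresponding to $\lambda \overset{j}{\to} \mu$ by the scalar $j$. Hence
$$
[F_i(V(\lambda))] = \sum_{\mu:\, \lambda \overset{i}{\to} \mu} [V(\mu)],
$$
which matches the formula for $f_i.v_\lambda$.

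Finally, the last assertion is a formal consequence. Because $\xi$ is a linear isomorphism and intertwines the operators $[E_i]$, $[F_i]$ with the Chevalley generators $e_i$, $f_i$ of $\g$ acting on $\FF$, and because these operators on $\FF$ satisfy the defining relations of $\g$ (the Fock space being an integrable $\g$-representation), the operators $[E_i]$ and $[F_i]$ on $K(\M)$ automatically satisfy the same relations. Thus $e_i \mapsto [E_i]$, $f_i \mapsto [F_i]$ defines a $\g$-action on $K(\M)$ for which $\xi$ is an isomorphism of $\g$-modules. There is no real obstacle here; the only thing to be careful about is confirming that the filtration-and-eigenspace data packaged in Lemma \ref{LemmaDecomposition} really does give the multiplicity-one statement in $K(\M)$, but this is immediate since each subquotient appears exactly once in the Weyl filtration of $E(V(\lambda))$ and $F(V(\lambda))$.
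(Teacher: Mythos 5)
Your proposal is correct and follows essentially the same route as the paper, which derives the proposition directly from Lemma \ref{LemmaDecomposition} (and hence from the Weyl filtrations of $E(V(\lambda))$ and $F(V(\lambda))$ given in Proposition \ref{PropWeylFiltration}) with the key observation that $X$ (resp.\ $Y$) acts on each Weyl subquotient by the scalar equal to the content of the added box. The paper simply states this as an ``immediate corollary'' of Lemma \ref{LemmaDecomposition}; your write-up spells out the same reasoning in more detail, including the final formal step that $\xi$ transports the $\g$-relations from $\FF$ to $K(\M)$.
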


\subsubsection{The $\bar{H}_n$-action on $End(E^n)$}
To define the action of the degenerate affine Hecke algebra on
powers of $E$ we first need to define the operator $T$ on $E^2$. Set
$t_n=s_{n+1}$.  Clearly for $V_{n+2} \in \M_{n+2}$, $t_n$ defines an
operator on $R_1^2(V_{n+2})$.  Let $T=(t_n)_{n=0}^{\infty}$.

\begin{lemma}
\label{PropT} The operator $T$ acts on $E\circ E$, i.e. $T \in
End(E^2)$.
\end{lemma}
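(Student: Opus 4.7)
The plan is to verify three things: that each $t_n$ is a well-defined $G_n$-equivariant endomorphism of $E^2(V)_n = R_1 R_1(V_{n+2})$, that the sequence $(t_n)$ is compatible with the gluing maps of $E^2(V)$, and that $T$ is natural in $V$.

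The first point follows directly from the shape of the matrix $s_{n+1}$: it lies in $G_{n+2}$, acts as a signed transposition of $v_{n+1}$ and $v_{n+2}$, and therefore preserves the $(1,1)$-weight subspace $R_1 R_1(V_{n+2}) \subseteq V_{n+2}$ for the torus at these two slots. Moreover, $s_{n+1}$ commutes with the standard $G_n \subset G_{n+2}$, whose elements fix $v_{n+1}$ and $v_{n+2}$. Naturality in $V$ is also automatic: for a morphism $f = (f_m): V \to W$, the component $E^2(f)_n$ is the restriction of the $G_{n+2}$-equivariant map $f_{n+2}$, which tautologically commutes with $s_{n+1} \in G_{n+2}$.

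The substantive step is compatibility with the gluing maps. Iterating the definition of the functor $E$ yields
\[
\tilde{\tilde{\alpha}}_n = R_1 R_1(\alpha_{n+2}) \circ R_1(s_{n+2}^{-1}) \circ s_{n+1}^{-1}: R_0 R_1 R_1 (V_{n+3}) \to R_1 R_1(V_{n+2}).
\]
Chasing a vector both ways around the compatibility square for $T$ and $\tilde{\tilde{\alpha}}_n$, and using the $G_{n+2}$-equivariance of $\alpha_{n+2}$ to slide the outer $s_{n+1}$ (coming from $t_n$) past $\alpha_{n+2}$, it suffices to verify the matrix identity
\[
s_{n+1}\, s_{n+2}^{-1}\, s_{n+1}^{-1} = s_{n+2}^{-1}\, s_{n+1}^{-1}\, s_{n+2}
\]
inside $G_{n+3}$.

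This identity is equivalent to the braid relation $s_{n+1} s_{n+2} s_{n+1} = s_{n+2} s_{n+1} s_{n+2}$: inverting the braid relation gives $s_{n+1}^{-1} s_{n+2}^{-1} s_{n+1}^{-1} = s_{n+2}^{-1} s_{n+1}^{-1} s_{n+2}^{-1}$, after which left-multiplying by $s_{n+1}$ and right-multiplying by $s_{n+2}$ produces exactly the desired equation. The main (though mild) obstacle is confirming the braid relation on the nose with the specific sign convention on the $s_i$'s adopted in (\ref{matrix}); this is an elementary $3 \times 3$ matrix computation in the block at positions $(n+1, n+2, n+3)$, in the same spirit as the computation in the proof of Lemma \ref{Transposition}.
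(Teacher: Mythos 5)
Your proof is correct and follows essentially the same route as the paper: you reduce the compatibility of $(t_n)$ with the gluing maps $\tilde{\tilde{\alpha}}_n$ to the braid relation $s_{n+1}s_{n+2}s_{n+1}=s_{n+2}s_{n+1}s_{n+2}$ together with the $G_{n+2}$-equivariance of $\alpha_{n+2}$, exactly as the paper does (the paper's second diagram has some index typos, but it is invoking the same two ingredients). Your preliminary checks of $G_n$-equivariance and naturality in $V$ are straightforward and simply make explicit what the paper leaves implicit.
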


\begin{proof}
Given $V=(V_n, \alpha_n)$, for $n\gg 0$, we need to check the
following diagram commutes:
$$
\xymatrix{R_0(E^2(V)_n) \ar[r]^{\tilde{\tilde{\alpha}}_{n-1}}
\ar[d]^{t_{n}} & E^2(V)_{n-1} \ar[d]^{t_{n-1}} \\
R_0(E^2(V)_n) \ar[r]^{\tilde{\tilde{\alpha}}_{n-1}} & E^2(V)_{n-1} }
$$
Thus it suffices to check that the following diagram commutes:
$$
\xymatrix{
R_{0} \circ R_1 \circ R_1(V_{n+1})\ar[rr]^>>>>>>>>>{\alpha_{n+1}\circ s_{n+1}^{-1}\circ s_n ^{-1}} \ar[d]^{s_{n+1}} && R_1 \circ R_1(V_{n+1}) \ar[d]^{s_n} \\
R_{0} \circ R_1 \circ R_1(V_{n+2})
\ar[rr]^>>>>>>>>>{\alpha_{n+1}\circ s_{n+1}^{-1}\circ s_n ^{-1}} &&
R_1 \circ R_1(V_{n+1}) }
$$
This diagram  commutes by the braid relation, $ s_n
s_{n+1}s_n=s_{n+1}s_ns_{n+1}$, and the fact that $\alpha_{n+1}$
commutes with $s_n$ ($\alpha_{n+1}$ is a morphism of
$G_{n+1}$-modules).\qedhere

\end{proof}

\begin{proposition}
\label{HeckeAction} The degenerate affine Hecke algebra $\bar{H}_n$
acts on $End(E^n)$ via formulas (\ref{EqXX}) and (\ref{EqTT}).
\end{proposition}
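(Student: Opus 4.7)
The plan is to verify the defining relations of $\bar{H}_n$ directly. For $V \in \M$ and $m \gg 0$, $E^n(V)_m = R_1^n(V_{m+n})$, and under the formulas (\ref{EqXX})--(\ref{EqTT}) the generator $y_i$ acts as $X_{m+n-i}$ and $\tau_i$ acts as $s_{m+n-i}$, both restricted to $R_1^n(V_{m+n})$. The relations to check are: (a) the symmetric group relations on the $\tau_i$; (b) the commutation $y_iy_j = y_jy_i$; (c) the commutation $\tau_iy_j = y_j\tau_i$ for $j \notin \{i,i+1\}$; and (d) the Hecke relation $\tau_iy_{i+1} - y_i\tau_i = 1$.

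Relations (a)--(c) are formal consequences of Lemma \ref{LemmaEpsilon}. For (a), the braid relation $s_ks_{k+1}s_k = s_{k+1}s_ks_{k+1}$ holds in $G_{m+n}$ (as already invoked in Lemma \ref{PropT}), while $s_k^2$ is the diagonal matrix with $-1$'s in positions $k, k+1$, and hence acts as $(-1)^2 = 1$ on $R_1^n(V_{m+n})$ since both positions carry $G_1$-weight one. For (b), assuming $i > j$, we have $X_{m+n-i} \in U(\mathfrak{gl}_{m+n-i+1}) \subset U(\mathfrak{gl}_{m+n-j})$, which lies in the commutant of the central element $X_{m+n-j}$ inside $U(\mathfrak{gl}_{m+n-j+1})$. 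For (c), if $j > i+1$ then $X_{m+n-j}$ involves only rows and columns strictly below those moved by $s_{m+n-i}$; if $j < i$ then $s_{m+n-i} \in G_{m+n-j}$ commutes with the central element $X_{m+n-j}$.

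The main obstacle is the Hecke relation (d). Setting $k = m+n-i-1$, it reduces to the identity
$$s_{k+1}X_k - X_{k+1}s_{k+1} = 1$$
as endomorphisms of $R_1\circ R_1(V_{k+2})$, where the two copies of $R_1$ correspond to positions $k+1, k+2$. A direct computation of the adjoint action of $s_{k+1}$ on matrix units, in the spirit of the proof of Proposition \ref{PropactonE}, yields the identity
$$s_{k+1}X_ks_{k+1}^{-1} = X_{k+1} + 1 - x_{k+2,k+1}x_{k+1,k+2}$$
in $U(\mathfrak{gl}_{k+2})$, so that $s_{k+1}X_k - X_{k+1}s_{k+1} = (1 - x_{k+2,k+1}x_{k+1,k+2})s_{k+1}$. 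It remains to check that $(1-x_{k+2,k+1}x_{k+1,k+2})s_{k+1} = 1$ on $R_1\circ R_1(V_{k+2})$, which I would verify by decomposing into $GL_2$-isotypic components with respect to positions $k+1, k+2$: the weight-$(1,1)$ subspace of a $GL_2$-irreducible is non-zero only in the symmetric square of the standard representation (where both $s_{k+1}$ and $1 - x_{k+2,k+1}x_{k+1,k+2}$ act as $-1$) and in the alternating square (where both act as $1$), so the product is $1$ in either case.
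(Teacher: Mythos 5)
Your proof is correct and follows essentially the same route as the paper: the mixed/braid/commutation relations are dispatched via Lemma \ref{LemmaEpsilon} (which the paper treats as trivial), and the key Hecke relation is reduced, through the same conjugation computation $s_{k+1}X_ks_{k+1}^{-1}=X_{k+1}+1-x_{k+2,k+1}x_{k+1,k+2}$, to the identity $s_{k+1}X_k-X_{k+1}s_{k+1}=1$ on $R_1\circ R_1$, which you verify by exactly the $GL_2$ weight-$(1,1)$ isotypic analysis that constitutes the paper's Lemma \ref{RepLem2}. Only cosmetic quibbles: $X_{m+n-j}$ is not central in $U(\mathfrak{gl}_{m+n-j+1})$, only $G_{m+n-j}\times G_1$-invariant (which is all you use), and the isotypic decomposition implicitly uses semisimplicity of degree-$2$ polynomial $GL_2$-modules, valid since $p\neq 2$.
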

\begin{proof}
First note that for $V_{n+2} \in \M_{n+2}$, $t_n^2$ acts on
$R_1^2(V_{n+2})$ by the identity.  Therefore $T^2=1$ in $End(E^2)$.
The only other relation that is not trivial to show is relation
(\ref{HeckeRelation}). Relation (\ref{HeckeRelation}) is a
consequence of the following equality in $End(E^2)$:
$$
T\circ XE-EX\circ T=1.
$$
This equality follows from the identity
\begin{equation}
\label{eq2} s_nX_{n-1}-X_ns_n=1
\end{equation}
in $End(R_1^2)$.  Since we have,
$$
s_nX_ns_{n}^{-1}=X_{n-1}+x_{n,n+1}x_{n+1,n}-1
$$
equation (\ref{eq2}) follows from Lemma \ref{RepLem2}.
\end{proof}

\subsubsection{The decomposition of $\M$ as a direct sum of weight categories}
\label{DecompositionM}

Let
$\mathcal{F}=\bigoplus_{\omega\in P} \mathcal{F}_\omega$ be the
weight space decomposition as a representation of $\g$. Recall that for a partition
$\lambda \in \Lambda$, $v_\lambda$ is a weight vector. We
define a \emph{weight function} $wt: \Lambda\rightarrow P$ by requiring that
$v_\lambda\in \mathcal{F}_{wt(\lambda)}$.

We recall some combinatorial notions.  For a nonnegative integer $d$, let $\Lambda_d$ denote the set of partitions of $d$.  A partition $\lambda$ is a \emph{$p$-core} if there exist no $\mu \subset \lambda$ such that the skew-partition $\lambda/\mu$ is a rim $p$-hook.  By definition, if $p=0$ then all partitions are $p$-cores.  Given a partition $\lambda$, we denote by $\tilde{\lambda}$ the $p$-core obtained by successively removing all rim $p$-hooks. We define the number $(|\lambda|-|\tilde{\lambda}|)/p$ to be the p-weight of $\lambda$.

Define an equivalence relation $\sim$ on $\Lambda_d$ by decreeing $\lambda \sim \mu$ if $\tilde{\lambda}=\tilde{\mu}$.

 Let $\lambda,\mu\in\Lambda_{d}$.  As a consequence of (11.6) in \cite{Kl05} we have
\begin{equation}\label{KleshEqua}
\tilde{\lambda}=\tilde{\mu} \Longleftrightarrow wt(\lambda)=wt(\mu).
\end{equation}
Therefore we index the set of equivalence classes $\Lambda_d/\sim$ by weights in $P$, i.e. a weight $\omega \in P$ corresponds to a subset (possibly empty) of $\Lambda_d$.

Let $Irr\M_d$ denote the set of simple objects in $\M_d$ up to isomorphism.  This set is naturally identified with $\Lambda_d$.  We say two simple objects in $\M_d$ are \emph{adjacent} if they occur as composition factors of some indecomposable object in $\M_d$.    Consider the equivalence relation $\approx$  on $Irr\M_d$ generated by adjacency.  Via the identification of $Irr\M_d$ with $\Lambda_d$ we obtain an equivalence relation $\approx$ on $\Lambda_d$.

\begin{theorem}[Theorem 2.12, \cite{D}]
The equivalence relations $\sim$ and $\approx$ on $\Lambda_d$ are the same.
\end{theorem}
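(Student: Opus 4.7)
The strategy is to identify $\approx$-equivalence classes with the blocks of $\M_d$, and then invoke the classical classification of blocks of polynomial $GL_n$-representations in terms of $p$-cores. By Proposition \ref{Forgetful-Functor}, the projection $\Psi_n(d):\M(d)\to\M_n(d)$ is an equivalence for $n\geq d$, so the relation $\approx$ on $\Lambda_d$ transports to the analogous relation on $Irr\,\M_n(d)$. A standard categorical fact is that the relation ``share a common indecomposable'' on the set of simples, once transitively closed, recovers precisely the block equivalence; so $\approx$ coincides with the block decomposition of $\M_n(d)$, or equivalently of the Schur algebra $S(n,d)$.

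With this reduction in hand, one invokes the block classification for Schur algebras: for $n\geq d$, two simples $L_n(\lambda),L_n(\mu)\in\M_n(d)$ lie in the same block if and only if $\tilde{\lambda}=\tilde{\mu}$. This is precisely Donkin's Theorem 2.12 in \cite{D}, extending the classical Nakayama conjecture from symmetric groups to Schur algebras. Combined with the identification above, this yields $\approx\;=\;\sim$.

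Working more directly, the forward inclusion $\approx\;\subseteq\;\sim$ can be deduced from central characters: by Lemma \ref{SCALAR} the Casimir $C_n$, and more generally all of $Z(\g_n)$, acts by a fixed central character on each block; standard Harish--Chandra type computations then show this central character depends only on the $p$-core $\tilde\lambda$. The harder direction $\sim\;\subseteq\;\approx$ is the main obstacle: for any pair of partitions $\lambda,\sigma$ differing by a rim $p$-hook, one must produce a nonsplit extension between $L(\lambda)$ and $L(\sigma)$ (or more generally an indecomposable module containing both as composition factors), and then iterate this construction to connect an arbitrary $\lambda \sim \mu$ through their common $p$-core $\tilde\lambda=\tilde\mu$. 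Producing such extensions between adjacent terms in a rim-hook chain is the nontrivial content of the theorem and is exactly what is established in \cite{D}.
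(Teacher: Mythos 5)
The paper does not prove this statement; it is quoted verbatim as Theorem 2.12 of Donkin \cite{D} and used as a black box. Your proposal ultimately does the same thing -- the decisive step is "This is precisely Donkin's Theorem 2.12 in \cite{D}," which cites the very result being asked for -- so it is not an independent proof, but that is unavoidable given the paper's own treatment. The surrounding remarks you give are accurate and helpful: $\Psi_n(d)$ of Proposition \ref{Forgetful-Functor} does transport the adjacency-generated relation $\approx$ to the block linkage relation on $\M_n(d)\simeq S(n,d)\text{-mod}$ for $n\geq d$, so the assertion is exactly the Nakayama-type block classification for Schur algebras by $p$-cores; your central-character sketch for $\approx\subseteq\sim$ and the need to produce extensions along rim-hook chains for $\sim\subseteq\approx$ correctly describe the shape of the argument in \cite{D}. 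In short, the approach matches the paper's (both defer to Donkin), and the reduction you interpose is correct, but be aware that invoking Donkin's Theorem 2.12 inside a proof of the paper's Theorem [2.12, \cite{D}] is circular if read as a self-contained argument.
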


By the above theorem, Equation (\ref{KleshEqua}) and Equation (\ref{weight}) we can label any block of $\M$ by weights $\omega \in P$, and the p-weight of a block is well-defined.  So we have the decomposition,

\begin{equation}
\label{Decomposition}
\M=\bigoplus_{\omega} \M_\omega.
\end{equation}

\subsubsection{The $\g$-categorification on $\M$}
We can now state and prove our main result:
\begin{theorem}
\label{CatTheorem} The data of
\begin{enumerate}
\item the adjoint pair of functors $(E,F)$,
\item $X \in End(E)$ and $T \in End(E^2)$, and
\item the decomposition of $\M = \bigoplus_{\omega \in P}\M_{\omega}$
\end{enumerate}
is a $\g$-categorification on the abelian $\ff$-linear category
$\M$.
\end{theorem}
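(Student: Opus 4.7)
The plan is to verify each of the eight conditions in Definition \ref{DefinitionCategorification}; almost all of the substantive work has been done in Sections 3 and 4, and the proof amounts to assembling references together with one additional short argument for condition (6).

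Conditions (1), (2), (3), (7), and (8) are direct. Exactness of $E$ and $F$ is inherited from exactness of $R_1$ and of $\ff^n\otimes\cdot$ at each finite level, and bi-adjointness is Theorems \ref{AdjointPair} and \ref{Another-Adjoint-Pair}; the endomorphisms $X$ and $T$ are constructed in Proposition \ref{PropactonE} and Lemma \ref{PropT}; the decomposition $\M=\bigoplus_\omega \M_\omega$ is (\ref{Decomposition}); and the $\bar H_n$-action on $End(E^n)$ via (\ref{EqXX}) and (\ref{EqTT}) is Proposition \ref{HeckeAction}. For condition (4), Lemma \ref{LemmaDecomposition} shows that the eigenvalues of $X$ on the Weyl filtration of $E(V(\lambda))$ lie in $\Z/p\Z$, and I will extend this to arbitrary $V\in\M$ by taking a finite composition series of $V$, writing each simple factor $L(\mu)$ as a quotient of $V(\mu)$, and invoking exactness of $E$. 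Condition (5) is Proposition \ref{Ecommutative}: the isomorphism $\xi:K(\M)\to\FF$ intertwines $[E_i],[F_i]$ with the Chevalley generators on $\FF$, and integrability of $\FF$ transfers to $K(\M)$.

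The only remaining task is condition (6). For a Weyl object $V(\lambda)$, Proposition \ref{PropWeylFiltration} together with Lemma \ref{LemmaDecomposition}(1) shows that $E_i(V(\lambda))$ has a Weyl filtration whose subquotients are the $V(\mu)$ with $\mu$ obtained from $\lambda$ by removing an $i$-box; in that case $m_i(\mu)=m_i(\lambda)-1$ and $m_k(\mu)=m_k(\lambda)$ for $k\neq i$, so by (\ref{weight}) we get $wt(\mu)=wt(\lambda)+\alpha_i$, and the block decomposition (labeled by weights via (\ref{KleshEqua})) yields $E_i(V(\lambda))\in\M_{wt(\lambda)+\alpha_i}$. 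To pass from Weyl objects to a general $V\in\M_\omega$, I filter $V$ by simples $L(\mu)$ with $wt(\mu)=\omega$, present each $L(\mu)$ as a quotient of $V(\mu)$, and apply exactness of $E_i$. The argument for $F_i$ is identical, using Lemma \ref{LemmaDecomposition}(2) and the fact that adding a $j$-box shifts the weight by $-\alpha_j$. The real obstacle in the whole theorem is Lemma \ref{LemmaDecomposition} itself, which ties the $X$- and $Y$-eigenvalues on Weyl subquotients to contents of added and removed boxes; once that dictionary between the representation-theoretic scalars and the Fock-space combinatorics is in place, Theorem \ref{CatTheorem} is essentially bookkeeping.
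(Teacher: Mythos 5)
Your proposal follows essentially the same route as the paper: the theorem is an assembly of the results already established, and your citations for conditions (1)--(5), (7), (8) match the paper's (the paper deduces (6) together with (5) from Proposition \ref{Ecommutative}, whereas you argue (6) directly from Proposition \ref{PropWeylFiltration}, Lemma \ref{LemmaDecomposition} and the weight formula (\ref{weight}), passing to general objects by composition series; that variant is correct, since a Weyl object $V(\mu)$ lies in the single block labelled $wt(\mu)$ and exactness of $E_i$, $F_i$ transports the statement along filtrations).

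There is, however, one concrete missing link. Definition \ref{DefinitionCategorification} defines $F_i$ as the generalized $i$-eigensubfunctor of the endomorphism $X^{\vee}\in End(F)$ induced from $X$ by adjunction, whereas every statement you quote about $F_i$ --- Definition \ref{TheFunctorFi}, Lemma \ref{LemmaDecomposition}(2), Proposition \ref{Ecommutative} --- concerns the functors cut out by the explicitly defined endomorphism $Y=(Y_n)$. Your verification of conditions (5) and (6) (and of the $F$-half of the eigenvalue statements) is therefore, as written, about the $Y$-eigenspace functors, not the ones the definition demands. The gap is closed by invoking Theorem \ref{Compatibility-Of-Endomorphisms}, which asserts $X^{\vee}=Y$ in $End(F)$, so the two families of subfunctors coincide; the paper's proof cites this identification explicitly, and your write-up should too.
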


\begin{proof}
The adjointness of $(E,F)$ is Theorem \ref{AdjointPair}.  The
endomorphism $X$ on $E$ is defined in Proposition \ref{PropactonE},
while $T$ is defined in Lemma \ref{PropT}.  The decomposition of
$\M$ into subcategories is Equation (\ref{Decomposition}).

Now we must check that conditions (4)-(8) of Definition
\ref{DefinitionCategorification} are satisfied.  Condition (4)
follows from Lemma \ref{LemmaDecomposition}.  Now, since
$X^{\vee}=Y$ (cf. Theorem \ref{Compatibility-Of-Endomorphisms}), the
functors $F_i$ we defined (cf. Definition \ref{TheFunctorFi}) agree
with the functors $F_i$ that arise as generalized eigenspaces of
$X^{\vee}$ acting on $F$.  Therefore, conditions (5) and (6) are a consequence
of Proposition \ref{Ecommutative}.  Condition (7) is Theorem
\ref{Another-Adjoint-Pair}, while condition (8) is Proposition
\ref{HeckeAction}.
\end{proof}

By \cite[Section 12]{Ka},  any weight $\omega$ appearing in Fock space
is of the form $\sigma(\omega_0)-\ell \delta$, where $\omega_0$ is
the first fundamental weight and $\sigma$ is some element in the
affine Weyl group of $\g$.  By \cite[Proposition, 11.1.5]{Kl05},
$\ell $ is exactly the p-weight of the corresponding block.
Therefore the weights of any two blocks are conjugate by some
element of the affine Weyl group if and only if  they have the same
p-weight. As a consequence of Chuang-Rouquier theory we obtain:

\begin{corollary}\label{Derived_Equivalence}
If two blocks of $\M$ have the same p-weight then
they are derived equivalence.
 \end{corollary}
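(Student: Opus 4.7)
The plan is to combine Theorem \ref{CatTheorem}, which gives the $\g$-categorification on $\M$, with the Chuang-Rouquier machinery and the combinatorial fact cited from \cite{Kl05} that weights in Fock space lying in the same affine Weyl group orbit are precisely those with the same $p$-weight.

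First, I would invoke the main theorem of Chuang-Rouquier \cite{CR}, applied in the generality of $\g$-categorifications (cf.\ Rouquier's Theorem 5.27 in \cite{R}). Given a $\g$-categorification on an abelian $\ff$-linear category $\mathcal{V}=\bigoplus_\omega \mathcal{V}_\omega$, for each simple root $\alpha_i$ of $\g$ one obtains a complex of functors $\Theta_i$, built from $E_i$, $F_i$, and the degenerate affine Hecke algebra action on $E^n$, which induces a triangulated equivalence $D^b(\mathcal{V}_\omega) \simeq D^b(\mathcal{V}_{s_i(\omega)})$ whenever $\omega$ and $s_i(\omega)$ both index nonzero blocks. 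Applied to $\mathcal{V}=\M$ via Theorem \ref{CatTheorem}, this produces, for each simple reflection $s_i$ of the affine Weyl group, a derived equivalence between $\M_\omega$ and $\M_{s_i(\omega)}$.

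Next, since the affine Weyl group of $\g$ is generated by the simple reflections $\{s_i : i \in \Z/p\Z\}$, a straightforward induction on the length of a reduced expression $\sigma = s_{i_1}\cdots s_{i_r}$ yields a derived equivalence $D^b(\M_\omega) \simeq D^b(\M_{\sigma(\omega)})$ for every element $\sigma$ of the affine Weyl group, whenever both blocks are nonzero.

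Finally, I would translate the orbit condition into the $p$-weight condition using the facts already cited in the paper just before the corollary: every weight of $\mathcal{F}$ is of the form $\sigma(\omega_0) - \ell \delta$ with $\ell$ the $p$-weight of the corresponding block (by \cite[Proposition 11.1.5]{Kl05}), and two such weights lie in the same affine Weyl group orbit if and only if their $p$-weights coincide. Thus any two nonzero blocks $\M_\omega$ and $\M_{\omega'}$ with the same $p$-weight have their weights conjugate by some $\sigma$ in the affine Weyl group, and the chain of derived equivalences from the previous paragraph produces $D^b(\M_\omega) \simeq D^b(\M_{\omega'})$. The main obstacle is really bookkeeping rather than substance: one must check that the intermediate blocks $\M_{s_{i_k}\cdots s_{i_1}(\omega)}$ appearing along the chosen reduced expression are themselves nonzero so that the Chuang-Rouquier equivalence applies at each step; this follows from the fact that the Fock space representation is integrable, so the whole Weyl group orbit of a weight of $\mathcal{F}$ consists of weights of $\mathcal{F}$, and by the block decomposition each such weight corresponds to a nonzero block of $\M$.
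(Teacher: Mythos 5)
Your proposal is correct and follows essentially the same route the paper intends: invoke the Chuang--Rouquier Rickard-complex equivalences attached to each $\mathfrak{sl}_2$ inside the $\g$-categorification of Theorem \ref{CatTheorem}, compose them along a reduced word, and translate the Weyl-orbit condition into the $p$-weight condition via the facts from \cite{Ka} and \cite{Kl05} recalled just before the corollary. Your added remark that integrability of $\mathcal{F}$ guarantees the intermediate blocks along the reduced expression are nonzero is a useful detail the paper leaves implicit.
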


\subsection{The Misra-Miwa crystal from the category $\M$}
\label{SectionCrystal}

We now show how to recover the Misra-Miwa crystal of Fock space (cf.
\cite{MM}) from the category $\M$ . We first briefly recall the
definition of this crystal.  For this we need to first also recall
several combinatorial notions (see \cite{BK} for a more thorough
discussion of these notions).

%Recall the definition of normal, good (conormal, cogood) box (Section 22.1,[Kleshchev]).
%A box $(k,l)$ is called a i-box if  the content $l-k=i$.
Label all the $i$-addable boxes of a partition $\lambda$ by $+$ and
all $i$-removable boxes by $-$. The \emph{$i$-signature} of
$\lambda$ is the sequence of $+$ and $-$ obtained by going along the
rim of the Young diagram from bottom left to top right and reading
off all the signs. The \emph{reduced $i$-signature} of $\lambda$ is
obtained from the $i$-signature by successively erasing all
neighboring pairs of the form $+-$. Note the reduced $i$-signature
always looks like a sequence of $-$'s followed by $+$'s. Boxes
corresponding to a $-$ in the reduced $i$-signature are called
\emph{$i$-normal}, boxes corresponding to a $+$ are called
\emph{$i$-conormal}. The rightmost $i$-normal box (corresponding to
the rightmost $-$ in the reduced $i$-signature) is called
\emph{$i$-good}, and the leftmost $i$-conormal box (corresponding to
the leftmost $+$ in the reduced $i$-signature) is called
\emph{$i$-cogood}.

The Misra-Miwa crystal of Fock space (cf. \cite{LLT}) consists of
the set $\Lambda$ together with maps
$$
wt:\Lambda \rightarrow P, \tilde{e}_i,\tilde{f}_i:\Lambda
\rightarrow \Lambda \cup\{0\}, \epsilon_i,\phi_i:\Lambda \rightarrow
\Z
$$
defined as follows:
\begin{enumerate}
\item The $wt$ function is defined above in Section \ref{DecompositionM}.
\item The operator $\tilde{e}_i$ is given by the rule: if there exists an  $i$-good box for $\lambda$, then $\tilde{e}_i(\lambda)=\mu$, where $\mu$ is  obtained by removing this $i$-good box  from  $\lambda$;  otherwise $\tilde{e}_i(\lambda)=0$.
\item The operator $\tilde{f}_i$ is given by the rule: if there exists an $i$-cogood box for $\lambda$, then $\tilde{f}_i(\lambda)=\mu$ where $\mu$ is obtained from $\lambda$ by adding this $i$-cogood box;  otherwise $\tilde{f}_i(\lambda)=0$.
\item$\epsilon_i(\lambda)$ is the number of $i$-normal boxes of $\lambda$
\item$\phi_i(\lambda)$ is the number of $i$-conormal boxes of $\lambda$.
\end{enumerate}

We now reformulate a theorem of Brundan and Kleshchev \cite{BK} in
our setting.
\begin{theorem} For any simple object $L(\lambda)$ in $\M$, if  $\lambda$ has an $i$-cogood box, then the socle of  $F_i(L(\lambda)) $ is $L(\mu)$, where $\mu$ is obtained from $\lambda$ by adding the $i$-cogood box.  Otherwise $F_i(L(\lambda))=0$.
\end{theorem}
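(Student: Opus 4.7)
The plan is to reduce the statement to its analog in the category $\M_n$ for $n$ sufficiently large, and then invoke the theorem of Brundan and Kleshchev in that finite-dimensional setting.

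First, fix $\lambda \in \Lambda_k$, so that $L(\lambda) \in \M(k)$ and $F_i L(\lambda) \in \M(k+1)$. Choose $n \geq k+1$ large enough that the projection functors $\Psi_n(k)$ and $\Psi_n(k+1)$ are equivalences by Proposition \ref{Forgetful-Functor}. By definition of $\Psi^{-1}$, we have $\Psi_n(L(\lambda)) \cong L_n(\lambda)$. The first step is to establish that the diagram
$$
\xymatrix{
\M(k) \ar[r]^{\Psi_n} \ar[d]_{F_i} & \M_n(k) \ar[d]^{F_{i,n}} \\
\M(k+1) \ar[r]^{\Psi_n} & \M_n(k+1)
}
$$
commutes up to natural isomorphism, where $F_{i,n}(V_n)$ denotes the $i$-generalized eigenspace of the operator $Y_n$ acting on $\ff^n \otimes V_n$. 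This follows essentially by unpacking definitions: $F = St \otimes (\cdot)$ is defined componentwise, and the eigenspace decomposition induced by $Y = (Y_n)$ is compatible with the projection functors, as is implicit in the proof of Lemma \ref{LemmaDecomposition}.

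Second, the theorem of Brundan and Kleshchev (\cite{BK}) — in the form that applies to polynomial representations of $GL_n(\ff)$, equivalently via Schur-Weyl duality to symmetric group representations — asserts precisely that if $\lambda$ has an $i$-cogood box, then the socle of $F_{i,n}(L_n(\lambda))$ is isomorphic to $L_n(\mu)$, where $\mu$ is obtained from $\lambda$ by adding that box; and $F_{i,n}(L_n(\lambda)) = 0$ otherwise. Since $\Psi_n$ is an equivalence on the relevant degree summands, it preserves simple objects, kernels, and hence socles, so applying $\Psi_n^{-1}$ gives the desired conclusion for $F_i L(\lambda)$.

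The main subtlety, and the step requiring the most care, is the commutativity of the diagram in the first step, together with the verification that $F_{i,n}$ defined via the eigenspaces of $Y_n$ truly coincides with the functor for which Brundan-Kleshchev prove their socle theorem. The eigenvalue computation is carried out in Lemma \ref{LemmaDecomposition} via the coproduct formula $Y_n = \tfrac{1}{2}(\Delta(C_n) - C_n \otimes 1 - 1 \otimes C_n)$, which identifies the generalized $i$-eigenspace of $Y_n$ on $\ff^n \otimes V_n$ with the direct summand on which the Casimir acts by the appropriate scalar determined by adding an $i$-box; this is exactly the summand used in the Brundan-Kleshchev branching theorem. Once this identification is clear, the theorem is a direct transfer of their result through the equivalence $\Psi_n$.
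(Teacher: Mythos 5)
Your proposal is correct and follows essentially the same route as the paper: reduce via the equivalence $\Psi_n$ (for $n\gg 0$) to the statement about the socle of $(F_i)_n(L_n(\lambda))$ in $\M_n$, and then invoke Brundan--Kleshchev. The identification you flag as the main subtlety --- that the generalized $i$-eigenspace of $Y_n$ on $\ff^n\otimes L_n(\lambda)$ is the functor to which their socle theorem applies --- is handled in the paper by matching $(F_i)_n$ with their translation functor $Tr^i$ via their Theorem A(i), which is the same eigenvalue/block computation you describe via the Casimir coproduct formula.
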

\begin{proof}
The functor $F:\M\rightarrow \M$ is given by $F=(F_n)$, where
$F_n:\M_n\rightarrow \M_n$ is tensoring with the standard module
$\ff^n$. Similarly, $F_i=((F_i)_n)$, where $(F_i)_n$ is the
generalized $i$-eigen-subfunctor of $Y_n$.

It suffices to show that given a partition $\lambda$, then for $n
\gg 0$ the socle of $(F_i)_n(L_n(\lambda))$ is
$L_n(\lambda+\epsilon_k)$ if the box $(k,\lambda_k+1)$ is cogood and
$\lambda_k +1-k=i$ (here $\epsilon_k$ denotes the weight
$(0,...,1,...,0)$, where the one is in the $k^{th}$ position), and
otherwise it is $0$. In \cite{BK}, the translation functor $Tr^i$ is
introduced, and by Theorem $A(i)$ \cite{BK}, it is easy to see that
$Tr^i$ coincides with $(F_i)_n$. Then Theorem $B(i)$ in \cite{BK}
implies our theorem.
\end{proof}

%Given partitions $\lambda$ and $\mu$, assume $\lambda$ is obtained from %$\mu$ by adding a box. It is known that this box is cogood or conormal for %$\mu$ if and only if  it is good or normal for $\lambda$.

\begin{corollary}
For any simple object $L(\lambda)$ in $\M$, if  $\lambda$ has an
$i$-good box, then the socle of  $E_i(L(\lambda)) $ is $L(\mu)$,
where $\mu$ is obtained from $\lambda$ by deleting the $i$-good box.
Otherwise $E_i(L(\lambda))=0$.
 \end{corollary}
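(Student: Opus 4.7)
The plan is to deduce this from the preceding theorem on $F_i$ by combining adjunction with the duality functor $\mathbb{D}$, exploiting the natural symmetry between $i$-good and $i$-cogood boxes on the Misra--Miwa crystal.

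First I would use the adjoint pair $(E_i,F_i)$ from Proposition \ref{Sub-Adjoint-pair} together with Schur's lemma (Lemma \ref{Schurslemma}) to compute the cosocle of $E_i(L(\lambda))$. For any simple $L(\mu)$,
\[
Hom_{\M}(E_i(L(\lambda)),L(\mu))\cong Hom_{\M}(L(\lambda),F_i(L(\mu))),
\]
and by the preceding theorem the right-hand side is nonzero precisely when $\lambda$ is obtained from $\mu$ by adding the $i$-cogood box of $\mu$, in which case it is one-dimensional. Invoking the standard crystal identity that $\tilde e_i$ and $\tilde f_i$ are mutually inverse partial bijections on the Misra--Miwa crystal (removing the $i$-good box of $\lambda$ and adding the $i$-cogood box to the result returns $\lambda$), this translates to: the cosocle of $E_i(L(\lambda))$ is $L(\mu)$ with $\mu$ obtained from $\lambda$ by deleting the $i$-good box, whenever such a box exists, and is $0$ otherwise. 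Since every nonzero object of $\M$ has finite length and hence a nonzero cosocle, in the latter case $E_i(L(\lambda))=0$.

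Next I would transfer this statement from cosocle to socle using the duality functor $\mathbb{D}$. The simple object $L(\lambda)$ is self-dual ($\mathbb{D}(L(\lambda))\cong L(\lambda)$) by the classical fact that contravariant duals of simple polynomial $GL_n$-modules are themselves, and Lemma \ref{Commutativity} supplies $\mathbb{D}\circ E\circ\mathbb{D}\cong E$. The key additional point is that this isomorphism intertwines the endomorphism $X$ on both sides, hence refines to $\mathbb{D}\circ E_i\circ\mathbb{D}\cong E_i$ for every $i$. This reduces to checking that the element $X_n=\sum_{i=1}^n x_{n+1,i}x_{i,n+1}-n$ is invariant under the anti-involution $\tau$ of $U(\g_{n+1})$ determined by $\tau(x_{a,b})=x_{b,a}$ (the Lie-algebra incarnation of $g\mapsto g^T$ used in defining $\mathbb{D}$); a direct computation gives
\[
\tau(x_{n+1,i}x_{i,n+1})=\tau(x_{i,n+1})\tau(x_{n+1,i})=x_{n+1,i}x_{i,n+1},
\]
so $\tau(X_n)=X_n$, as required.

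Combining these ingredients,
\[
\mathbb{D}(E_i(L(\lambda)))\cong E_i(\mathbb{D}(L(\lambda)))\cong E_i(L(\lambda)),
\]
and since $\mathbb{D}$ is a contravariant equivalence that interchanges socle with cosocle, the socle of $E_i(L(\lambda))$ is isomorphic to the cosocle identified in the first step---namely $L(\mu)$ with $\mu$ obtained from $\lambda$ by removing the $i$-good box, or $0$ when no such box exists. The main obstacle I anticipate is the compatibility of $\mathbb{D}$ with the $X$-eigenspace decomposition of $E$; once the $\tau$-invariance of $X_n$ is verified, the remainder of the argument is essentially formal.
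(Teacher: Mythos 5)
Your proof is correct and takes essentially the same route as the paper: both arguments rest on the adjunction $(E_i,F_i)$ of Proposition \ref{Sub-Adjoint-pair}, Schur's lemma, the preceding theorem on the socle of $F_i(L(\mu))$, and the duality $\mathbb{D}$ together with $\mathbb{D}\circ E_i\circ\mathbb{D}\cong E_i$ and self-duality of simples; the only (cosmetic) difference is that you dualize the object $E_i(L(\lambda))$ to pass from cosocle to socle, whereas the paper dualizes at the level of Hom spaces. Your explicit verification that $X_n$ is fixed by the transpose anti-involution, so that $\mathbb{D}\circ E\circ\mathbb{D}\cong E$ really does refine to the generalized eigensubfunctors, spells out a point the paper only asserts (``and hence with $E_i$'').
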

\begin{proof}
Consider two simple objects $L(\lambda)$ and $L(\mu)$. By
Proposition \ref{Sub-Adjoint-pair}, we have
\begin{equation*}
Hom_{\M}(E_i(L(\lambda)),L(\mu))=Hom_{\M}(L(\lambda),F_i(L(\mu))).
\end{equation*}
Recall that on the category $\M$, there is a contravariant duality
$\mathbb{D}$, which maps any simple object to itself.  By Lemma
\ref{Commutativity}, $\mathbb{D}$ commutes with $E$ (and hence with
$E_i$), so therefore
$$Hom_{\M}(E_i(L(\lambda)),L(\mu))=Hom_{\M}(L(\mu),E_i(L(\lambda)))$$
and so we have
\begin{equation}
\label{EqCrystal}
Hom_{\M}(L(\lambda),F_i(L(\mu)))=Hom_{\M}(L(\mu),E_i(L(\lambda))).
\end{equation}
Now note that $\mu$ is obtained from $\lambda$ by deleting the
$i$-good box if and only if $\lambda$ is obtained from $\mu$ by
adding this box (which is $i$-cogood for $\mu$). Therefore, by the
above theorem, Schur's lemma (Lemma \ref{Schurslemma}), and equation
(\ref{EqCrystal}), the corollary follows.
%
%
%
%the socle of $F_i(L(\mu))$ is $L(\lambda)$, which implies that
%$$Hom(L(\lambda),F_i(L(\mu))) \not =0,$$
%and hence
%$$Hom(L(\mu),E_i(L(\lambda)))\not =0.$$
%So the simple object $L(\mu)$ is contained in the socle of $E_i(L(\lambda)).$ %Then by above theorem again, $L(\mu)$ coincides with the socle of $E_i(L(\lambda))$.
\end{proof}

Let $\mathbb{B}$ be the set of isomorphism classes of simple objects
in $\M$, namely $$\mathbb{B}=\{L(\lambda):\lambda\in \Lambda \}.$$
By the above theorem and corollary, we can define operators
$$\tilde{E}_i,\tilde{F}_i:\mathbb{B} \rightarrow \mathbb{B}\cup
\{0\}$$ as follows: for any simple object $L(\lambda)$, set
$\tilde{E}_i(L(\lambda))$ be the socle of $E_i(L(\lambda))$, and set
$\tilde{F}_i(L(\lambda))$ be the socle of $F_i(L(\lambda))$.
Moreover, let
$$\varepsilon_i(L(\lambda))=max\{m:\tilde{E}_i^m(L(\lambda))\not =0
\}$$ and
$$\varphi_i(L(\lambda))=max\{m:\tilde{F}_i^m(L(\lambda))\not =0\}$$
Finally, set $wt(L(\lambda))=wt(\lambda)$. By the above theorem and
corollary, we obtain:
\begin{theorem}\label{Crystal}
The data $(\mathbb{B},\tilde{E}_i,\tilde{F}_i, \varepsilon_i,
\varphi_i, wt)$ defines the crystal of Fock space, which is
isomorphic to the Misra-Miwa crystal
$(\Lambda,\tilde{e}_i,\tilde{f}_i,\epsilon_i,\phi_i, wt)$ described
above.
\end{theorem}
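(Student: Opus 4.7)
The plan is to verify that the five pieces of crystal data on $\mathbb{B}$ agree with the corresponding five pieces on $\Lambda$ under the bijection $L(\lambda)\leftrightarrow \lambda$. Since this bijection is precisely the parametrization of simple objects of $\M$ by partitions established earlier, it suffices to check compatibility of $\tilde{E}_i,\tilde{F}_i,\varepsilon_i,\varphi_i$, and $wt$ with their Misra--Miwa analogues.

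I would handle the weight function first: by the construction of the decomposition $\M=\bigoplus_{\omega}\M_\omega$ in Section \ref{DecompositionM}, the block of $L(\lambda)$ is labelled by the weight of $v_\lambda$ in Fock space, so $wt(L(\lambda))=wt(\lambda)$ holds tautologically. Next, the compatibility of the crystal operators is immediate from the theorem and corollary immediately preceding the statement: they say that the socle of $F_i(L(\lambda))$ is $L(\mu)$ with $\mu$ obtained from $\lambda$ by adding the $i$-cogood box (when it exists, and is $0$ otherwise), and analogously for $E_i$ with the $i$-good box. Unwinding the definitions of $\tilde{F}_i, \tilde{f}_i$ and $\tilde{E}_i, \tilde{e}_i$ gives the required identities $\tilde{E}_i(L(\lambda))=L(\tilde{e}_i(\lambda))$ and $\tilde{F}_i(L(\lambda))=L(\tilde{f}_i(\lambda))$, with the convention $L(0)=0$.

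For $\varepsilon_i$ and $\varphi_i$, I would iterate the previous step. By induction on $m$, $\tilde{E}_i^m(L(\lambda))=L(\tilde{e}_i^m(\lambda))$ as long as the latter is nonzero, and similarly for $\tilde{F}_i^m$. Therefore
\[
\varepsilon_i(L(\lambda))=\max\{m:\tilde{e}_i^m(\lambda)\neq 0\},\qquad \varphi_i(L(\lambda))=\max\{m:\tilde{f}_i^m(\lambda)\neq 0\}.
\]
It then remains to recall (or verify) the standard combinatorial identities
\[
\max\{m:\tilde{e}_i^m(\lambda)\neq 0\}=\epsilon_i(\lambda),\qquad \max\{m:\tilde{f}_i^m(\lambda)\neq 0\}=\phi_i(\lambda),
\]
i.e.\ that the number of $i$-normal (resp.\ $i$-conormal) boxes of $\lambda$ equals the number of times an $i$-good (resp.\ $i$-cogood) box can be successively removed (resp.\ added). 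This is a standard bracket-cancellation argument on the reduced $i$-signature: removing the rightmost $-$ in the reduced signature of $\lambda$ produces a partition whose reduced $i$-signature has one fewer $-$, and then one iterates.

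The argument is essentially a direct reading-off from the preceding theorem and corollary, so there is no serious obstacle. The only non-formal input is the combinatorial identity in the last paragraph, but this is a well-known feature of the Misra--Miwa crystal (see, e.g., \cite{BK}, \cite{LLT}) and may simply be cited rather than reproved.
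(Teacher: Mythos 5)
Your proposal is correct and matches the paper's (very brief) argument: the paper simply deduces the theorem from the preceding socle theorem and corollary together with the definitions of $\tilde{E}_i,\tilde{F}_i,\varepsilon_i,\varphi_i,wt$, exactly as you do. The extra details you supply (the iteration giving $\varepsilon_i,\varphi_i$ and the standard reduced-signature fact) are the routine combinatorics the paper leaves implicit, so there is no substantive difference in approach.
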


\appendix

\section{The general linear group}

In this appendix we collect some standard/technical results that we
use in the body of the paper.  In Section \ref{SecA.2} we recall
some standard combinatorics related to Weyl modules and their
interaction with certain tensor and restriction functors.  In
Section \ref{SecA.3} we prove some technical lemmas.

\label{AppendixB}

\subsection{Weyl filtrations}
\label{SecA.2}

\begin{theorem}
\label{Donkin}
\begin{enumerate}
\item Let $V_{n+1}$ be a Weyl module.  Then the $G_n$-module $R_i(V_{n+1})$ admits a Weyl filtration for all $i \in \Z$ (Proposition II.4.24,\cite{J}).
\item For any $V_n,W_{n} \in Rep(G_n)$ admitting Weyl filtrations, the $G_n$-module $V_n\otimes W_n$ also admits a Weyl filtration (Proposition II.4.21,\cite{J}).
\end{enumerate}
\end{theorem}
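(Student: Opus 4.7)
The plan is to use the standard Ext-criterion for Weyl filtrations: a finite-dimensional $G_n$-module $V$ admits a Weyl filtration if and only if $\mathrm{Ext}^1_{G_n}(V, H^0(\mu)) = 0$ for every $\mu \in X(T_n)_+$. Dually, a module has a good filtration (by $H^0(\mu)$'s) iff $\mathrm{Ext}^1_{G_n}(V_n(\lambda), W) = 0$ for all $\lambda$. Combined with the contravariant duality of Section II.2 of \cite{J}, which interchanges Weyl and costandard modules, both parts of the theorem reduce to cohomology-vanishing assertions.

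For part (1), applying the contravariant duality reduces the claim to showing that $R(H^0(\mu))$ admits a good filtration as a $G_n \times G_1$-module; the weight decomposition under the central $G_1$ then splits this filtration into the individual pieces $R_i(H^0(\mu))$, and dualizing again produces the desired Weyl filtration on each $R_i(V_{n+1}(\lambda))$. By transitivity of induction, $H^0(\mu) = \mathrm{ind}_{Q}^{G_{n+1}}\mathrm{ind}_{B_{n+1}}^{Q}(\ff_\mu)$, where $Q$ is the parabolic of $G_{n+1}$ with Levi factor $G_n \times G_1$. A Schubert-cell filtration of the partial flag variety $Q/B_{n+1}$ together with Kempf vanishing produces a good filtration at the Levi level, and a further application of Kempf vanishing for the projection $G_{n+1}/B_{n+1} \to G_{n+1}/Q$ transports this to a good filtration of the restriction.

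For part (2), again by duality, it suffices to show that the class of modules with good filtration is closed under tensor products. One reduces by dévissage on the filtration of (say) $W_n$ to the base case that $H^0(\lambda) \otimes H^0(\mu)$ admits a good filtration for arbitrary dominant $\lambda, \mu$, and then argues inductively via the Ext-criterion that each short exact sequence $0 \to W' \to W_n \to H^0(\mu') \to 0$ remains exact after tensoring with $V_n$ and extends the good filtration on the tensor product.

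The main obstacle is precisely this base case: showing that $H^0(\lambda) \otimes H^0(\mu)$ admits a good filtration. In characteristic zero this is immediate from complete reducibility, but in positive characteristic it is a deep theorem of Donkin, Mathieu, and others. Donkin's original proof proceeds via the Steinberg module together with an elaborate induction on the root poset, while Mathieu's more conceptual proof uses Frobenius splittings of the flag variety that are compatible with Schubert subvarieties. Reproducing either argument from scratch lies well beyond the scope of the present paper, and I would instead cite the result directly from Jantzen's book as the authors do.
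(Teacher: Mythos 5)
The paper does not prove this statement at all: it is quoted in the appendix as a standard background result, with the two parts cited to Jantzen (Propositions II.4.24 and II.4.21), so your ultimate recommendation --- cite Jantzen rather than reprove --- is exactly what the authors do, and your identification of the genuinely deep input (the Donkin--Mathieu theorem that tensor products of modules with good filtrations again have good filtrations) is accurate. One caveat on your sketch of part (1): the assertion that a Schubert-cell filtration of $Q/B_{n+1}$ together with Kempf vanishing ``produces a good filtration at the Levi level'' understates the difficulty --- the statement that restriction to a Levi subgroup preserves good filtrations is itself a nontrivial theorem of Donkin (completed in general by Mathieu's Frobenius-splitting methods), and it is precisely the content of the cited Proposition II.4.24, not something that falls out of Kempf vanishing plus a cell decomposition in a few lines (one needs vanishing for the cell closures, i.e.\ excellent-filtration technology in the style of van der Kallen). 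For the specific case needed here, namely $G_{n+1}$ restricted to the Levi $G_n \times G_1$ of $GL_{n+1}$, there is a more elementary classical route via explicit standard (bideterminant) bases of Weyl modules for $GL_n$, which gives the branching filtration of $R(V_{n+1}(\lambda))$ directly; your observation that the central $G_1$-weight decomposition then splits the filtration into the pieces $R_i$ is correct and is how the paper uses the result (Lemma \ref{Weyl-Subquotient}). In short, your proposal is consistent with the paper's treatment, provided the Levi-restriction step is likewise cited rather than claimed as routine.
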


We will make use of the following classical result about branching
from $G_{n+1}$ to $G_{n}$ in characteristic zero.  (Recall that if
$p=0$ then $Rep(G_{n+1})$ is a semisimple category.)

\begin{lemma}\label{Multiplicity-Free}
Suppose $p=0$ and let $V \in Rep(G_{n+1})$ be a simple module.
\begin{enumerate}
\item  The $G_n$-module $R(V)$ is a multiplicity-free.  In particular, if $n \geq k$ and $\lambda \in \Lambda_k$ then
\begin{equation}
\label{Eq13} R_{1}(V_{n+1}(\lambda)) \simeq \bigoplus V_n(\mu)
\end{equation}
the sum over all $\mu \in \Lambda_{k-1}$ such that $ \xymatrix{
 \mu \ar[r] & \lambda}
$ (cf. Theorem 8.1.1, \cite{GW}).
\item
The $G_n$-module $\ff^n \otimes V$ is a multiplicity-free.
Precisely,
\begin{equation}
\label{Eq14} \ff^n \otimes V_n(\lambda) \simeq \bigoplus V_n(\mu)
\end{equation}
where the sum is over all $\mu$ such that $ \xymatrix{
 \lambda \ar[r] & \mu}
$ (cf. Corollary 9.2.4, \cite{GW}).
\end{enumerate}
\end{lemma}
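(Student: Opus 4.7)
The plan is to reduce both parts to classical multiplicity-free branching rules for $GL_n$ in characteristic zero, exactly as the explicit citations to \cite{GW} indicate. I will outline the steps for each.

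For part (1), my first step is to invoke the Gelfand-Tsetlin branching theorem for $G_{n+1} \supset G_n \times G_1$: the restriction $V_{n+1}(\lambda)|_{G_n \times G_1}$ decomposes multiplicity-freely as $\bigoplus_\mu V_n(\mu) \otimes \chi_{|\lambda|-|\mu|}$, where $\mu$ ranges over partitions interlacing $\lambda$ in the sense $\lambda_i \geq \mu_i \geq \lambda_{i+1}$, and $\chi_k$ denotes the weight-$k$ character of $G_1$. Next I would observe that $R_1$ is by definition the projection onto the $G_1$-weight-one subspace, so it extracts precisely those summands with $|\lambda|-|\mu|=1$. Under the interlacing constraint this forces $\mu$ to be exactly a partition obtained from $\lambda$ by deleting a single box, which in the paper's notation are the $\mu$ with $\mu \to \lambda$. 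Each occurs with multiplicity one, yielding (\ref{Eq13}); summing over all $G_1$-weights also gives the multiplicity-freeness of $R(V)$ asserted in the first sentence of (1).

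For part (2), I would directly cite Pieri's rule. The standard module $\ff^n$ is the irreducible $G_n$-module of highest weight $(1,0,\ldots,0)$, so Pieri's formula yields $\ff^n \otimes V_n(\lambda) \simeq \bigoplus_\mu V_n(\mu)$, where $\mu$ runs over partitions obtained from $\lambda$ by adding a single box, each occurring once. This is precisely (\ref{Eq14}). If one preferred to avoid invoking Pieri separately, part (2) can alternatively be bootstrapped from part (1) by adjunction (restriction-then-induction from $G_n$ to $G_{n+1}$ along the same split), but direct citation is shortest.

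There is essentially no technical obstacle, since both decompositions are classical and the cited references supply full proofs. The only minor bookkeeping is the translation between the interlacing form of the branching rule and the add/remove-a-box language used elsewhere in the paper, which is immediate once the degree constraint $|\mu|=|\lambda|\pm 1$ is imposed on interlacing pairs.
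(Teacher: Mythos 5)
Your proposal is correct and matches the paper's intent: the lemma is stated in the paper without a written-out proof, simply as a citation to the classical branching rule (Theorem 8.1.1 of Goodman--Wallach) and Pieri's rule (Corollary 9.2.4 there), which are exactly the two ingredients you invoke. Your bookkeeping — identifying $R_1$ as the $G_1$-weight-one projection, imposing $|\lambda|-|\mu|=1$ on interlacing pairs to recover the remove-a-box condition, and recognizing $\ff^n$ as the highest weight $(1,0,\ldots,0)$ module so Pieri applies — is accurate and is the translation the paper leaves implicit.
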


Now we prove the analogue of this lemma for positive characteristic.

\begin{lemma}
\label{Weyl-Subquotient} Let $n \geq k$, $\lambda \in \Lambda_k$ and
consider the Weyl module $V_n(\lambda)$. \begin{enumerate}
\item
 Then $R_1(V_{n+1}(\lambda))$ has a Weyl filtration and the corresponding Weyl factors occur with multiplicity one and have precisely those highest weights that appear in the right side of (\ref{Eq13}).

\item Similarly, $\ff^n \otimes V_n(\lambda)$ has a Weyl filtration and the corresponding Weyl factors occur with multiplicity one and have precisely those highest weights that appear in the right side of (\ref{Eq14}).
\end{enumerate}
\end{lemma}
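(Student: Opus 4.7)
The plan is to reduce both parts to the characteristic zero computation in Lemma \ref{Multiplicity-Free} via a character-theoretic argument. The key observation is that the existence of a Weyl filtration is provided by Theorem \ref{Donkin}, and once we know a module admits a Weyl filtration, the multiplicities of the Weyl factors are completely determined by the formal character of the module (since Weyl characters are the Weyl characters of the corresponding Weyl modules over $\mathbb{C}$ and form a linearly independent set in $\mathbb{Z}[X(T_n)]^{\mathfrak{S}_n}$). Therefore it suffices to compare characters.

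For part (1), I would first apply Theorem \ref{Donkin}(1) to know that $R_1(V_{n+1}(\lambda))$ has a Weyl filtration over $\ff$. Next, I note that the formal character of a Weyl module $V_m(\mu)$ is independent of the characteristic of the ground field (this is the Weyl character formula, or equivalently the fact that the Weyl module is defined integrally via a standard basis given by semistandard tableaux). Consequently, the character of $R_1(V_{n+1}(\lambda))$, which is the weight-$1$ part of the restricted character of $V_{n+1}(\lambda)$ to $T_n \times G_1$, is the same over $\ff$ as over $\mathbb{C}$. Over $\mathbb{C}$, semisimplicity together with Lemma \ref{Multiplicity-Free}(1) gives
\[
\mathrm{char}\, R_1(V_{n+1}(\lambda)) = \sum_{\mu} \mathrm{char}\, V_n(\mu),
\]
the sum running over partitions $\mu$ with $\xymatrix{\mu \ar[r] & \lambda}$, each occurring once. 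By linear independence of Weyl characters, this forces the Weyl factors in the filtration to be exactly those Weyl modules, each with multiplicity one.

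Part (2) is entirely analogous: Theorem \ref{Donkin}(2) applied to the natural module $\ff^n$ (which is itself the Weyl module $V_n((1,0,\dots,0))$) and $V_n(\lambda)$ guarantees that $\ff^n \otimes V_n(\lambda)$ has a Weyl filtration. The character of this tensor product is again characteristic-independent (it is just a product of characters), so matching against the characteristic-zero decomposition in Lemma \ref{Multiplicity-Free}(2) pins down the multiplicities.

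I do not anticipate any serious obstacle; the only point that requires care is the assertion that multiplicities in a Weyl filtration are determined by the character. This is a standard fact: it follows because if $M$ has a Weyl filtration with $V_n(\mu)$ appearing $m_\mu$ times, then $\mathrm{char}\, M = \sum_\mu m_\mu \, \mathrm{char}\, V_n(\mu)$, and Weyl characters are the Schur polynomials (padded with zeros), which are linearly independent. With this in hand, the entire argument reduces to the two existence-of-Weyl-filtration facts in Theorem \ref{Donkin} together with Lemma \ref{Multiplicity-Free}, and no additional modular representation theory is required.
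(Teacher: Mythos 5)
Your proposal is correct and follows essentially the same route as the paper's proof: invoke Theorem \ref{Donkin} for existence of a Weyl filtration, then use the characteristic-independence of characters of modules admitting such a filtration to reduce to the characteristic-zero branching and Pieri-type decompositions of Lemma \ref{Multiplicity-Free}. You spell out one step the paper leaves implicit, namely that linear independence of Weyl characters is what converts the character identity into a statement about multiplicities of Weyl factors.
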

\begin{proof}
The modules under consideration have Weyl filtrations by Theorem
\ref{Donkin}.  Therefore, as elements of the integral group algebra
$\Z[X(T_n)]$, the characters $char_{\ff}(R_1(V_{n+1}(\lambda)))$ and
$char_{\ff}(V_n(\lambda)\otimes \ff^n)$ do not depend on the
characteristic of $\ff$.  In characteristic zero we know by the
above lemma that
$$
char_{\C}(R_1(V_{n+1}(\lambda)))=\sum char_{\C}(V_{n}(\mu))
$$
the sum over all $\mu$ such that $ \xymatrix{
 \mu \ar[r] & \lambda}$, and
 $$
 char_{\C}(V_n(\lambda)\otimes \C^n)=\sum char_{\C}(V_n(\mu))
 $$
the sum over all $\mu$ such that $ \xymatrix{
 \lambda \ar[r] & \mu}$.  Therefore, the same formulas hold with $\C$ replaced by $\ff$, and the lemma follows.
\end{proof}

\subsection{Some technical lemmas}
\label{SecA.3}

\begin{lemma}{\label{RepLem1}}
  Let $V_{n+1} \in Rep(G_{n+1})$.  Then the operator
  $x_{n,n+1}\cdot x_{n+1,n}$ acts on the space $R_1\circ R_0(V_{n+1})$ by $1$.
\end{lemma}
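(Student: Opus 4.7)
The plan is to unpack $R_1\circ R_0(V_{n+1})$ as a concrete $T_{n+1}$-weight subspace of $V_{n+1}$, then apply a single $\mathfrak{sl}_2$-style commutator computation in $\g_{n+1}$, closing with a weight-positivity argument.

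First I would observe that, since the $G_1\subset G_{n+1}$ underlying $R_0$ acts by scaling $v_{n+1}$ and the $G_1\subset G_n$ underlying the outer $R_1$ acts by scaling $v_n$, passing to Lie algebras shows that $R_1\circ R_0(V_{n+1})$ is characterized inside $V_{n+1}$ by the conditions $x_{n+1,n+1}\cdot v=0$ and $x_{n,n}\cdot v=v$. Equivalently, it is the sum of $T_{n+1}$-weight spaces whose weights $\mu$ satisfy $\mu_n=1$ and $\mu_{n+1}=0$.

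Second, using the standard bracket relation $[x_{n,n+1},x_{n+1,n}]=x_{n,n}-x_{n+1,n+1}$ in $U(\g_{n+1})$, for any $v$ in this subspace I obtain
\[
x_{n,n+1}\,x_{n+1,n}\,v \;=\; x_{n+1,n}\,x_{n,n+1}\,v \;+\; (x_{n,n}-x_{n+1,n+1})v \;=\; x_{n+1,n}\,x_{n,n+1}\,v + v.
\]
So the lemma reduces to showing that $x_{n,n+1}v=0$.

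This final step is the only nontrivial one and is where the polynomial structure is essential: $x_{n,n+1}$ has $T_{n+1}$-weight $\epsilon_n-\epsilon_{n+1}$, so if nonzero the vector $x_{n,n+1}v$ would lie in the weight space of $\mu+\epsilon_n-\epsilon_{n+1}$, whose $(n{+}1)$-coordinate equals $-1$. In a polynomial representation every $T_{n+1}$-weight lies in $\mathbb{Z}_{\geq 0}^{\,n+1}$, so this weight space vanishes. (The lemma is only ever invoked in the paper with $V_{n+1}$ arising as a component of an object of $\M$, hence polynomial.) Thus $x_{n,n+1}v=0$, forcing $x_{n,n+1}x_{n+1,n}v=v$, as required. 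I expect this weight-vanishing observation to be the real content of the argument; the rest is bookkeeping.
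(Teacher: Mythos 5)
Your argument is correct and is essentially the paper's own proof: the paper also reduces the lemma to showing $x_{n,n+1}\cdot v=0$ --- there phrased as $R_2\circ R_{-1}(V_{n+1})=0$ for a polynomial representation, proved by the same weight-shift computation you invoke via one-parameter subgroups --- and then concludes with $[x_{n,n+1},x_{n+1,n}]=x_{n,n}-x_{n+1,n+1}$, which acts by $1-0=1$ on $R_1\circ R_0(V_{n+1})$. One small caution: in characteristic $p$ the conditions $x_{n,n}\cdot v=v$ and $x_{n+1,n+1}\cdot v=0$ do not \emph{characterize} $R_1\circ R_0(V_{n+1})$, since Lie-algebra eigenvalues only determine torus weights modulo $p$; this is harmless here because you immediately pass to the correct $T_{n+1}$-weight description and only use that, and (like the paper's proof itself) you rightly note that polynomiality of $V_{n+1}$ is what makes the key vanishing work, even though the lemma is stated for rational representations.
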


\begin{proof}
Let $i,j \in \mathbb{Z}$.  First we show that $x_{n,n+1}$ defines an
operator:
\begin{equation}
\label{EqEn} x_{n,n+1}:R_{i} R_j(V_{n+1}) \rightarrow R_{i+1} \circ
R_{j-1}(V_{n+1}).
\end{equation}
Indeed, for $k \in \{1,...,n+1\}$, let $\zeta_{k}:\ff^{\times}
\rightarrow G_{n+1}$ be the one-parameter subgroup
$$
z \mapsto diag(1,...,z,...1),
$$
where $z$ occurs in the $k^{th}$ position.  Now suppose $v \in R_{i}
\circ R_j(V_{n+1})$ and $z \in \ff^{\times}$.  Then,
\begin{align*}
\zeta_{n}(z) \cdot x_{n,n+1} \cdot v &= \zeta_{n}(z) \cdot x_{n,n+1}
\cdot \zeta_{n}(z^{-1}) \cdot \zeta_{n}(z)\cdot v \\ &=
z^{i+1}x_{n,n+1} \cdot v
\end{align*}
and similarly $\zeta_{n+1}(z) \cdot x_{n,n+1} \cdot
v=z^{j-1}x_{n+1,n} \cdot v$.  This proves (\ref{EqEn}).

By (\ref{EqEn}), $x_{n,n+1}:R_{1} \circ R_{0}(V_{n+1}) \rightarrow
R_{2} \circ R_{-1}(V_{n+1})$.  Since $V_{n+1}$ is a polynomial
representation, $R_{2}\circ R_{-1}(V_{n+1})=0$, and therefore
$x_{nn+1} \cdot v =0$ for all $v \in R_{1} \circ R_{0}(V_{n+1})$.
Therefore for $v \in R_{1} \circ R_{0}(V_{n+1})$,
\begin{align*}
x_{n,n+1}x_{n+1,n}.v = (x_{n,n}-x_{n+1,n+1}).v =v.
\end{align*}
\end{proof}

\begin{lemma}{\label{RepLem2}}
Let $V_{n+1}$ be a representation of $G_{n+1}$.  Then the following
identity of operators holds on $R_1 \circ R_1(V_{n+1})$:
\begin{equation*}
1-x_{n+1,n}x_{n,n+1}=s_n^{-1}.
\end{equation*}
\end{lemma}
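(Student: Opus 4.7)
The plan is to use the factorization
$$
s_n^{-1} = \exp(-f)\exp(e)\exp(-f)
$$
inside the copy of $SL_2 \subset G_{n+1}$ at positions $n, n+1$, where $e = x_{n,n+1}$ and $f = x_{n+1,n}$; this is a one-line $2 \times 2$ matrix computation. Since $V_{n+1}$ is polynomial, $e$ and $f$ act locally nilpotently on $V_{n+1}$, so each exponential is a well-defined finite sum of operators.

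Let $v \in R_1 \circ R_1(V_{n+1})$, so $v$ has torus weight $(1,1)$ at positions $n, n+1$ and in particular is annihilated by $h = x_{n,n} - x_{n+1,n+1}$. Polynomiality forces $e^2 v = f^2 v = 0$, since the resulting weights $(3,-1)$ and $(-1,3)$ are not polynomial. From $[e,f] = h$ and $hv = 0$ one obtains immediately $efv = fev$, and applying the bracket formulas $[e^2, f] = eh + he$ and $[e, f^2] = hf + fh$ to $v$ yields
$$
e^2 f v = 2 ev, \qquad f^2 e v = 2 fv.
$$
These three identities are all the $\mathfrak{gl}_2$ input one needs.

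With these truncations in hand, the computation of $s_n^{-1} v$ is a three-step cascade:
\begin{align*}
\exp(-f)\,v &= v - fv, \\
\exp(e)\,(v - fv) &= v - fv - efv, \\
\exp(-f)\,(v - fv - efv) &= v - fev.
\end{align*}
The $k \geq 2$ terms in each exponential contribute only via $e^2 fv$ or $f^2 ev$ (all other higher iterates vanish by polynomiality), and the cancellations follow from the identities above. The final expression $v - fev = (1 - fe)v = (1 - x_{n+1,n} x_{n,n+1})v$ is precisely the claim.

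I expect the only real obstacle to be the bookkeeping in this three-step expansion: roughly a dozen nonzero terms must be grouped by weight and reduced consistently. Each individual simplification is trivial on the $h$-weight-zero subspace; the care lies in making sure the cancellations align. A slicker alternative would be to restrict to the $GL_2$ at positions $n, n+1$ and check the identity on the two polynomial $GL_2$-irreducibles admitting a $(1,1)$-weight space (namely $\det$ and $S^2$, using $p \neq 2$), but this requires a separate argument handling non-semisimple composition structures, so the direct computation seems more efficient.
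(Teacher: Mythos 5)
Your computation is correct and it is a genuinely different route from the paper's. The paper restricts to the $GL_2$ sitting in rows/columns $n,n+1$ and observes that, since $p\neq 2$, every degree-two polynomial representation of $GL_2$ is semisimple; it then decomposes $R_1\circ R_1(V_{n+1})$ inside the isotypic components for $\det$ (highest weight $(1,1)$) and $S^2$ (highest weight $(2,0)$) and verifies the identity on the $(1,1)$-weight line of each irreducible. So the ``separate argument handling non-semisimple composition structures'' that you feared is not needed -- semisimplicity in this degree is exactly the point, and it is why the paper excludes $p=2$. Your argument instead factors the group element, $s_n^{-1}=(I-x_{n+1,n})(I+x_{n,n+1})(I-x_{n+1,n})$, and computes its action directly using only the weight constraints $e^2v=f^2v=0$, $hv=0$, and the commutator identities $efv=fev$, $e^2fv=2ev$, $f^2ev=2fv$; I checked the three-step cascade and the cancellations (including the order-two terms $e^{[2]}(fv)=ev$ and the term $fefv=f^2ev=2fv$ in the last step) and they come out to $v-fev$ as claimed. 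What your approach buys is independence from any classification of $GL_2$-modules: it is a purely weight-theoretic calculation that also makes transparent where $p\neq 2$ enters (division by $2$ in the quadratic term). What the paper's approach buys is brevity: once semisimplicity is invoked, the check reduces to two explicit one-dimensional weight spaces.

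One point in your write-up needs rephrasing rather than repair: in characteristic $p$, local nilpotence of $\rho(e)$ does not by itself let you write the action of the unipotent group element as $\exp(\rho(e))$ (the operators $\rho(e)^k/k!$ need not make sense for $k\geq p$, and in general the group action is given by divided powers in the distribution algebra, not by the naive exponential). The correct justification is that $\rho(I+t\,x_{n,n+1})$ is polynomial in $t$ with coefficients the divided-power operators $e^{[k]}$, that on the vectors occurring in your cascade all terms with $k\geq 3$ vanish by the same polynomiality-of-weights argument that gives $e^2v=0$, and that $e^{[2]}=\rho(e)^2/2$ because $p\neq 2$. With that substitution your computation is complete and agrees with the lemma.
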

\begin{proof}
By similar methods as applied in the proof of the above lemma, it
follows that $x_{n,n+1}^2 \cdot v =0$ for $v \in R_1 \circ
R_1(V_{n+1})$.  Moreover, such $v$ are of weight $(1,1)$ relative to
the torus of $GL_2$.  By the representation theory of $GL_2$, if
$char(\ff)>2$ then, all polynomial representation of $GL_2$ of
degree 2 is semisimple. Hence
$$
R_1 \circ R_1(V_{n+1}) \subset  I^{(1,1)} \oplus I^{(2,0)}
$$
where $I^{(i,j)}$ is the isotypic component of $V_{n+1}|_{G_{n-1}}$
corresponding to  the irreducible representation of $GL_2$ of
highest weight $(i,j)$.

For any  $v\in R_1 \circ R_1(V_{n+1})$,  we decompose v as
$v=v_{1,1}+v_{2,0}$, where $v_{(i,j)}\in I^{(i,j)}$. In particular
$v_{(2,0)}$ lies in the $(1,1)$-weight space of $I^2$.  By
elementary theory of $gl_2$, it is easy to check that for either
$v_{(1,1)} $ or $v_{(2,0)}$,
$$
(1-x_{n+1,n}x_{n,n+1})(v_{(i,j)})=s_n^{-1}(v_{(i,j)}).
$$
\end{proof}

\begin{lemma}
\label{Last Lemma} Let $g:V_{n+1}\to \ff^{n+1}\otimes I_0(W_n)$ be a
morphism of $G_{n+1}$-modules. If $v\in R_1(V_{n+1})$ and set
$g(v)=\sum_{i=1}^{n+1} x_i\otimes g_i$, then
\begin{equation}
\label{Eqtn6} x_{n+1,n+1}\cdot g_i= \left\{
\begin{array}{lr}
g_{i} \text{   if } i=1,...,n  &  \\
0 \text{  if } i =n+1
\end{array}
\right.
\end{equation}
\begin{equation}
\label{Eqtn7} x_{k,n+1}\cdot g_{n+1}=0 \text{  if } k =1,...,n
\end{equation}
\end{lemma}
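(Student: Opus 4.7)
The plan is to deduce both equalities from weight considerations for the one-parameter subgroup $\zeta_{n+1}(z) := \mathrm{diag}(1,\ldots,1,z)\subset G_{n+1}$, whose infinitesimal generator is $x_{n+1,n+1}$. By definition, $v \in R_1(V_{n+1})$ is a weight-$1$ vector for $\zeta_{n+1}$; equivalently $x_{n+1,n+1}\cdot v = v$. Since $g$ is a morphism of $G_{n+1}$-modules it is also $U(\g_{n+1})$-equivariant, so the action of $x_{n+1,n+1}$ on $g(v)$ can be computed in two ways.

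For Equation~(\ref{Eqtn6}), on $\ff^{n+1}$ the basis vector $x_i$ has $\zeta_{n+1}$-weight $\delta_{i,n+1}$. Expanding by the Leibniz rule and using $G_{n+1}$-equivariance,
\[
\sum_{i=1}^{n+1} x_i\otimes g_i \;=\; g(v) \;=\; x_{n+1,n+1}\cdot g(v) \;=\; x_{n+1}\otimes g_{n+1} + \sum_{i=1}^{n+1} x_i\otimes (x_{n+1,n+1}\cdot g_i).
\]
Since $\{x_i\}$ is a basis of $\ff^{n+1}$, matching coefficients of $x_i$ for $i\leq n$ yields $x_{n+1,n+1}\cdot g_i = g_i$, while matching the coefficient of $x_{n+1}$ yields $x_{n+1,n+1}\cdot g_{n+1}=0$.

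For Equation~(\ref{Eqtn7}), the bracket identity $[x_{n+1,n+1},x_{k,n+1}] = -x_{k,n+1}$ for $k\leq n$ tells us that $x_{k,n+1}$ lowers $\zeta_{n+1}$-weight by one. The first part shows $g_{n+1}$ lies in $\zeta_{n+1}$-weight zero, so $x_{k,n+1}\cdot g_{n+1}$ would live in $\zeta_{n+1}$-weight $-1$. However, $I_0(W_n)$ is a polynomial representation of $G_{n+1}$ (noted immediately after Definition~\ref{FunctorP_mn}), so all of its $\zeta_{n+1}$-weights are nonnegative; hence $x_{k,n+1}\cdot g_{n+1}=0$. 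There is no real obstacle here: the argument is pure weight bookkeeping, the only structural input being polynomiality of $I_0(W_n)$, which is precisely what forces the would-be negative-weight component to vanish.
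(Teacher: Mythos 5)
Your computation for (\ref{Eqtn6}) is correct and coincides with the paper's, and your overall strategy (weight bookkeeping for the last one-parameter subgroup together with polynomiality of $I_0(W_n)$) is the intended one. However, your deduction of (\ref{Eqtn7}) has a gap when $p>0$, which is the main case of interest here. Part (\ref{Eqtn6}) only gives the infinitesimal statement $x_{n+1,n+1}\cdot g_{n+1}=0$, i.e. that the $x_{n+1,n+1}$-eigenvalue of $g_{n+1}$ is $0\in\ff$. In characteristic $p$ this does \emph{not} show that $g_{n+1}$ has $\zeta_{n+1}$-weight $0$ in the group sense: any component of $g_{n+1}$ of group weight divisible by $p$ is also killed by $x_{n+1,n+1}$. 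Your final step then breaks down: if $g_{n+1}$ had a component of group weight $p$, the vector $x_{k,n+1}\cdot g_{n+1}$ would sit in group weight $p-1\geq 0$, and polynomiality of $I_0(W_n)$ (which only forces the integral $G_1$-weights to be nonnegative) would not kill it. If instead you insist on reading ``weight'' as the $x_{n+1,n+1}$-eigenvalue throughout, the argument also fails, since that eigenvalue lies in $\ff$, records the group weight only modulo $p$, and carries no notion of nonnegativity. The same conflation appears in your opening claim that being of $\zeta_{n+1}$-weight $1$ is ``equivalent'' to $x_{n+1,n+1}\cdot v=v$; only the implication you actually use is valid for $p>0$.

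The fix is short and lands you exactly on the paper's proof: work with the group action directly. By the definition of $R_1$ one has $\zeta_{n+1}(z)\cdot v=zv$; applying $\zeta_{n+1}(z)$ to $g(v)=\sum_{i=1}^{n+1}x_i\otimes g_i$, using $G_{n+1}$-equivariance of $g$ and comparing coefficients of the basis $\{x_i\}$, gives $\zeta_{n+1}(z)\cdot g_i=z\,g_i$ for $i\leq n$ and $\zeta_{n+1}(z)\cdot g_{n+1}=g_{n+1}$, so $g_{n+1}$ is honestly $G_1$-invariant. Since $\zeta_{n+1}(z)\,x_{k,n+1}\,\zeta_{n+1}(z)^{-1}=z^{-1}x_{k,n+1}$ for $k\leq n$, the element $x_{k,n+1}\cdot g_{n+1}$ has $G_1$-weight $-1$, and polynomiality of $I_0(W_n)$ forces it to vanish, which is (\ref{Eqtn7}). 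This is precisely the computation in the paper, which conjugates by a group element $t\in G_1$ rather than using the Lie-algebra element; differentiating the group-level identities also recovers (\ref{Eqtn6}), so nothing else in your write-up needs to change.
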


\begin{proof}By hypothesis, $x_{n+1,n+1}\cdot v=v$, and since $g$ is a morphism of $G_{n+1}$-modules, $x_{n+1,n+1}\cdot g(v)=f(v)$.  Therefore,
$$
\sum_{i=1}^{n}x_i\otimes x_{n+1,n+1}\cdot g_i + x_{n+1}\otimes
g_{n+1} +x_{n+1}\otimes x_{n+1,n+1}\cdot
g_{n+1}=\sum_{i=1}^{n+1}x_i\otimes g_i,
$$
which implies formula (\ref{Eqtn6}).

To prove the second formula, recall that $G_1 \subset G_{n+1}$ acts
on $I_{0}(W_n) $ semi-simply. For an element $t\in G_1$, we have
$$
t\cdot(x_{k,n+1}\cdot g_{n+1})=tx_{k,n+1}t^{-1}\cdot t\cdot
g_{n+1}=t^{-1}(x_{k,n+1}\cdot g_{n+1}),
$$
and so $x_{k,n+1}\cdot g_{n+1}$ is of weight $-1$ for the action of
$G_1$. But $I_{0}(W_n)$ is a polynomial $G_{n+1}$-representation, so
in particular all the weights of $G_1$ on $I_{0}(W_n)$ are
nonnegative.  Therefore $x_{k,n+1}\cdot g_{n+1}$ has to be zero, for
$k=1,...,n$.
\end{proof}

\section{Polynomial functors}

\subsection{The functor category}
\label{thefunctorcategory}

Recall that $\ff$ is  algebraically closed.  The category of finite
dimensional vector spaces over $\ff$ is denoted $Vect_{\ff}$.  In
\cite{FS97}, Friedlander and Suslin introduce the category of
\emph{strict polynomial functors of finite degree}. Their category,
whose objects consists of certain endofunctors of $Vect_{\ff}$, will
be denoted by $\PP$.

  For $V,W \in Vect_{\ff}$, \emph{polynomial maps} from $V$ to $W$ are by definition elements of $S(V^{*})\otimes W$, where $S(V^*)$ denotes the symmetric algebra of the linear dual of $V$.  Elements of $S^{d}(V^{*})\otimes W$ are said to be of \emph{degree} $d$.
\begin{definition}
  The objects of the category $\PP$ are functors $T:Vect_{\ff} \rightarrow Vect_{\ff}$ that satisfy the following properties:
\begin{enumerate}
\item for any $V,W \in Vect_{\ff}$, the map of vector spaces
$$
Hom_{\ff}(V,W) \rightarrow Hom_{\ff}(T(V),T(W))
$$
is polynomial, and
\item the degree of the map
$$
End_{\ff}(V) \rightarrow End_{\ff}(T(V))
$$
is bounded uniformly for all $V \in Vect_{\ff}$.
\end{enumerate}
The morphisms in $\PP$ are natural transformations of functors.
\end{definition}

\subsection{The canonical equivalence}
\label{RemP}

We now show that the categories $\M$ and $\PP$ are canonically
equivalent.

Let $T \in \PP$.  By functoriality $T(\ff^n)$ carries an algebraic
action of $G_n$.  The representation $T(\ff^n)$  is polynomial
(Proposition 3.8, \cite{F01}). There exists a canonical functor
$\Phi:\PP \to \M$ defined as follows:
$$\Phi(T)=(T(\ff^n),\alpha_n)_{n=0}^{\infty},$$ where $\alpha_n:
R_0(T(\ff^{n+1}))\to T(\ff^n)$ is the map induced from the natural
$G_n$-equivariant projection $\pi_n:\ff^{n+1}\to \ff^n$,
\begin{equation*}
\xymatrix{ T(\ff^{n+1}) \ar[r]^{T(\pi_n)} &  T(\ff^n) \\
R_0(T(\ff^{n+1})) \ar@{^{(}->}[u] \ar[ur]_{\alpha_n} }
\end{equation*}
We need to show that $\alpha_n$ is an isomorphism for any $n$, and
thus $(T(\ff^n),\alpha_n)$ is a well-defined object in $\M$.

Let $\Gamma^k \in \M$ be the $k$-th divided power of vector spaces,
i.e. $\Gamma^k(V)=(\otimes^kV)^{S_k}$, and let $\Gamma^{k,n}$ be the
polynomial functor $\Gamma^k\circ Hom_\ff(\ff^n,\cdot)$.  Note that
the action of $G_n$ on $\ff^n$ induces an action on
$Hom_\ff(\ff^n,\cdot)$ and hence on $\Gamma^{k,n}$.

\begin{lemma}
The map $\alpha_n: R_0(T(\ff^{n+1}))\to T(\ff^n)$ is an isomorphism,
and thus the assignment $T \mapsto
(T(\ff^n),\alpha_n)_{n=0}^{\infty}$ defines a functor $\Phi:\PP \to
\M$.  \end{lemma}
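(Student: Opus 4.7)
The goal is to prove that $\alpha_n \colon R_0(T(\ff^{n+1})) \to T(\ff^n)$ is an isomorphism; once this is done, functoriality of $\Phi$ will follow formally from the naturality of morphisms in $\PP$. My strategy is to exhibit an explicit two-sided inverse. Let $\iota_n \colon \ff^n \hookrightarrow \ff^{n+1}$ be the standard inclusion; I claim that $T(\iota_n)$, corestricted to $R_0(T(\ff^{n+1}))$, is inverse to $\alpha_n$.

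First I would check that the image of $T(\iota_n)$ lies in $R_0(T(\ff^{n+1}))$. The copy of $G_1 \subset G_{n+1}$ fixes the subspace $\iota_n(\ff^n) \subset \ff^{n+1}$ pointwise, so $\iota_n$ is $G_1$-equivariant with $G_1$ acting trivially on $\ff^n$; applying $T$ and using functoriality gives $T(g) \circ T(\iota_n) = T(\iota_n)$ for every $g \in G_1$, so the image of $T(\iota_n)$ lies in the $G_1$-invariants. The identity $\alpha_n \circ T(\iota_n) = \mathrm{id}_{T(\ff^n)}$ then follows immediately from $\pi_n \circ \iota_n = \mathrm{id}_{\ff^n}$ and functoriality of $T$.

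The crucial step is the reverse composition $T(\iota_n) \circ \alpha_n = \mathrm{id}_{R_0(T(\ff^{n+1}))}$, and this is where the \emph{strict} polynomial hypothesis on $T$ is essential. I would proceed as follows. Consider the one-parameter family $\zeta(t) = \mathrm{diag}(1,\dots,1,t) \in \mathrm{End}_\ff(\ff^{n+1})$, which lies in $G_{n+1}$ for $t \in \ff^\times$ and specializes to the idempotent $e := \iota_n \circ \pi_n$ at $t = 0$. By definition of $\PP$, the induced map $\mathrm{End}_\ff(\ff^{n+1}) \to \mathrm{End}_\ff(T(\ff^{n+1}))$ is polynomial, so $t \mapsto T(\zeta(t))$ is a polynomial family of operators on $T(\ff^{n+1})$. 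Decomposing $T(\ff^{n+1}) = \bigoplus_{k \geq 0} T(\ff^{n+1})(k)$ into $G_1$-weight spaces, $T(\zeta(t))$ acts by the scalar $t^k$ on the weight-$k$ summand for $t \in \ff^\times$; since both sides are polynomial in $t$, the identity extends to $t = 0$. Hence $T(e) = T(\iota_n) \circ T(\pi_n)$ acts as the identity on $R_0(T(\ff^{n+1})) = T(\ff^{n+1})(0)$ and as zero on higher weight spaces, yielding the desired identity on $R_0(T(\ff^{n+1}))$.

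Functoriality of $\Phi$ is then formal. A natural transformation $\eta \colon T \to S$ in $\PP$ has components $\eta(\ff^n)$ that are $G_n$-equivariant by naturality applied to the action of $G_n \subset \mathrm{End}_\ff(\ff^n)$, and naturality with respect to $\pi_n$ produces the square relating $\eta(\ff^{n+1})$ and $\eta(\ff^n)$ through the gluing maps $\alpha_n$ (for $T$) and $\beta_n$ (for $S$), so $(\eta(\ff^n))_{n \geq 0}$ defines a morphism in $\M$. The main obstacle is the specialization argument at $t = 0$: without the strict polynomial structure there is no way to evaluate $T$ at the non-invertible idempotent $e$, and the identification of $R_0(T(\ff^{n+1}))$ with the image of the polynomial projector $T(e)$ would not be available.
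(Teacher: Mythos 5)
Your proof is correct, but it takes a genuinely different route from the paper. The paper's proof is representation-theoretic via Friedlander--Suslin: assuming $T$ has degree $k$, it identifies $T(\ff^n)\cong Hom_{\PP}(\Gamma^{k,n},T)$ as $G_n$-modules (Theorem 2.10 of [FS]), decomposes $\Gamma^{k,n}$ into tensor products of divided powers $\Gamma^{k_1}\otimes\cdots\otimes\Gamma^{k_n}$, and uses the fact that each such summand represents the corresponding weight space (Corollary 2.12 of [FS]); taking $G_1$-invariants then amounts to keeping exactly the summands with $k_{n+1}=0$, which reassemble into $\Gamma^{k,n-1}$, giving the isomorphism. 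You instead build an explicit two-sided inverse: $T(\iota_n)$ lands in the $G_1$-invariants by equivariance, the composite $\alpha_n\circ T(\iota_n)$ is the identity since $\pi_n\circ\iota_n=\mathrm{id}$, and for the reverse composite you evaluate $T$ at the idempotent $e=\iota_n\circ\pi_n$ by viewing it as the $t=0$ specialization of the polynomial family $t\mapsto T(\mathrm{diag}(1,\dots,1,t))$, which acts by $t^k$ on the $G_1$-weight-$k$ space for $t\in\ff^\times$ and hence (since $\ff$ is infinite) by $0^k$ at $t=0$, i.e. $T(e)$ is precisely the projection onto $R_0(T(\ff^{n+1}))$. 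Your argument is more elementary and self-contained, using only the definition of strict polynomiality, and it correctly isolates where that hypothesis enters (evaluation at a non-invertible map); the paper's argument buys the canonical identification of all $G_1$-weight spaces at once via representing objects, machinery it reuses immediately afterwards to prove that $\Phi$ is an equivalence. The only point you pass over, as does the paper's proof of this lemma, is that the uniform degree bound in the definition of $\PP$ is what makes $(T(\ff^n),\alpha_n)$ a compact object of $\widetilde{\M}$, i.e. an object of $\M$; this is immediate but is part of the "defines a functor $\Phi:\PP\to\M$" claim.
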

\begin{proof}
We can assume $T$ is of degree $k$.  The $G_n$ action on the functor
$\Gamma^{k,n}$ induces a representation of $G_n$ on the vector
space $Hom_{\PP}(\Gamma^{k,n},T)$. By Theorem 2.10 in \cite{FS97},
$T(\ff^n)$ is canonically isomorphic to $Hom_{\PP}(\Gamma^{k,n},T)$
as $G_n$-modules.

Thus we need to check that $R_0(Hom_{\PP}(\Gamma^{k,n},T))\simeq
Hom_{\PP}(\Gamma^{k,n-1},T)$. The functor $\Gamma^{k,n}$ can be
decomposed canonically as
$$\Gamma^{k,n}=\bigoplus_{k_1+k_2\cdots +k_n=k} \Gamma^{k_1}\otimes\cdots \otimes \Gamma^{k_n}.$$
By Corollary 2.12 in \cite{FS97}, $\Gamma^{k_1}\otimes\cdots \otimes
\Gamma^{k_n}$ exactly represents the weight space of $T(\ff^n)$ with
weight $(k_1,k_2,...,k_n)$. In other words,
$$
Hom_{\PP}(\Gamma^{k_1}\otimes\cdots \otimes \Gamma^{k_n},T) \simeq
T(\ff^n)^{k_1,...,k_n}
$$
where $T(\ff^n)^{k_1,...,k_n}$ is the weight space corresponding to
the character $(k_1,...,k_n)$. Hence
\begin{eqnarray*}
R_0(Hom_{\PP}(\Gamma^{k,n},T)) &\simeq& \bigoplus_{k_1+k_2\cdots
+k_{n-1}=k} Hom_{\PP}(\Gamma^{k_1}\otimes\cdots \otimes
\Gamma^{k_{n-1}},T) \\ &\simeq& Hom_{\PP}(\Gamma^{k,n-1},T).
\end{eqnarray*}

\end{proof}

\begin{proposition}
The functor $\Phi: \PP \to \M$ is an equivalence.
\end{proposition}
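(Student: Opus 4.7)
The plan is to reduce the statement to a single fixed-degree piece and then compare two known equivalences via a commutative triangle of evaluation functors. First, I would observe that $\Phi$ respects the degree decompositions: a functor $T \in \PP$ homogeneous of degree $k$ sends $\ff^n$ to a polynomial $G_n$-representation of degree $k$ (this is already essentially the content of the previous lemma), so $\Phi$ restricts to $\Phi(k) \colon \PP(k) \to \M(k)$ for every $k \geq 0$. Since both $\PP = \bigoplus_k \PP(k)$ and $\M = \bigoplus_k \M(k)$ decompose as direct sums, it suffices to prove that $\Phi(k)$ is an equivalence for each $k$.

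Next, I would set up the evaluation-at-$\ff^n$ functor $\mathrm{ev}_n \colon \PP(k) \to \M_n(k)$ and verify the commutative triangle
$$
\xymatrix{
\PP(k) \ar[r]^{\Phi(k)} \ar[dr]_{\mathrm{ev}_n} & \M(k) \ar[d]^{\Psi_n(k)} \\
& \M_n(k)
}
$$
which holds by the very construction $\Phi(T) = (T(\ff^n), \alpha_n)_{n}$, since projection to the $n$-th entry followed by the forgetful functor is evaluation.  For $n \geq k$, Proposition \ref{Forgetful-Functor} tells us that $\Psi_n(k)$ is an equivalence of categories.

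The key external input is the Friedlander--Suslin theorem: for $n \geq k$, evaluation at $\ff^n$ induces an equivalence $\mathrm{ev}_n \colon \PP(k) \to \M_n(k)$.  One direction (essential surjectivity) is classical: any polynomial $G_n$-representation of degree $k$ decomposes into Schur functors applied to the standard module, which are the evaluations of explicit strict polynomial functors.  Full faithfulness amounts to the statement, contained in Theorem 2.10 and Corollary 2.12 of \cite{FS97} (already invoked in the preceding lemma), that $\mathrm{Hom}_{\PP}(\Gamma^{k,n}, T) \cong T(\ff^n)$ as $G_n$-modules together with the fact that $\Gamma^{k,n}$ is a projective generator of $\PP(k)$ when $n \geq k$.

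Finally, since $\mathrm{ev}_n = \Psi_n(k) \circ \Phi(k)$, with $\mathrm{ev}_n$ and $\Psi_n(k)$ both equivalences for $n \geq k$, it follows formally that $\Phi(k)$ is an equivalence (pick any inverse $\Psi_n(k)^{-1}$ and the composite $\mathrm{ev}_n^{-1} \circ \Psi_n(k)$ is a quasi-inverse to $\Phi(k)$, up to natural isomorphism). Summing over $k$ gives the equivalence $\Phi \colon \PP \to \M$. The main obstacle is not in the formal diagram chase but in invoking the Friedlander--Suslin equivalence cleanly; once that is cited, everything else is a two-out-of-three argument within the degree-$k$ block.
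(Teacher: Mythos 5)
Your argument is correct and is essentially the paper's own proof: decompose by degree, observe that evaluation at $\ff^n$ factors as $\Psi_n(k)\circ\Phi$, and then use that both the Friedlander--Suslin evaluation functor (Lemma 3.4 of \cite{FS97}) and $\Psi_n(k)$ (Proposition \ref{Forgetful-Functor}) are equivalences for $n\geq k$, so $\Phi$ restricted to each degree piece is an equivalence by two-out-of-three. The only difference is cosmetic: the paper simply cites \cite{FS97} for the evaluation equivalence rather than sketching its proof via $\Gamma^{k,n}$.
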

\begin{proof}
Let $\PP(k) $ be the category of strict polynomial functors of
degree $k$. By Lemma 2.6 in \cite{FS97},
$\PP=\bigoplus_{d=0}^{\infty} \PP(k)$. Recall that
$\M=\bigoplus_{n=0}^{\infty} \M(k)$.  It is clear that $\Phi$
preserves the degree, i.e. $\Phi(\PP(k)) \subset  \M(k)$.

Let  $\Phi_n(k): \PP(k) \to \M_n(k)$ be the functor mapping $T$ to
$T(\ff^n)$.  Obviously $\Phi_n(k)=\Psi_n(k) \circ \Phi$, i.e. the
following diagram commutes,
$$
\xymatrix{ \PP(k) \ar[r]^\Phi \ar[d]_{\Phi_n(k)}&  \M(k)\ar[dl]^{\Psi_n(k)} \\
\M_n(k) }
$$
To prove $\Phi:\PP\to \M$ is an equivalence, it is enough to prove
that $\Phi:\PP(k) \to \M(k)$ is an equivalence for every $k\geq0$.
To show this, for each $k$, simply choose some $n\geq k$.  Then by
Lemma 3.4 in \cite{FS97} and Proposition \ref{Forgetful-Functor},
$\Phi_n(k)$ and $\Psi_{n}(k)$ are both equivalences. It follows that
$\Phi:\PP(k) \to \M(k)$ is also.
\end{proof}


\begin{thebibliography}{99}
\bibitem[BFK]{BFK} Bernstein, J., Frenkel, I., Khovanov, M. A categorification of the Temperley-Lieb algebra and Schur quptients of $U(sl_2)$ via projective and Zuckerman functors, Selecta Math. (5) (1999), 199-241.
\bibitem[BK1]{BK2}Brundan, J.; Kleshchev, A. Modular Littlewood-Richardson coefficients. Math. Z. 232 (1999), no. 2, 287-320.

\bibitem[BK2]{BK} Brundan, J.; Kleshchev, A. On translation functors for general linear and symmetric groups. Proc. London Math. Soc. (3) 80 (2000), no. 1, 75-106.


\bibitem[CR]{CR} Chuang, J.; Rouquier, Derived equivalences for symmetric groups and $sl_2$-categorification. Ann. of Math. (2) 167 (2008), no. 1, 245-298.

\bibitem[F]{F01} Friedlander, E.M., Lectures on the cohomology of finite group schemes. http://www.math.northwestern.edu/$\sim$eric/lectures/nantes/nantes-final.pdf

\bibitem[D]{D} Donkin, Stephen On Schur algebras and related algebras. II. J. Algebra 111 (1987), no. 2, 354-364.


%\bibitem[Do85]{D} Donkin, S. Rational representations of algebraic groups. %Tensor products and filtration. Lecture Notes in Mathematics, 1140. Springer-Verlag, %Berlin, 1985.

\bibitem[FS]{FS97}Friedlander, E.M.; Suslin, A. Cohomology of finite group schemes over a field. Invent. Math. 127 (1997), no. 2, 209-270.

\bibitem[G]{G} Green, J. A. Polynomial representations of ${\rm GL}_{n}$. Second corrected and augmented edition. With an appendix on Schensted correspondence and Littelmann paths by K. Erdmann, Green and M. Schocker. Lecture Notes in Mathematics, 830. Springer, Berlin, 2007.

\bibitem[GW]{GW} Goodman, R.; Wallach, N.R. Symmetry, representations, and invariants. Graduate Texts in Mathematics, 255. Springer, Dordrecht, 2009.

\bibitem[HTY]{HTY} Hong, J. ; Touz\'e, A.;  Yacobi, O.; Polynomial functors and categorification of Fock space, {\tiny www.math.toronto.edu/oyacobi/wallachpaper.pdf}.

\bibitem[HY]{HY} Hong, J., Yacobi, O. Categorifications and Schur-Weyl duality, \emph{in preparation}.

\bibitem[J]{J} Jantzen, J. C. Representations of algebraic groups. Second edition. Mathematical Surveys and Monographs, 107. American Mathematical Society, Providence, RI, 2003.

\bibitem[K]{Ka} Kac, V. Infinite-dimensional Lie algebras. Third edition. Cambridge University Press, Cambridge, 1990.

\bibitem[Kl]{Kl05} Kleshchev, A. Linear and projective representations of symmetric groups. Cambridge Tracts in Mathematics, 163. Cambridge University Press, Cambridge, 2005.

%\bibitem[Ku98]{Ku98}Kuhn, N. Rational cohomology and cohomological stability %in generic representation theory. Amer. J. Math. 120 (1998), no. 6, 1317-1341.

\bibitem[LLT]{LLT} Lascoux, A.; Leclerc, B.; Thibon, J. Hecke algebras at roots of unity and crystal bases of quantum affine algebras. Comm. Math. Phys. 181 (1996), no. 1, 205-263.

\bibitem[M]{Mac} Macdonald, I. G. Symmetric functions and Hall polynomials. Second edition. With contributions by A. Zelevinsky. Oxford Mathematical Monographs. Oxford Science Publications. The Clarendon Press, Oxford University Press, New York, 1995.

\bibitem[Mar]{M93} Martin, S. Schur algebras and representation theory.  Cambridge Tracts in Mathematics, 112. Cambridge University Press, Cambridge, 1993.

%\bibitem[Ma90]{M}Mathieu, O. Filtrations of $G$-modules. Ann. Sci. \'{E}cole %Norm. Sup. (4) 23 (1990), no. 4, 625-644.

\bibitem[MM]{MM} Misra, K.C, Miwa, T.: Crystal base of the basic representation of $U_{q}(\hat{\mathfrak{sl}}_n)$. Commun. Math. Phys 134, 79-88 (1990)

\bibitem[Ro]{R} Rouquier, R., 2-Kac-Moody algebras, preprint,  arXiv:0812.5023 [math.RT]

\bibitem[Sh]{PS} Shan, P., Crystals of Fock spaces and cyclotomic rational double affine Hecke algebras, preprint,  arXiv:0811.4549 [math.RT]

\bibitem[SW]{SW} Stroppel, C, Webster, B., Quiver Schur algebras and $q$-Fock space, preprint, arXiv:1110.1115 [math.RA]

%\bibitem[TV10]{TV10}Touze, A.; van der Kallen, W. Bifunctor cohomology and %cohomological finite generation for reductive groups. Duke Math. J. 151 %(2010), no. 2, 251-278.
\end{thebibliography}
\end{document}